\documentclass{article}

\usepackage[margin = 0.9in]{geometry}

\setcounter{tocdepth}{2}

\usepackage{titlesec} 

\newcommand{\appendixsection}[1]{%
  \refstepcounter{section}%
  \section*{Appendix \Alph{section}. #1}%
  \addcontentsline{toc}{section}{Appendix \Alph{section}. #1}%
}

\usepackage{tikz}
\usepackage{tikz-cd}
\usetikzlibrary{cd, arrows.meta, calc}
\usepackage{url, hyperref}
\usepackage{float}

\tikzset{->-/.style={decoration={
  markings,
  mark=at position .45 with {\arrow{>}}},postaction={decorate}}}
\usepackage{amsmath}
\usepackage{stmaryrd}
\usepackage{amssymb}
\usepackage{enumitem}
\usepackage{graphicx}
\usepackage{mathdots}
\usepackage{color}
\usepackage{diagbox}
\usepackage{array, makecell}
\usepackage{rotating}
\usepackage{amsthm}
\usepackage{dsfont}
\usepackage{adjustbox}
\usetikzlibrary{arrows}
\usepackage{mathtools}

\newcommand{\rigid}{\mathbin{%
  \vcenter{\hbox{%
    \tikz[line cap=round, line join=round, baseline={(current bounding box.center)}]{
      \def\h{0.9ex}    
      \def\dx{0.6ex}   
      \def\gap{0.5ex}  
      \def\lw{0.5pt}   

      \draw[line width=\lw] (0,-\h) -- (\dx,\h);
      \draw[line width=\lw] (\gap,-\h) -- ({\gap+\dx},\h);

      \draw[line width=\lw] (\dx,\h) -- ({\gap+\dx},\h);
      \draw[line width=\lw] (0,-\h) -- (\gap,-\h);
    }
  }}%
}}

\theoremstyle{definition}
\newtheorem{definition}{Definition}[section]

\newtheorem{remark}[definition]{Remark}
\newtheorem{notation}[definition]{Notation}
\newtheorem{setup}[definition]{Setup}

\theoremstyle{theorem}
\newtheorem{theorem}[definition]{Theorem}

\newtheorem{proposition}[definition]{Proposition}
\newtheorem{corollary}[definition]{Corollary}
\newtheorem{lemma}[definition]{Lemma}

\def\Z{\mathbb{Z}}

\def\R{\mathbb{R}}

\def\P{\mathbb{P}}

\def\R{\mathcal{R}}
\def\CH{\mathrm{CH}}
\def\Sym{\mathrm{Sym}}
\def\RH{\mathcal{RH}}
\def\Sym{\mathrm{Sym}}
\def\det{\mathrm{det}}
\def\Gm{\mathbb{G}_m}
\def\GL{\mathrm{GL}}
\def\GI{(\Gm \times \Gm) \rtimes \mu_2}

\def\PGL{\mathrm{PGL}}

\def\cO{\mathcal{O}}
\def\uD{\underline{\Delta}}
\def\Pic{\mathrm{Pic}}
\def\cH{\mathcal{H}}
\def\cD{\mathcal{D}}
\def\cX{\mathcal{X}}
\def\cY{\mathcal{Y}}
\def\cZ{\mathcal{Z}}
\def\cL{\mathcal{L}}

\def\cM{\mathcal{M}}
\def\cP{\mathcal{P}}

\def\wRH{\widetilde{\mathcal{RH}}}
\def\Aut{\mathrm{Aut}}
\def\cN{\mathcal{N}}
\def\cE{\mathcal{E}}

\title{The Integral Chow Rings of the Moduli Stacks of Hyperelliptic Prym Pairs III}
\author{Alessio Cela and Alberto Landi}
\date{\vspace{-5ex}}

\begin{document}

\maketitle

\begin{abstract}
This paper is the third and final part of a series devoted to the description of the integral Chow rings of the moduli stacks of hyperelliptic Prym pairs. For a fixed genus $g$, there are two natural stacks, $\RH_g$ and $\wRH_g$, parametrizing hyperelliptic Prym pairs, with the former being the $\mu_2$-rigidification of the latter. Both decompose as the disjoint union of $\lfloor (g+1)/2 \rfloor$ components, denoted $\mathcal{RH}_g^n$ and $\wRH_g^n$ for $n = 1, \ldots, \lfloor (g+1)/2 \rfloor$. In this paper we present quotient stack descriptions of the components $\mathcal{RH}_g^n$ for even $g$ and compute their integral Chow rings, thereby completing the computation for all irreducible components of $\RH_g$. In addition, we give quotient stack presentations for all irreducible components of $\widetilde{\mathcal{RH}}_g$ and determine when the rigidification map $\widetilde{\mathcal{RH}}_g^n \to \RH_g^n$ is a root gerbe.  We then use this to compute the Chow rings of $\wRH_g^n$ for all $g$ and $n$, with the sole exception of the case where $g$ is odd and $n=(g+1)/2$.

Finally, in the appendix, we discuss $G$-gerbes induced by an homomorphism of abelian groups $H \to G$ and an $H$-gerbe.
\end{abstract}

\tableofcontents

\section{Introduction}

A genus $g$ Prym pair is a pair $(C,\eta)$, where $C$ is a smooth, connected curve of genus $g$ 
and $\eta$ is a $2$-torsion line bundle on $C$. 
Their moduli stacks have been intensively studied over the past decades  \cite{Mumford, Farkas, Bea, Beabis, DonagiI} due to their several interactions  with other moduli stacks of interest, namely curves, abelian varieties, line bundles on curves and double covers of curves. 
More precisely, we have a commutative diagram

\begin{equation}\label{eqn: diagram intro}
\begin{tikzcd}[row sep=small, column sep=large]
 & \widetilde{\R}_g = \mathsf{Hurw}_{g' \xrightarrow{2:1} g} 
     \arrow[dl, "\mathsf{Prym}" above] \arrow[d, "r_g"] \arrow[rr] & & \widetilde{\mathcal{J}}^0_{g} \arrow[d] \\
 \mathcal{A}_{g-1} & \R_g := \widetilde{\R}_g\! \rigid\!\!\! \mu_2 \arrow[dr] \arrow[rr] & & \mathcal{J}^0_g=  \widetilde{\mathcal{J}}^0_{g}\! \rigid\!\!\! \Gm\arrow[dl] \\
 & & \cM_g &
\end{tikzcd}
\end{equation}
where 
\begin{enumerate}
    \item[$\bullet$] $\widetilde{\R}_g$ denotes the (non-rigidified) moduli stack of Prym pairs, which is also naturally identified with the stack $\mathsf{Hurw}_{g' \xrightarrow{2:1} g}$ of unramified degree $2$ covers 
     of a genus $g$ curve by a connected curve of genus $g' = 2g - 1$.
     \item[$\bullet$] The map $r_g$ is the rigidification map, obtained by killing the automorphism of the pair $(C,\eta)$ 
          corresponding to multiplication by $-1$ on $\eta$.
    \item[$\bullet$] The map $\mathsf{Prym}$ is the Prym map to the moduli stack 
          $\mathcal{A}_{g-1}$ of principally polarized abelian varieties of dimension $g-1$.
    \item[$\bullet$] The map to $\cM_g$ simply remembers the genus $g$ curve $C$.
    \item[$\bullet$] $\widetilde{\mathcal{J}}^0_{g}$ is the universal Picard stack parametrizing degree $0$ line bundles on genus $g$ curves. This is a $\Gm$-gerbe over its rigidification $\mathcal{J}^0_g$. See also~\cite[Proposition 11]{CIL24}.
\end{enumerate}

In this paper, we examine the behavior of the stacks $\wRH_g$ and $\RH_g$, which are the restrictions of 
$\widetilde{\R}_g$ and $\R_g$ to the hyperelliptic locus. 
These stacks naturally decompose into $\lfloor (g+1)/2 \rfloor$ irreducible components, 
which we denote by $\wRH_g^n$ and $\RH_g^n$ for $n = 1, \dots, \lfloor (g+1)/2 \rfloor$. 
A first question we address is: for each $n$, when does the induced map
\[
r_g^n : \wRH_g^n \longrightarrow \RH_g^n,
\]
obtained by restricting the rigidification map $r_g$, define a root gerbe?

This article is the final part of a trilogy of papers devoted to the computation of the Chow rings of the stacks $\RH_g^n$ 
\cite{CLI, CLII}, building on the initial computation of $\R_2$ in \cite{CIL24}. 
Our focus here is on the case of even genus $g$, in which case we compute the integral Chow rings of all components $\RH_g^n$. We then turn to the non-rigidified stacks $\wRH_g^n$, providing explicit presentations and computing their integral Chow rings for all $g$ and $n$, with the sole exception of the case $g$ is odd and $n = (g+1)/2$ (where we still provide presentations).

The study of the Chow rings of moduli stacks of stable curves of a fixed genus was initiated by Mumford \cite{Mumford}. 
For rational coefficients, these rings are known up to genus $9$ \cite{Faber, Izadi, PenevVakil, SamHannah}. The rational Chow rings of the stacks of (nodal) curves embedded in $\P^2$ of a fixed degree is also known \cite{CKY}.
By contrast, their integral Chow rings remain far less understood. 
Edidin and Graham introduced a theory of intersection for global quotient stacks with integer coefficients \cite{EG98}, 
providing a more refined invariant. As expected, integral computations are considerably more challenging than their rational counterparts. 
For instance, the integral Chow group of $\mathcal{M}_2$ was determined in \cite{Vis98}; those of the Deligne–Mumford compactification $\overline{\mathcal{M}}_2$ were only very recently computed independently in \cite{Lar19, DLVis-M2}; and the Chow groups of $\overline{\mathcal{M}}_3$ have now been described after inverting $6$ \cite{Per23}.

By comparison, the integral structure of the Chow rings of Prym moduli stacks had been much less explored prior to this series of papers, although their rational Picard groups were already known~\cite{Putman}. An interesting direction from here would be to study the left-hand side of diagram~\eqref{eqn: diagram intro},  namely the map $\mathsf{Prym}$ and the induced maps on Chow rings.  We expect that our results will be useful in this context.

\subsection{Main (formal) definitions}

We begin by briefly recalling the definition of Prym curves and their associated moduli stacks.
We refer to~\cite[Section 2.1]{CIL24} and~\cite[Section 1.1]{CLI} for more details.  

A \emph{Prym curve} of genus $g$ over a scheme $S$ is a triple $(C/S, \eta, \beta)$ where  
\begin{itemize}
    \item $C \to S$ is a smooth proper curve of genus $g$ with geometrically connected fibers;  
    \item $\eta \in \mathrm{Pic}(C)$ is a line bundle that restricts to a nontrivial element on each geometric fiber of $C \to S$;  
    \item $\beta: \eta^{\otimes 2} \xrightarrow{\sim} \mathcal{O}_C$ is an isomorphism of line bundles.  
\end{itemize}

The moduli stack $\mathcal{R}_g$ is obtained as the stackification of the prestack whose objects over $S$ are genus $g$ Prym curves $(C \to S, \eta, \beta)$.  
Morphisms will play a central role in this paper: a morphism $(C \to S, \eta, \beta)$ to $(C' \to S', \eta', \beta')$ consists of a cartesian diagram
\begin{equation}\label{eqn: morphism prestack alt}
    \begin{tikzcd}
    C \arrow{r}{\varphi} \arrow{d} & C' \arrow{d} \\
    S \arrow{r}{f} & S'
    \end{tikzcd}
\end{equation}
such that there exists an isomorphism $\tau: \varphi^* \eta' \to \eta$ for which the diagram
\begin{equation}\label{eqn: diagram of sheaves alt}
    \begin{tikzcd}
    \varphi^*(\eta'^{\otimes 2}) \arrow{r}{\tau^{\otimes 2}} \arrow{d}[swap]{\varphi^*(\beta')} & \eta^{\otimes 2} \arrow{d}{\beta} \\
    \varphi^* \mathcal{O}_{C'} \arrow{r} & \mathcal{O}_C
    \end{tikzcd}
\end{equation}
commutes.  

The isomorphism $\tau$ is not part of the data in $\mathcal{R}_g$; keeping track of it instead yields a $\mu_2$-gerbe over $\mathcal{R}_g$, denoted $\widetilde{\mathcal{R}}_g$ in \cite{CIL24}.  

The stacks of \emph{hyperelliptic Prym pairs}, denoted $\mathcal{RH}_g$ and 
$\widetilde{\mathcal{RH}}_g$, are defined as the fiber product of the hyperelliptic 
locus $\mathcal{H}_g \subseteq \mathcal{M}_g$ with $\mathcal{R}_g$ and 
$\widetilde{\mathcal{R}}_g$, respectively, along the natural forgetful morphisms to 
$\mathcal{M}_g$.  The stack $\RH_g$ (and similarly $\wRH_g$) decomposes into open and closed substacks
\begin{equation}\label{eqn: decomposition RH alt2}
\mathcal{R}\mathcal{H}_g = \bigsqcup_{1 \leq n \leq \lfloor\tfrac{g+1}{2}\rfloor} \mathcal{R}\mathcal{H}_g^n,
\end{equation}
which can be viewed as the stack-theoretic analogue of the set-theoretic partition for a genus $g$ curve $C$:
\[
    \mathrm{Pic}^0(C)[2] \setminus \{ \mathcal{O}_C \} 
    = \bigsqcup_{1 \leq n \leq \lfloor\tfrac{g+1}{2}\rfloor} B_n,
\]
where $B_n \subseteq \mathrm{Pic}^0(C)[2] \setminus \{ \mathcal{O}_C \}$ denotes the subset of line bundles expressible as $(n \cdot g^1_2) \otimes \cO_C(-e)$, with $e$ an effective divisor of degree $2n$ consisting of $2n$ distinct Weierstrass points. Here $g^1_2$ denotes the unique $g^1_2$ on $C$.

\subsection{Statement of the main results}

\subsubsection{The Chow ring of $\RH_g^n$ for $g$ even}

Let $g \geq 2$ be an even genus. Our first result is a presentation for the stacks $\RH_g^n$ for all $n=1,\ldots,g/2$. To state them we require some notation.

\begin{notation}
    We denote by $\chi$ the standard $\Gm$ representation and by $V$ the standard $\GL_2$ representation. Furthermore, we let $ t=c_1(\chi)$ be the first Chern class of $\chi$ in $\CH^*(B\Gm)$ and $c_i= c_i(V)$ be the Chern classes of $V$ in $\CH^*(B \GL_2)$. We will also use the same notation for their pullbacks to other stacks.

    For each $j \geq 1$, we denote by $W_j$ the $(2j+1)$-dimensional representation of $\PGL_2$ whose underlying vector space is the space of homogeneous polynomials of degree $2j$ in two variables. The action is given by $
    [B] \cdot f(X,Y) = \det(B)^j \, (f \circ B^{-1})(X,Y)$ for $[B] \in \PGL_2$ and $f$ a homogeneous polynomial of degree $2j$.
\end{notation}

\begin{theorem}\label{thm: pres RH}
   Suppose $g \geq 2$ is even. Then for every $n=1,\ldots,g/2$, we have an isomorphism of stacks
   $$
   \RH_g^n \cong \Bigg[  \frac{ ( \chi^{-1} \otimes \det(V)^{\otimes n} \otimes \Sym^{2n}(V^\vee) )   \times ( \chi \otimes \det(V)^{g-n} \otimes \Sym^{2g+2-2n}(V^\vee)) \smallsetminus \Delta}{ \Gm \times \GL_2}  \Bigg]
   $$
   where $\Delta $ is the locus of pairs of polynomials $(f,g)$ such that $fg$ is singular.
\end{theorem}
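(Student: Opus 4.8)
The plan is to realize $\RH_g^n$ as a refinement of the standard quotient presentation of the hyperelliptic locus, the extra datum being a \emph{factorization} of the branch form. First I would recall the presentation of $\cH_g\subseteq\cM_g$ for even $g$ (following Arsie--Vistoli and the earlier parts \cite{CLI,CLII}): writing $P=\Pp(V)$ for the universal conic over $B\GL_2$, a smooth hyperelliptic curve is the double cover $\{w^2=F\}$ inside the total space of $M=\cO_P(g+1)\otimes\det(V)^{g/2}$, where $F\in H^0(P,M^{\otimes2})=\Sym^{2g+2}(V^\vee)\otimes\det(V)^{g}$ is a binary form with distinct roots. Here the hypothesis that $g$ is even is already used, since it makes $M$ an honest line bundle; one checks that $-I\in\GL_2$ acts trivially and realizes the hyperelliptic involution, so that $\cH_g\cong[(\det(V)^{g}\otimes\Sym^{2g+2}(V^\vee)\smallsetminus\Delta_{\mathrm{sing}})/\GL_2]$ with generic stabilizer $\mu_2$.

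Next I would translate the type-$n$ condition into this language. By the description of $B_n$ recalled around \eqref{eqn: decomposition RH alt2}, a $2$-torsion bundle $\eta$ of type $n$ is equivalent to a choice of $2n$ of the $2g+2$ branch points — the images of the Weierstrass points in the support of $e$ — and, because $g$ is even, $2n\ne 2g+2-2n$, so these $2n$ points are distinguished from the rest by their number and no unordered ambiguity arises (this is exactly the phenomenon that fails, and forces the excluded case, when $g$ is odd and $n=(g+1)/2$). A choice of $2n$ of the roots of $F$ is the same as a factorization $F=f\cdot g$ with $\deg f=2n$ and $\deg g=2g+2-2n$, well defined up to the simultaneous rescaling $(f,g)\mapsto(\lambda f,\lambda^{-1}g)$, $\lambda\in\Gm$. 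This is the source of the extra factor $\Gm$: assigning $f$ the character $\chi^{-1}$ and $g$ the character $\chi$ records precisely this rescaling, while the $\det(V)$-twists are pinned by requiring the product to reproduce the branch form, $\chi^{-1}\det(V)^{n}\cdot\chi\det(V)^{g-n}=\det(V)^{g}$, together with the normalization of $\eta$ below; a direct computation fixes the individual exponents $n$ and $g-n$. As a consistency check, $\dim\bigl(\Sym^{2n}(V^\vee)\oplus\Sym^{2g+2-2n}(V^\vee)\bigr)=2g+4$, so the quotient has dimension $2g+4-\dim(\Gm\times\GL_2)=2g-1=\dim\cH_g$, and the generic stabilizer of a pair $(f,g)$ is again exactly $\mu_2=\{(1,\pm I)\}$.

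I would then construct the comparison morphism and verify it is an isomorphism. On the universal double cover $\mathcal{C}\to[U/(\Gm\times\GL_2)]$ (with $U$ the complement of $\Delta$) the universal first factor $f$ has relative zero locus a divisor on $P$ whose ramification $\mathcal{R}_f\subset\mathcal{C}$ has relative degree $2n$; the line bundle $\eta:=\cO_{\mathcal{C}}(\mathcal{R}_f)\otimes h^{*}\cO_P(-n)$ restricts on each fibre to $\cO_C(e-n\,g^1_2)$, the required class. The key subtlety is that, as an equivariant bundle, $\eta^{\otimes2}\cong h^{*}L$ with $L$ the line bundle on $B(\Gm\times\GL_2)$ attached to the character $\chi^{-1}\det(V)^{n}$, which is \emph{not} trivial; hence $\eta$, equipped with an isomorphism $\beta:\eta^{\otimes2}\xrightarrow{\sim}\cO_{\mathcal{C}}$, does not descend to the full quotient. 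This is not an accident but the heart of the matter: the character $\chi^{-1}\det(V)^{n}$ is not a square, and what the construction produces canonically is $\eta$ only \emph{up to sign}, i.e. up to the automorphism that has been rigidified away in $\RH_g^n$. Since the target is the $\mu_2$-rigidification, this is exactly the datum needed, and $(f,g)\mapsto(C,\eta,\beta)$ yields a well-defined morphism $[U/(\Gm\times\GL_2)]\to\RH_g^n$. (The genuine bundle $\eta$ instead lives over the square-root cover, which is why $\wRH_g^n\to\RH_g^n$ can be a root gerbe — the subject of the later sections.) For the inverse I would start from a family $(C/S,\eta,\beta)\in\RH_g^n(S)$, recover the $g^1_2$ and the Brauer--Severi quotient $C\to\Pp(W)$, use $\eta^{-1}\otimes(n\,g^1_2)=\cO_C(e)$ to produce the distinguished degree-$2n$ divisor on $\Pp(W)$ and hence the factor $f$ up to scale, and trivialize $W\cong V$ by passing to the associated $\GL_2$-torsor; this returns an $S$-point of $[U/(\Gm\times\GL_2)]$, and one checks the two constructions are mutually quasi-inverse on objects and on morphisms.

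Finally, the locus to be removed is identified directly: the double cover $\{w^2=fg\}$ is smooth precisely when $fg$ has distinct roots, i.e. away from $\Delta=\{(f,g): fg\text{ singular}\}$, which absorbs the cases where $f$ or $g$ has a repeated root and where $f$ and $g$ share a root, matching the statement. I expect the main obstacle to be the third paragraph: carrying out the equivariant bookkeeping of $(\eta,\beta)$ over an arbitrary base and proving rigorously that the non-descending character $\chi^{-1}\det(V)^{n}$ is \emph{exactly} the ambiguity killed by the $\mu_2$-rigidification — so that the construction descends to $\RH_g^n$ and the comparison morphism is an equivalence of stacks, not merely a bijection on geometric points.
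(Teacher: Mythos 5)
Your proposal is correct in substance, but it takes a genuinely different route from the paper. The paper's proof is purely formal: it quotes from the earlier parts of the series (\cite{CLI}, Equation 4; \cite{CLII}, Lemma 1.1) the cartesian diagram identifying $\RH_g^n$ with the fiber product of $\cH_g$ and $\cD_{2n,2g+2-2n}$ over $\cD_{2g+2}$, substitutes the known quotient presentations of these three stacks, and concludes using the fiber square \eqref{diagram: square of groups}, which exhibits $B(\Gm\times\GL_2)$ as $B\GL_2\times_{B(\Gm\times\PGL_2)}B(\Gm^{2}\times\PGL_2)$; all moduli-theoretic content is outsourced to the citations. You instead rebuild that content from scratch: the dictionary between type-$n$ two-torsion bundles and degree-$2n$ subdivisors of the branch divisor (equivalently, factorizations $F=fg$ up to inverse rescaling, with no complementation ambiguity since $g$ is even), the explicit comparison morphism, its inverse, and the stabilizer check. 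Your bookkeeping is consistent with the paper: the obstruction character $\chi^{-1}\det(V)^{n}$ you identify has first Chern class $nc_1-t$, which is precisely the class $c_1(N)$ appearing in the paper's second proof of part (i) of Proposition~\ref{prop: root stacks}, so your argument also recovers the even-$g$ root-gerbe statement as a byproduct. The step you rightly flag as the main obstacle --- that only the Picard class of $\eta$, not the pair $(\eta,\beta)$, descends to the quotient, and that this is exactly what a map to the $\mu_2$-rigidification requires --- can be closed without any gerbe-cocycle bookkeeping: $\RH_g\to\cH_g$ is representable, with fiber over a family $C\to T$ the algebraic space $\Pic^0_{C/T}[2]\smallsetminus\{0\}$, so a morphism $[U/(\Gm\times\GL_2)]\to\RH_g^n$ over $\cH_g$ is just a section of this space for the family $\{w^2=fg\}$, and the class of $\cO_{\mathcal{C}}(\mathcal{R}_f)\otimes h^{*}\cO_{P}(-n)$ is such a section because its square is pulled back from the base. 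One further simplification: you need not worry about ``pinning the individual exponents'' $n$ and $g-n$, since precomposing the action with the automorphism $(\lambda,A)\mapsto(\lambda\det(A)^{k},A)$ of $\Gm\times\GL_2$ shows that all twists $(\det(V)^{a},\det(V)^{g-a})$ with $\Gm$-weights $(-1,+1)$ yield isomorphic quotient stacks. In short, the paper's route buys brevity by leaning on the earlier papers; yours buys self-containedness and explicit geometry at the cost of the descent argument, which is fixable as above.
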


The proof is presented in~\S\ref{sec: pres for g even}. In particular, we can express $\RH_g^n$ is a $\Gm^{\times 2}$-torsor 
$$
\RH_g^n \to \bigg[  \frac{\P(\Sym^{2n}(V^\vee)) \times \P(\Sym^{2g+2-2n}(V^\vee)) \smallsetminus \uD }{\Gm \times \GL_2} \bigg]
$$
where $\uD$ denotes again the locus of pairs of polynomials $(f,g)$ (up to scalar) with $fg$ singular. 

Together with \cite[Proposition~3.5]{CLII}, this reduces the computation of $\CH^*(\RH_g^n)$ to that of $\CH^*_{\GL_2}(\P(\Sym^{2n}(V^\vee)) \times \P(\Sym^{2g+2-2n}(V^\vee)) \smallsetminus \uD )$.

\begin{theorem}\label{thm: chow P x P}
    For $a,b \geq 1$, and if the characteristic of the base field is 0 or greater than $\max(2a,2b)$, we have
    $$
    \CH^*_{\GL_2}(({\P(\Sym^{2a}(V^\vee)) \times \P(\Sym^{2b}(V^\vee))\setminus\uD})=\frac{\Z[\xi_{2a},\xi_{2b},c_1,c_2]}{J}
    $$
    where $\xi_{2j}=c_1^{\GL_2}(\cO_{\P(\Sym^{2j}(V^{\vee})}(1))$ and $J$ is the ideal generated by the following relations:
    \begin{itemize}
        \item $2(2a-1)\xi_{2a}-2a(2a-1)c_1$,
        \item $\xi_{2a}^2-c_1\xi_{2a}-4a(a-1)c_2$,
        \item $2(2b-1)\xi_{2b}-2b(2b-1)c_1$,
        \item $\xi_{2b}^2-c_1\xi_{2b}-4b(b-1)c_2$,
        \item $2b\xi_{2a}+2a\xi_{2b}-4abc_1$,
        \item $\xi_{2a}\xi_{2b}-4abc_2$,
        \item $2ab(2a-1)(2b-1)(4c_2-c_1^2)$.
    \end{itemize}
\end{theorem}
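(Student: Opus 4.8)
The plan is to compute $\CH^*_{\GL_2}(\P(\Sym^{2a}(V^\vee))\times\P(\Sym^{2b}(V^\vee))\setminus\uD)$ through the excision (localization) sequence, peeling off the discriminant $\uD$ one irreducible divisorial component at a time. Writing $X=\P(\Sym^{2a}(V^\vee))\times\P(\Sym^{2b}(V^\vee))$, the iterated projective bundle formula gives $\CH^*_{\GL_2}(X)=\Z[c_1,c_2][\xi_{2a},\xi_{2b}]/(P_{2a},P_{2b})$, where $P_{2a},P_{2b}$ are the Chern polynomials of $\Sym^{2a}(V^\vee)$ and $\Sym^{2b}(V^\vee)$; since these sit in degree $\geq 3$ they do not affect the seven relations, which live in degrees $1$ and $2$. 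Geometrically $\uD=D_f\cup D_g\cup R$, where $D_f=\{f\text{ singular}\}$, $D_g=\{g\text{ singular}\}$, and $R=\{\mathrm{Res}(f,g)=0\}$ is the locus where $f$ and $g$ share a root, so the complement is exactly the coprime squarefree pairs. Each piece is an irreducible divisor whose class I read off from the weight of the corresponding invariant: the discriminant of a degree-$d$ binary form is bihomogeneous of degree $2(d-1)$ and transforms by $\det^{d(d-1)}$ under $\GL_2$, giving $[D_f]=2(2a-1)\xi_{2a}-2a(2a-1)c_1$ and the analogue $[D_g]$, while the resultant has bidegree $(2b,2a)$ and $\det$-weight $4ab$, giving $[R]=2b\xi_{2a}+2a\xi_{2b}-4abc_1$. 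These are the three linear relations.

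Next I would remove $D_f$ and $D_g$ to reduce to $U:=(\P(\Sym^{2a}(V^\vee))\setminus\Delta_a)\times(\P(\Sym^{2b}(V^\vee))\setminus\Delta_b)$. For this I use the standard resolution of the discriminant of a single binary form: the proper, generically injective map $\nu_a\colon \P(V)\times\P(\Sym^{2a-2}(V^\vee))\to\Delta_a$, $(\ell,h)\mapsto\ell^2 h$, whose source is a product of projective bundles. Since $\nu_a^*\xi_{2a}=2\zeta_a+\eta_a$ (with $\zeta_a,\eta_a$ the two hyperplane classes), the equivariant Chow ring of the source is generated over $\CH^*_{\GL_2}(X)$ by $1$ and $\zeta_a$ modulo $\zeta_a^2=c_1\zeta_a-c_2$, so by the projection formula the image of the Gysin map is the ideal $((\nu_a)_*1,(\nu_a)_*\zeta_a)$. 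The first generator is $[D_f]$ and the second, after reduction, is $\xi_{2a}^2-c_1\xi_{2a}-4a(a-1)c_2$; this is the single-form computation already present in the earlier parts of the series, which I would cite. The same argument in the $g$-factor produces the two analogous relations, so after this step $\CH^*_{\GL_2}(U)$ is the quotient of $\Z[c_1,c_2][\xi_{2a},\xi_{2b}]$ by the four relations obtained so far.

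The heart of the proof is the final excision of $R\cap U$ from $U$. I would resolve it by the incidence variety $\tilde R=\{(f,g,p)\in U\times\P(V):f(p)=g(p)=0\}$, which fibers over $\P(V)$ with fibers open in a product of projective bundles of the cofactor forms $f/\ell_p,g/\ell_p$. Writing $h=c_1(\cO_{\P(V)}(1))$ for the class of the common root, one has $\xi_{2a}|_{\tilde R}=h+\xi'_{2a-1}$ and $\xi_{2b}|_{\tilde R}=h+\xi'_{2b-1}$, so $\CH^*_{\GL_2}(\tilde R)$ is generated over $\CH^*_{\GL_2}(U)$ by $1$ and $h$ modulo $h^2=c_1h-c_2$, and the image of the Gysin map $i_*$ is the ideal $(i_*1,i_*h)$. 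The first generator is $[R]$, the last linear relation; computing $i_*h$ produces the codimension-two relation $\xi_{2a}\xi_{2b}-4abc_2$. Finally, combining this with $[R]$ and the discriminant relations—multiplying $[R]$ by $\xi_{2a}$, substituting $\xi_{2a}^2=c_1\xi_{2a}+4a(a-1)c_2$ and $\xi_{2a}\xi_{2b}=4abc_2$, and then applying $[D_f]$—forces the torsion relation involving $4c_2-c_1^2=-(r_1-r_2)^2$, the square of the difference of the Chern roots of $V$. This is the class detecting the $2$-torsion of $B\PGL_2$, and it is exactly where the integral (as opposed to rational) structure becomes visible.

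The main obstacle is the integral bookkeeping, concentrated in two places. First, the resolutions $\nu_a$ and $\tilde R$ are only generically injective: over the codimension-one sublocus of $\Delta_a$ (resp. $R$) where the form acquires two double roots (resp. where $f,g$ share two roots) they become finite of degree $2$, and this locus is codimension one inside the divisor, hence contributes precisely in the codimension-two range of the relations. I would control it either by computing the Segre class of the resolution directly, or by stratifying each divisor into smooth locally closed pieces and checking that the extra strata contribute only classes already in the ideal generated by the listed relations; this is the step most likely to hide sign and coefficient errors. Second, pinning down the exact integer coefficient in $i_*h$, and thus the precise factor $2ab(2a-1)(2b-1)$ in the torsion relation $2ab(2a-1)(2b-1)(4c_2-c_1^2)=0$, requires an honest equivariant pushforward rather than a purely formal manipulation; this is cleanest via restriction to the maximal torus of $\GL_2$ and self-intersection localization on $\P(V)$, where $h$ specializes to the two Chern roots. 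Once these pushforwards are determined, surjectivity of the composite Gysin maps in the relevant degrees shows that the seven relations generate the full relation ideal, completing the identification.
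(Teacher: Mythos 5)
Your outline follows the same skeleton as the paper's proof in \S\ref{subsubsec: Chow P x P}: excision, the decomposition of $\uD$ into the three invariant components, resolution of each component by squaring/multiplication maps, and the projection formula to reduce to finitely many equivariant pushforwards; your weight computations for the three divisor classes agree with $F_{1*}(1)$, $G_{1*}(1)$, $M_{1*}(1)$ of Lemmas~\ref{lem: image F and G} and~\ref{lem: image M1}. The genuine gap is the step you yourself flag as ``the main obstacle'' and then defer: you need the image of the equivariant pushforward from each component of $\uD$ to equal the ideal generated by the pushforwards from your degree-one resolutions $\nu_a$, $\nu_b$, $\tilde R$. Those maps are not Chow envelopes: over the locus of forms with $r\geq 2$ double roots (resp.\ pairs sharing $r\geq 2$ roots) they are finite of degree $r$, and no subvariety of your sources maps birationally onto such strata, so integral surjectivity of pushforward is exactly what is in question (knowing $2\alpha\in I$ does not give $\alpha\in I$, even in a torsion-free ring, and $\CH^*_{\GL_2}(\P^{2a}\times\P^{2b})$ is torsion-free). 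The paper settles precisely this point with machinery your proposal has no counterpart for: the full families $F_r,G_r,M_r$ for \emph{all} $r$ form genuine Chow envelopes (Lemma~\ref{lem: Chow envelopes n>1 GL2 case} --- also the only place the characteristic hypothesis enters, which you never invoke); an induction showing the image of $M_*$ is generated by $M_{r*}(1)$, $r\leq\min(2a,2b)$, and $M_{1*}(\xi_1)$ (Lemma~\ref{lem: generators M}); and the key fact that $M_{r*}(1)\in J_1$ for $r\geq 2$, proved by pulling back the $\PGL_2$-equivariant computation of the first paper in the series along $B\GL_2\to B\PGL_2$ (Lemma~\ref{lem: ideal = candidate ideal}). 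Your two proposed substitutes (``compute the Segre class'' or ``stratify and check the extra strata contribute nothing new'') are restatements of the problem, not solutions; in particular your incidence variety $\tilde R$ records only a single common root, so it cannot even see the class of the locus where $f$ and $g$ share a quadratic factor, which is where the paper's seventh relation ($M_{2*}(1)$) comes from.

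Two smaller points. First, the stated presentation also requires the projective-bundle polynomials $p_{2a}(\xi_{2a}),p_{2b}(\xi_{2b})$ (monic of degrees $2a+1,2b+1$) to lie in the ideal generated by the listed relations; your remark that they ``sit in degree $\geq 3$'' so ``do not affect the seven relations'' has the implication backwards --- what must be shown is that they are \emph{implied} by the relations, which is Lemma~\ref{lem: superfluous polynomials} in the paper. Second, your treatment of the torsion relation is internally inconsistent (you first say the formal manipulation forces it, then that the coefficient needs an honest pushforward), but the formal derivation itself is correct: writing $R_1,R_3,R_6$ for the first, third and sixth listed relations, one checks directly that
\[
b(2b-1)c_1\cdot R_1+(2a-1)\xi_{2a}\cdot R_3-2(2a-1)(2b-1)\,R_6 \;=\; 2ab(2a-1)(2b-1)(4c_2-c_1^2),
\]
so the seventh relation is in fact redundant (this appears to contradict the final sentence of Remark~\ref{rmk: simplification M2(1)}, though it does not affect the validity of the stated presentation). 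This cuts against you rather than for you: producing the seventh relation was never the difficulty; the difficulty is proving that \emph{no relations beyond the listed ones} arise from the deeper strata of $\uD$, which is exactly the pushforward-surjectivity statement your proposal leaves open.
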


The proof is presented in~\S\ref{sec: Chow RHg even}. As explained above, from this one immediately obtains:

\begin{theorem}\label{thm: Chow RH g even}
    Suppose $g \geq 2$ is even. Then for every $n=1,\ldots,g/2$, the integral Chow ring of $\RH_g^n$ is given by
    $$
    \CH^*(\RH_g^n)= \frac{\Z[t,c_1,c_2]}{I}
    $$
    where $I$ is the ideal generated by the following relations:
    \begin{itemize}
        \item $2(2n-1)t$,
        \item $t^2-(2n-1)c_1t+n(n-1)c_1^2-4n(n-1)c_2$,
        \item $4gt-2(2g+1-2n)c_1$,
        \item $t^2+(2g-1-2n)c_1t+(g-n)(g-n-1)c_1^2-4(g+1-n)(g-n)c_2$,
        \item $2gt+2nc_1$,
        \item $t^2-(2n-g)c_1t-n(g-n)c_1^2+4n(g+1-n)c_2$,
        \item $2n(2n-1)(g+1-n)(2g+1-2n)(4c_2-c_1^2)$.
    \end{itemize}
\end{theorem}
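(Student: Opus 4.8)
By Theorem~\ref{thm: pres RH} and the remark following it, $\RH_g^n$ is a $\Gm^{\times 2}$-torsor
$$\RH_g^n \longrightarrow \left[ \frac{\P(\Sym^{2n}(V^\vee)) \times \P(\Sym^{2g+2-2n}(V^\vee)) \smallsetminus \uD}{\Gm \times \GL_2} \right],$$
whose two $\Gm$-factors are the tautological line bundles of the two projective factors, where $L_1 = \chi^{-1} \otimes \det(V)^{n} \otimes \Sym^{2n}(V^\vee)$ and $L_2 = \chi \otimes \det(V)^{g-n} \otimes \Sym^{2g+2-2n}(V^\vee)$. The plan is to compute the Chow ring of the base and then descend through the torsor. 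Since $\Gm$ acts through $\chi$ by scaling each $L_i$, it acts trivially after projectivizing, so the base is $\left[ \,\cdot\, / \GL_2 \right] \times B\Gm$ and its Chow ring is $\CH^*_{\GL_2}(\P(\Sym^{2n}(V^\vee)) \times \P(\Sym^{2g+2-2n}(V^\vee)) \smallsetminus \uD)[t]$. Applying Theorem~\ref{thm: chow P x P} with $a = n$ and $b = g+1-n$ identifies this with $\Z[\xi_{2n}, \xi_{2g+2-2n}, c_1, c_2, t]/J$.

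Next I would pass from the base to the torsor via \cite[Proposition~3.5]{CLII}, which amounts to quotienting by the first Chern classes of the two tautological line bundles; since an ideal generator and its negative agree, signs are immaterial. The one genuine computation is to express these classes in the coordinates above. Using $\cO_{\P(E \otimes M)}(1) = \cO_{\P(E)}(1) \otimes M^{-1}$ together with $c_1(\det(V)) = c_1$ and $c_1(\chi) = t$, one obtains
$$c_1(\cO_{\P(L_1)}(1)) = \xi_{2n} + t - n c_1, \qquad c_1(\cO_{\P(L_2)}(1)) = \xi_{2g+2-2n} - t - (g-n) c_1.$$
Adjoining these two elements to $J$ and eliminating the now-linear variables $\xi_{2n}$ and $\xi_{2g+2-2n}$ yields the substitution $\xi_{2n} = n c_1 - t$ and $\xi_{2g+2-2n} = t + (g-n) c_1$, so that $\CH^*(\RH_g^n) = \Z[t, c_1, c_2]/I$ with $I$ the image of $J$.

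Finally I would carry out this substitution on the seven generators of $J$ and match the outcome to the stated relations. This is routine polynomial algebra, and five of the seven generators map, up to an overall sign, directly onto the listed relations. The only remaining point is that the images of the third and fifth generators are $2(2g+1-2n)(t - c_1)$ and $(4n - 2g - 2)t - 2n c_1$, which differ from the stated relations $4gt - 2(2g+1-2n)c_1$ and $2gt + 2n c_1$ by a multiple of $2(2n-1)t$; since $2(2n-1)t$ is itself (up to sign) the image of the first generator of $J$, the two generating sets define the same ideal $I$. The main---and essentially only---obstacle is this bookkeeping: correctly tracking the $\chi$- and $\det(V)$-twists in the two Chern classes, and then reconciling the resulting presentation with the stated one modulo the torsion relation $2(2n-1)t$. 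Once these are in hand, the theorem follows.
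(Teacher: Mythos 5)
Your proposal is correct and takes essentially the same route as the paper's own proof: both use Theorem~\ref{thm: pres RH} to exhibit $\RH_g^n$ as a $\Gm^{\times 2}$-torsor over $B\Gm \times [(\P(\Sym^{2n}(V^\vee)) \times \P(\Sym^{2g+2-2n}(V^\vee)) \setminus \uD)/\GL_2]$, apply the $\Gm$-bundle formula twice to kill the first Chern classes of the two twisted tautological line bundles (your classes $\xi_{2n}+t-nc_1$ and $\xi_{2g+2-2n}-t-(g-n)c_1$ are the negatives of the paper's $-\xi_{2n}-t+nc_1$ and $-\xi_{2g+2-2n}+t+(g-n)c_1$, which is immaterial for the ideal), and then reconcile the substituted generators with the stated ones modulo the torsion relation $2(2n-1)t$. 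The paper compresses this last step into the phrase ``some simplifications using the first relation''; your explicit bookkeeping of the third and fifth generators is exactly that simplification.
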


Our proof of the previous theorem applies for $n=1$ as well, thus giving an alternative proof of~\cite[Theorem 1.10]{CLI}, after the substitutions $\beta_i=c_i$ and $\gamma=t$. Specializing instead to $g=2$, we also recover~\cite[Theorem 4]{CIL24}, after the substitution $\lambda_i= (-1)^i c_i$ and $\gamma=t$.

In section \S\ref{sec: interpretation generators g even} we interpret the above generators $t,c_1,c_2$ as Chern classes of natural vector bundles on the stack $\RH_g^n$. 

\subsubsection{Non-rigidified Prym Pairs}

Fix $g \geq 2$ and $n \in \{1,\ldots,(g+1)/2\}$. In this section we provide presentations of the non-rigidified stacks $\wRH_g^n$ and determine when the rigidification map 
\begin{equation}\label{eqn: rigidification map}
    r_g^n \colon \wRH_g^n \longrightarrow \RH_g^n
\end{equation}
is a $\mu_2$-root gerbe.
Our main geometric input is the next proposition, whose statement requires some preliminary facts and notation.

Recall that $\cH_g$ denotes the moduli stack of hyperelliptic curves of genus $g$. For $g \geq 2$, this coincides with the stack parametrizing double covers $C \to \P^1$ by connected genus $g$ curves. This last description also makes sense when $g=0$ or $g=1$, and in these cases we still write $\cH_0$ and $\cH_1$. These stacks already appeared in~\cite{AV04}, where they are denoted by $\cH_{\mathrm{sm}}(1,2,g+1)$.

For $a \geq 1$, let $\mathcal{D}_a$ be the moduli stack parametrizing pairs 
$
(\mathcal{P} \to S, D_a \subseteq \mathcal{P}),
$
where $\mathcal{P} \to S$ is a Brauer–Severi scheme of relative dimension $1$, and $D_a$ is a Cartier divisor on $\mathcal{P}$ that is finite \'etale of degree $a$ over $S$. Note that $\cD_{2a} \subset [\P(W_{a})/\PGL_2]$ is the open substack obtained by removing the locus of singular polynomials. There is a natural morphism 
$
\mathcal{H}_g \longrightarrow \mathcal{D}_{2g+2},
$
which sends a genus-$g$ hyperelliptic curve $C$ to the branch divisor of its unique (up to the $\mathrm{PGL}_2$-action) double cover $C \to \mathbb{P}^1$.

For $a,b \geq 1$, define $\mathcal{D}_{a,b}$ as the open substack of 
$
\mathcal{D}_a \times_{B\mathrm{PGL}_2} \mathcal{D}_b
$
parametrizing triples $(\mathcal{P} \to S, D_a, D_b)$, where $D_a$ and $D_b$ are disjoint Cartier divisors on $\mathcal{P}$, finite \'etale of degrees $a$ and $b$, respectively, over $S$. There is a canonical map $\cD_{a,b} \to \cD_{a+b}$ taking the sum of the divisors.

\begin{proposition}\label{prop: starting point}
For every $g \geq 2$ and $n \in \{1,\ldots,(g+1)/2\}$, there is an isomorphism of stacks:
\begin{itemize}
    \item if $n < (g+1)/2$, then
    \[
    \wRH_g^n \;\cong\; (\cH_{n-1} \times_{B \PGL_2} \cH_{g-n}) \setminus \Delta,
    \]
    \item if $g$ is odd and $n = (g+1)/2$, then
    \[
    \wRH_g^n \;\cong\; \left[\frac{(\cH_{\frac{g-1}{2}} \times_{B \PGL_2} \cH_{\frac{g-1}{2}}) \setminus \Delta}{\mu_2}\right],
    \]
    where $\mu_2$ exchanges the two factors $\cH_{\frac{g-1}{2}}$.
\end{itemize}
Here the fiber product is taken with respect to the natural maps to $B \PGL_2$, and $\Delta$ denotes the preimage of the locus of singular polynomials under the composition
\[
\cH_{n-1} \times_{B \PGL_2} \cH_{g-n}
   \;\longrightarrow\; \cD_{2n,\,2g+2-2n}
   \;\longrightarrow\; \cD_{2g+2}
   \;\hookrightarrow\; [\P(W_{g+1})/\PGL_2].
\]
\end{proposition}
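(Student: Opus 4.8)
The plan is to unwind the definition of $\wRH_g^n$ as a fiber product and reinterpret a nonrigidified hyperelliptic Prym pair purely in terms of branch-divisor data on a Brauer--Severi curve. A point of $\wRH_g^n$ over $S$ is a hyperelliptic Prym curve $(C/S,\eta,\beta,\tau)$ lying in the $n$-th component, meaning $\eta \cong (n\cdot g^1_2)\otimes \cO_C(-e)$ for an effective divisor $e$ of degree $2n$ supported on distinct Weierstrass points. The first step is to make this structure canonical: I would show that the data of such an $\eta$ (together with the isomorphism $\beta$ trivializing $\eta^{\otimes 2}$) is equivalent to a partition of the $2g+2$ branch points of the double cover $C\to \mathcal{P}$ into two disjoint \'etale subdivisors $D_{2n}$ and $D_{2g+2-2n}$, of degrees $2n$ and $2g+2-2n$. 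Concretely, the Weierstrass points are the ramification points, their images form the branch divisor $D_{2g+2}\subset \mathcal{P}$, and choosing the degree-$2n$ subdivisor $e$ amounts to choosing an ordered splitting $D_{2g+2}=D_{2n}\sqcup D_{2g+2-2n}$. The hyperelliptic cover $C\to \mathcal{P}$ is recovered from $\mathcal{P}\to S$ and $D_{2g+2}$, while the splitting encodes $\eta$.

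The second step is to promote this set-theoretic bijection to an equivalence of stacks. Here the key is that $\cH_{n-1}$ and $\cH_{g-n}$ are, by the cited description, the stacks of double covers branched along \'etale divisors of degrees $2n$ and $2g+2-2n$ on a Brauer--Severi curve; more precisely the map $\cH_a\to \cD_{2a+2}$ records the branch divisor, and the fiber product $\cH_{n-1}\times_{B\PGL_2}\cH_{g-n}$ parametrizes a common $\mathcal{P}\to S$ with \emph{two} such divisors. I would check that removing the locus where the two divisors meet (the $\Delta$ described via the map to $\cD_{2n,2g+2-2n}\to \cD_{2g+2}\hookrightarrow[\P(W_{g+1})/\PGL_2]$, i.e.\ the singular-polynomial locus of the product $fg$) exactly corresponds to requiring $e$ to be supported on distinct Weierstrass points, so that the union $D_{2n}\sqcup D_{2g+2-2n}=D_{2g+2}$ is reduced and the resulting cover is smooth of genus $g$. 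The nonrigidified structure, namely the isomorphism $\tau:\varphi^*\eta'\to\eta$, is precisely what makes $\eta$ (hence the ordered splitting) rigid rather than defined up to sign; this is why $\wRH_g^n$, and not $\RH_g^n$, is what appears, and I would verify the $2$-torsion trivialization $\beta$ is canonically determined by the splitting.

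The third step handles the symmetric boundary case $n=(g+1)/2$ (so $g$ odd), where the two divisors have equal degree $2n=g+1=2g+2-2n$. Here there is an extra automorphism: swapping $D_{2n}$ and $D_{2g+2-2n}$ sends $\eta=(n\cdot g^1_2)\otimes\cO_C(-e)$ to its "complementary" square root, and since $2n=g+1$ these two choices yield isomorphic Prym data up to the swap. I would argue that the natural $\mu_2$ exchanging the two identical factors $\cH_{(g-1)/2}$ acts freely away from $\Delta$ and that $\wRH_g^n$ is the quotient, accounting for the fact that one cannot canonically distinguish $D_{2n}$ from its complement.

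The main obstacle I anticipate is the rigorous matching of gerbe and automorphism structures rather than the coarse geometry: one must track the $\mu_2$ coming from the $\pm 1$ automorphism of $\eta$, the automorphisms of the hyperelliptic cover $C\to\mathcal{P}$ (the hyperelliptic involution), and the $\PGL_2$-automorphisms of $\mathcal{P}$, and confirm that the automorphism groups on both sides agree as sheaves of groups over $S$. In particular, verifying that the \emph{nonrigidified} stack $\wRH_g^n$ matches the honest fiber product (with no residual gerbe), while the rigidified $\RH_g^n$ would differ by a $\mu_2$-gerbe, is the delicate point that the subsequent discussion of root gerbes is designed to exploit; getting the bookkeeping of these $\mu_2$'s exactly right, especially their compatibility with $\beta$ and $\tau$, is where the real care is needed.
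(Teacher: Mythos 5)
Your overall strategy --- translating a hyperelliptic Prym pair into branch-divisor data on the Brauer--Severi curve --- is a natural starting point, but the central claim of your first step is false, and the failure is precisely the content of the proposition. The data of $(C,\eta,\beta)$ is indeed equivalent to a hyperelliptic cover $C \to \cP$ together with an ordered splitting $D_{2g+2} = D_{2n} \sqcup D_{2g+2-2n}$ of its branch divisor; but this identifies the \emph{rigidified} stack, $\RH_g^n \cong \cH_g \times_{\cD_{2g+2}} \cD_{2n,\,2g+2-2n}$ (exactly the cartesian diagram used in the proof of Theorem~\ref{thm: pres RH}), not $\wRH_g^n$. One can see this on automorphism groups: over a generic geometric point, an object of $\wRH_g^n$ has automorphism group of order $4$ (the hyperelliptic involution together with the sign $\pm 1$ of $\tau$), whereas ``curve plus splitting'' has automorphism group of order $2$, because the sign of $\tau$ acts trivially on $\eta$ viewed as a line bundle, hence trivially on the splitting. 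So keeping track of $\tau$ does not make the splitting ``rigid rather than defined up to sign'' (the splitting is already canonically determined by $\eta$ alone); it adds a $\mu_2$-gerbe that splitting data cannot see. Relatedly, your assertion that ``the hyperelliptic cover $C\to\cP$ is recovered from $\cP \to S$ and $D_{2g+2}$'' is not true stack-theoretically: $\cH_g \to \cD_{2g+2}$ is a nontrivial $\mu_2$-root gerbe, not an isomorphism, and likewise $\cH_{n-1}\times_{B\PGL_2}\cH_{g-n}$ is \emph{not} $\cD_{2n,\,2g+2-2n}$; your proposal conflates these two stacks, and that is exactly where the missing $\mu_2$'s hide.

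What is missing is the construction, from the Prym datum, of the two double covers $C_1 \to \cP$ and $C_2 \to \cP$ themselves (not merely their branch divisors), since an object of $\cH_{n-1}\times_{B\PGL_2}\cH_{g-n}$ consists of the covers. The paper does this by first identifying $\wRH_g$ with the Hurwitz stack of unramified double covers, so that the Prym datum becomes $C' = \operatorname{Spec}_C(\cO_C\oplus\eta) \to C$; it then lifts the hyperelliptic involution $\iota$ of $C$ to an involution $\tilde\iota$ of $C'$ (possible because $\iota$ acts as $-1$ on the Jacobian, hence preserves $\eta$), and takes the quotients of $C'$ by the three nontrivial elements of $\langle \kappa,\tilde\iota\rangle \cong \mu_2^{\times 2}$ to produce $C_1$, $C_2$ and $C$; conversely, $(C_1,C_2)$ maps to $C' = C_1\times_\cP C_2 \to C = C'/\langle \iota_1\times\iota_2\rangle$, with a pushforward computation of $f_*\pi_*\cO_{C'}$ pinning down the component $n$. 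It is precisely this step that matches the extra $\mu_2$ (deck transformations of $C_1$ and $C_2$, versus the deck transformation of $C$ together with the sign of $\tau$) and makes the automorphism groups agree. Without it, your argument proves a presentation of $\RH_g^n$, not of $\wRH_g^n$.
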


The proof is given in~\S\ref{sec: geometric input} and relies on identifying the stack $\wRH_g$ with the stack of unramified degree-$2$ Hurwitz covers $C' \to C$ of a genus-$g$ curve. In particular, the above proposition provides a different geometric interpretation of the discrete invariant $n$ appearing in the decomposition \eqref{eqn: decomposition RH alt2}: the curve $C$ is obtained as a fiber product over $\P^1$ of two degree-$2$ covers of $\P^1$, one ramified at $n$ points and the other at $g+1-n$ points.  This is parallel to Beauville’s statement~\cite{Bea89}[Example (a) page 609] that the Prym variety associated with $C$ is simply the product of the Jacobians of the two double covers above.

Quotient stack resentations of the stack $\cH_g$ are known for all $g$ (\cite[Corollary 4.7]{AV04}), and we use those and the above proposition to derive presentations of all $\wRH_g^n$.

\begin{theorem}\label{thm: g even presentation tilde}
    Suppose $g \geq 2$ is even and $n \in \{1,\ldots,g/2\}$. Then,
    \begin{itemize}
        \item[(i)] if $n$ is even, we have an isomorphism of stacks
    $$
    \wRH_g^n \cong \bigg[  \frac{(\chi^{-2}\otimes\Sym^{2n}(V^{\vee})\otimes\det(V)^{\otimes n}\times\Sym^{2g+2-2n}(V^{\vee})\otimes\det(V)^{\otimes g-n}))\setminus\Delta }{\Gm\times\GL_2} \bigg];
    $$
    where $\Delta$ is the locus of polynomials $(f,g)$ such that $fg$ is singular.
    \item[(ii)] if $n$ is odd, we have
    $$
    \wRH_g^n \cong \bigg[  \frac{(\Sym^{2n}(V^{\vee})\otimes\det(V)^{\otimes n-1}\times\chi^{-2}\otimes\Sym^{2g+2-2n}(V^{\vee})\otimes\det(V)^{\otimes g+1-n})\setminus\Delta}{\Gm\times\GL_2} \bigg].
    $$
    where $\Delta$ is the locus of polynomials $(f,g)$ such that $fg$ is singular.
    \end{itemize}
\end{theorem}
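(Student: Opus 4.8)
The plan is to combine Proposition~\ref{prop: starting point} with the Arsie--Vistoli presentations of the stacks $\cH_h$ and to carry out the resulting fiber product over $B\PGL_2$ explicitly. Since $g$ is even and $n \le g/2$, we have $n < (g+1)/2$, so the first case of Proposition~\ref{prop: starting point} applies and gives
\[
\wRH_g^n \;\cong\; (\cH_{n-1} \times_{B\PGL_2} \cH_{g-n}) \setminus \Delta .
\]
The key structural observation is that, because $g$ is even, the two sub-genera $n-1$ and $g-n$ always have opposite parity; this is exactly the origin of the dichotomy in the statement.

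Next I would record the two shapes of the presentation of $\cH_h$ from \cite[Corollary~4.7]{AV04}, organized by the parity of $h$ (equivalently of $d := h+1$, the degree of the line bundle $L$ defining the double cover, whose branch form has degree $2h+2 = 2d$). When $h$ is even, $\O(d)$ is not available on a nontrivial Brauer--Severi curve, so one must remember a $\GL_2$-structure and
\[
\cH_h \;\cong\; \big[(\Sym^{2h+2}(V^\vee) \otimes \det(V)^{\otimes h}) \setminus \Delta / \GL_2\big],
\]
with no extra $\Gm$ and with the $\det$-twist pinned by requiring the generic stabilizer to be the hyperelliptic involution $\mu_2$. When $h$ is odd the bundle $\O(d) = \O(2)^{\otimes d/2}$ descends to $B\PGL_2$, the scaling of the defining section survives as a genuine $\Gm$, and
\[
\cH_h \;\cong\; \big[(\chi^{-2}\otimes\Sym^{2h+2}(V^\vee)\otimes\det(V)^{\otimes h+1}) \setminus \Delta / (\Gm \times \PGL_2)\big],
\]
where now the $\det$-power $h+1$ is \emph{forced} by the requirement that the representation descend to $\PGL_2$ (the scalars act trivially), and the weight $-2$ of $\chi$ gives the correct $\mu_2$ generic stabilizer.

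With these in hand, the heart of the argument is the computation of the fiber product of quotient stacks over $B\PGL_2$. Since exactly one of $\cH_{n-1}, \cH_{g-n}$ is presented by $\GL_2$ and the other by $\Gm \times \PGL_2$, the presenting group of the fiber product is
\[
(\Gm \times \PGL_2) \times_{\PGL_2} \GL_2 \;\cong\; \Gm \times \GL_2 ,
\]
using $\PGL_2 \times_{\PGL_2} \GL_2 \cong \GL_2$; this is precisely the group appearing in the theorem. I would then pull back the two representations along this identification: the $\PGL_2$-representation on the odd-sub-genus factor pulls back to the \emph{same} expression, now read as a $\Gm \times \GL_2$-representation (this is legitimate exactly because the scalars of $\GL_2$ act trivially on it), while the $\GL_2$-representation on the even-sub-genus factor is unchanged and carries the trivial $\Gm$-action. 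Reading off the two factors, the $\chi^{-2}$ lands on the factor of odd sub-genus, which is the first factor when $n$ is even (since then $n-1$ is odd) and the second when $n$ is odd (since then $g-n$ is odd), while the $\det$-exponents are $h$ for the even and $h+1$ for the odd sub-genus; substituting $h = n-1$ and $h = g-n$ yields exactly the representations in (i) and (ii). Finally, the locus $\Delta$ removed here agrees with the one in Proposition~\ref{prop: starting point}, namely the pairs $(f,g)$ with $fg$ singular.

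I expect the main obstacle to be twofold: first, pinning down the two Arsie--Vistoli presentations in exactly the twisted form above, i.e. reconciling their conventions with ours and verifying the precise $\det$-twists via the generic-stabilizer and $\PGL_2$-descent computations; and second, justifying the fiber-product-of-quotient-stacks manipulation and, above all, checking that the natural action of $\Gm \times \GL_2$ on the two form spaces obtained by pulling back the $\PGL_2$-representation is literally the claimed representation, including the weight $-2$ of $\chi$ and the correct $\det$-powers. The parity bookkeeping that routes $\chi^{-2}$ to the correct factor is where the two cases of the theorem genuinely diverge.
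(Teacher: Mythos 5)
Your proposal is correct and takes essentially the same approach as the paper: both invoke Proposition~\ref{prop: starting point}, insert the parity-dependent presentations of $\cH_{n-1}$ and $\cH_{g-n}$ from \cite{AV04} (and \cite{EF09}), and compute the fiber product over $B\PGL_2$ through the group-level identification $(\Gm\times\PGL_2)\times_{\PGL_2}\GL_2\cong\Gm\times\GL_2$, which is precisely the cartesian-diagram expansion carried out in the paper's proof. Your parity bookkeeping (placing $\chi^{-2}$ on the odd sub-genus factor and the $\det$-exponents $h$ versus $h+1$) reproduces exactly the two cases of the statement.
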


\begin{theorem}\label{thm: pres wRHg for g odd}
    Suppose $g \geq 2$  is odd and $n \in \{1,\ldots, (g-1)/2\}$. Then,
    \begin{itemize}
        \item[(i)] if $n$ is even,we have an isomorphism of stacks
    $$
    \wRH_g^n \cong \bigg[  \frac{(\chi^{(1)})^{-2} \otimes W_n \times (\chi^{(2)})^{-2} \otimes W_{g+1-n} \smallsetminus \Delta}{ \Gm^{(1)} \times \Gm^{(2)} \times \PGL_2} \bigg]
    $$
    where $\Delta$ is the locus of polynomials $(f,g)$ such that $fg$ is singular. The groups $\Gm^{(1)}$ and $\Gm^{(2)}$ are both copies of $\Gm$, with the superscript indicating that $\Gm^{(i)}$ acts only on the $i$-th factor of the product. Similarly, $\chi^{(i)}$ represents the character $\chi$, with the superscript specifying which $\mathbb{G}_m$ acts on it;
    \item[(ii)] if $n$ is odd, we have 
    $$
    \wRH_g^n \cong \bigg[  \frac{\Sym^{2n}(V^\vee) \otimes \det(V)^{\otimes n-1} \times \chi^{\otimes -2} \otimes \Sym^{2g+2-2n}(V^\vee) \otimes \det(V)^{\otimes g-n} \smallsetminus \Delta}{\Gm \times \GL_2} \bigg]
    $$
    where $\Delta$ is the locus of polynomials $(f,g)$ such that $fg$ is singular.
    \end{itemize}
\end{theorem}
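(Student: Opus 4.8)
The plan is to feed the known quotient presentations of the hyperelliptic stacks $\cH_m$ into the description of $\wRH_g^n$ supplied by Proposition~\ref{prop: starting point}, and then to compute the resulting fibered products of quotient stacks over $B\PGL_2$. Since $g$ is odd and $n\le(g-1)/2<(g+1)/2$, the first bullet of Proposition~\ref{prop: starting point} applies and gives
\[
\wRH_g^n \;\cong\; (\cH_{n-1}\times_{B\PGL_2}\cH_{g-n})\setminus\Delta .
\]
I would then invoke \cite[Corollary 4.7]{AV04}, recalling that the presentation of $\cH_m$ is parity-dependent: for $m$ even it is a quotient of (the smooth locus of) $\Sym^{2m+2}(V^\vee)\otimes\det(V)^{\otimes m}$ by $\GL_2$, and for $m$ odd it is a quotient of $\chi^{-2}\otimes W_{m+1}$ by $\Gm\times\PGL_2$; in both cases the structure map to $B\PGL_2$ is the one induced by the projection of the acting group onto $\PGL_2$. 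Because $g$ is odd, $n-1$ and $g-n$ share a common parity, opposite to that of $n$, and this is exactly the dichotomy appearing in the statement. A preliminary point to check is that this $B\PGL_2$-map agrees with the one used to define $\Delta$ in Proposition~\ref{prop: starting point}, so that the fiber products below genuinely compute $\wRH_g^n$.

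The main tool for the two computations is the identity $[X_1/G_1]\times_{BK}[X_2/G_2]\cong[(X_1\times X_2)/(G_1\times_K G_2)]$, where $G_1\times_K G_2$ is the fibered product of groups along the structure homomorphisms $G_i\to K$. In case (i), $n$ is even, so $n-1$ and $g-n$ are odd and both factors are presented over $\Gm\times\PGL_2$, with forms in $\chi^{-2}\otimes W_n$ and $\chi^{-2}\otimes W_{g+1-n}$ (using $(n-1)+1=n$ and $(g-n)+1=g+1-n$). Here $(\Gm\times\PGL_2)\times_{\PGL_2}(\Gm\times\PGL_2)\cong\Gm^{(1)}\times\Gm^{(2)}\times\PGL_2$: the two copies of $\PGL_2$ are identified (the two conics coincide), while the scaling tori remain independent. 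This yields directly the presentation in (i).

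Case (ii), with $n$ odd, is where the real work lies. Now $n-1$ and $g-n$ are even, both factors are presented over $\GL_2$, and the relevant group is $\GL_2\times_{\PGL_2}\GL_2$. The crucial identification is $\GL_2\times_{\PGL_2}\GL_2\cong\Gm\times\GL_2$ via $(A,\lambda)\mapsto(A,\lambda A)$; under it the standard representation of the second factor becomes $\chi\otimes V$. The first representation $\Sym^{2n}(V^\vee)\otimes\det(V)^{\otimes n-1}$ is then unchanged, whereas the second, $\Sym^{2g+2-2n}(V^\vee)\otimes\det(V)^{\otimes g-n}$, acquires the character $\chi^{-(2g+2-2n)+2(g-n)}=\chi^{-2}$, producing exactly $\chi^{-2}\otimes\Sym^{2g+2-2n}(V^\vee)\otimes\det(V)^{\otimes g-n}$ as in the statement. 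I expect this representation-theoretic bookkeeping—transporting the $\Sym$- and $\det$-twists through the isomorphism $\GL_2\times_{\PGL_2}\GL_2\cong\Gm\times\GL_2$ and verifying that the net $\Gm$-weight is precisely $-2$—to be the principal obstacle, together with confirming that throughout, the excised locus $\Delta$ (pairs of forms with a common zero, equivalently with $fg$ singular) is preserved under every identification above.
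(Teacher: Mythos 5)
Your proposal is correct and follows essentially the same route as the paper: Proposition~\ref{prop: starting point} plus the parity-dependent presentations of \cite[Corollary 4.7]{AV04}, then computing the fiber product at the level of groups, where your identifications $(\Gm\times\PGL_2)\times_{\PGL_2}(\Gm\times\PGL_2)\cong\Gm^{(1)}\times\Gm^{(2)}\times\PGL_2$ and $\GL_2\times_{\PGL_2}\GL_2\cong\Gm\times\GL_2$, $(\lambda,A)\mapsto(A,\lambda A)$, are exactly the paper's diagrams~\eqref{eqn: diagram of groups for g odd} and the square with $\rho(\lambda,A)=\lambda A$. Your explicit weight bookkeeping (net $\Gm$-weight $-(2g+2-2n)+2(g-n)=-2$) is the step the paper leaves implicit in ``the statement follows,'' and it checks out.
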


As is clear from Proposition~\ref{prop: starting point}, the case $g$ odd and $n=(g+1)/2$ is special.

\begin{notation}\label{not: representation A of G}
    Let $G:=\GI \subseteq \GL_2$ be the subgroup of matrices preserving the set of lines $\{ k (1,0), k(0,1)\} \subseteq k^2$. Equivalently, this is the subgroup of $\GL_2$ consisting of matrices of the form 
    $$
    (a,b;0):=
    \begin{pmatrix}
    a & 0 \\
    0 & b 
    \end{pmatrix}
    \ \text{or} \ 
    (a,b;1):=
    \begin{pmatrix}
    0 & a \\
    b & 0 
    \end{pmatrix}
    $$
    for $a,b \in k^*$. Here $k$ is the ground field. We will also regard $V$ as a $G$-representation via the inclusion $G \hookrightarrow \GL_2$.  
    Finally, we define $A$ to be the $G$-representation obtained by letting $G$ acting on $V^\vee$ via the group homomorphism 
    \[
        G \longrightarrow G, \qquad (a,b;\varepsilon) \longmapsto (a^2,b^2;\varepsilon).
    \]
\end{notation}

\begin{notation}
    Let $H: = (\Gm \times \GL_2) \rtimes \mu_2$ where the non-trivial element of $\mu_2$ acts on $(\lambda,A) \in \Gm \times \GL_2$ sending it to $(\lambda^{-1},\lambda A)$. We denote an element of $H$ by $(\lambda,A; \varepsilon)$.

    Let also $B$ be the $2$-dimensional representation of $H$ given by
    $$
    (\lambda,A; \varepsilon) \cdot (x,y):= 
    \begin{cases}
    (\det A)^{-1}\cdot(x,\lambda^{-2} y ) & \text{for } \varepsilon =0;
    \\  (\det A)^{-1}\cdot(y, \lambda^{-2} x) & \text{for } \varepsilon=1.
    \end{cases}
    $$
\end{notation}

\begin{theorem}\label{thm: pres g+1/2}
    Suppose $g \geq 2$ is odd and that $n=(g+1)/2$. Then, 
    \begin{itemize}
        \item[(i)] if $n$ is even, we have an isomorphism of stacks
        $$
    \wRH_g^n \cong \bigg[  \frac{A \otimes W_{{\frac{g+1}{2}}} \smallsetminus \Delta}{G \times \PGL_2}  \bigg]
    $$
    where $\Delta$ is the locus of polynomials $(f,g)$ such that $fg$ is singular;
    \item[(ii)] if $n$ is odd, we have
    $$
    \wRH_g^n \cong \bigg[ \frac{ \det(V)^{\otimes \frac{g+1}{2}} \otimes \Sym^{g+1}(V^{\vee}) \otimes B \smallsetminus \Delta}{ H } \bigg]
    $$
    where $\Delta$ is the locus of polynomials $(f,g)$ such that $fg$ is singular.
    \end{itemize}
\end{theorem}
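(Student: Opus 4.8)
The plan is to derive both presentations from the isomorphism
\[
\wRH_g^n \cong \left[\frac{(\cH_{(g-1)/2}\times_{B\PGL_2}\cH_{(g-1)/2})\setminus\Delta}{\mu_2}\right]
\]
supplied by Proposition~\ref{prop: starting point}, by first giving a quotient presentation of the fibre product inside the bracket and then absorbing the swapping $\mu_2$ into the acting group. Write $h=(g-1)/2$, so that $n=h+1$; in particular $n$ and $h$ have opposite parity, and the two factors are two copies of $\cH_h$. The presentation of $\cH_h$ from \cite[Corollary 4.7]{AV04} depends on the parity of $h$: a genus-$h$ double cover $C\to P$ of a Brauer–Severi curve $P$ requires a square root of the branch divisor, of fibrewise degree $h+1$, and this is compatible with a nontrivial Brauer class exactly when $h+1$ is even, i.e. when $h$ is odd. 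Thus for $h$ odd one obtains a $\Gm\times\PGL_2$-presentation with representation $\chi^{-2}\otimes W_{h+1}$, while for $h$ even one is forced onto the trivial Brauer–Severi curve $\P(V)$ and obtains a $\GL_2$-presentation with representation $\det(V)^{\otimes\bullet}\otimes\Sym^{2h+2}(V^\vee)$. This is precisely the dichotomy already exploited in Theorem~\ref{thm: pres wRHg for g odd}, and I would reuse that computation to present the fibre product.

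Concretely, I would run the argument proving Theorem~\ref{thm: pres wRHg for g odd} with the two (now equal) factors $\cH_{n-1}$ and $\cH_{g-n}=\cH_{n-1}$, producing a presentation of $(\cH_h\times_{B\PGL_2}\cH_h)\setminus\Delta$ \emph{before} the $\mu_2$-quotient. For $n$ even ($h$ odd) the two maps to $B\PGL_2$ share a common $\PGL_2$, so the fibre product is
\[
\left[\frac{(\chi^{(1)})^{-2}\otimes W_n\times(\chi^{(2)})^{-2}\otimes W_n\setminus\Delta}{\Gm^{(1)}\times\Gm^{(2)}\times\PGL_2}\right].
\]
For $n$ odd ($h$ even) both factors carry a $\GL_2$-presentation, and using $B\GL_2\times_{B\PGL_2}B\GL_2\cong B\GL_2\times B\Gm$ — a $\GL_2$-torsor together with a second lift of the same $\PGL_2$-torsor is a $\GL_2$-torsor plus a $\Gm$-torsor — the fibre product becomes a $\Gm\times\GL_2$-quotient in which the second factor acquires the twist $\chi^{-2}$. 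This recovers the $n=(g+1)/2$ specialization of the formula in Theorem~\ref{thm: pres wRHg for g odd}(ii), namely both summands equal to $\det(V)^{\otimes n-1}\otimes\Sym^{2n}(V^\vee)$ with one extra $\chi^{-2}$ on the second.

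It then remains to fold in the swap $\mu_2$. For $n$ even the two summands are symmetric (summand $i$ depends only on $\chi^{(i)}$), so the swap acts by the plain exchange $\chi^{(1)}\leftrightarrow\chi^{(2)}$; hence $(\Gm^{(1)}\times\Gm^{(2)})\rtimes\mu_2=G=\GI$, the $\PGL_2$ is unaffected, and the two summands $(\chi^{(i)})^{-2}\otimes W_n$ assemble into $A\otimes W_n$, since $A$ restricts to $\chi^{(1)-2}\oplus\chi^{(2)-2}$ on the diagonal torus and is interchanged by $\mu_2$; this gives presentation (i). For $n$ odd the two $\GL_2$-summands are \emph{not} symmetric — one carries an extra $\chi^{-2}$ — so the swap must be implemented by the nontrivial automorphism $(\lambda,A)\mapsto(\lambda^{-1},\lambda A)$ of $\Gm\times\GL_2$, yielding the semidirect product $H$. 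The crucial check is that this automorphism genuinely exchanges the two summands: under $A\mapsto\lambda A$ a summand $\det(A)^{n-1}\Sym^{2n}(A^\vee)$ picks up $\lambda^{2(n-1)-2n}=\lambda^{-2}$, while the inversion $\lambda\mapsto\lambda^{-1}$ cancels, respectively creates, the prefactor $\chi^{-2}$, so the two summands are interchanged exactly. Pulling the common $(\det V)^{-1}$ out of both summands and packaging the remaining $(\lambda^{-2},\text{swap})$-data into the $2$-dimensional representation $B$ then produces $\det(V)^{\otimes(g+1)/2}\otimes\Sym^{g+1}(V^\vee)\otimes B$, which is presentation (ii).

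The main obstacle I expect is this last bookkeeping in the $n$ odd case: correctly identifying the semidirect product as $H$ with its specific action and verifying that the representation is \emph{exactly} $\det(V)^{\otimes(g+1)/2}\otimes\Sym^{g+1}(V^\vee)\otimes B$ rather than a mere abstract twist of it. This amounts to reconciling three sources of $\Gm$-weight — the determinant twist inside each $\GL_2$-factor, the extra $\Gm$ parametrizing the second lift in the fibre product, and the reallocation of weight forced by turning the swap into an automorphism — and checking that the piecewise formula defining $B$ is the unique $H$-equivariant structure compatible with all three. Throughout I would also confirm that the singular locus $\Delta$, cut out by the discriminant of the product $fg$, is manifestly symmetric under the swap and is preserved by each identification, so that removing it commutes with all the stack manipulations.
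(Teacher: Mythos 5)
Your proposal is correct and follows essentially the same route as the paper: starting from Proposition~\ref{prop: starting point}, presenting the double cover $(\cH_{(g-1)/2}\times_{B\PGL_2}\cH_{(g-1)/2})\setminus\Delta$ as a quotient by the index-two subgroup ($\Gm^{(1)}\times\Gm^{(2)}\times\PGL_2$ for $n$ even, $\Gm\times\GL_2$ for $n$ odd) using the machinery of Theorem~\ref{thm: pres wRHg for g odd}, and then absorbing the swap $\mu_2$ into the acting group to obtain the $G\times\PGL_2$- and $H$-presentations. Your treatment of the $n$ odd case --- in particular the weight check that $(\lambda,A)\mapsto(\lambda^{-1},\lambda A)$ exchanges the two summands and the identification of the resulting $H$-representation with $\det(V)^{\otimes\frac{g+1}{2}}\otimes\Sym^{g+1}(V^{\vee})\otimes B$ --- correctly spells out the details that the paper compresses into the phrase ``completely analogous.''
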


From the above presentations, one can derive the next proposition. In~\S\ref{sec: Presentations and properties of the various stacks}, we will provide several proofs of this statement, each highlighting a different aspect of the stacks under consideration.

\begin{proposition}\label{prop: root stacks}
    The morphism $r_g^n$ in Equation \eqref{eqn: rigidification map} is a $\mu_2$-root gerbe if and only if one of the following holds:
\begin{enumerate}
    \item[(i)] $g$ is even and $n$ is arbitrary. In this case, with the notation of Theorem \ref{thm: Chow RH g even}, it corresponds to adding a square root of $t$ when $n$ is even, and a square root of $c_1 + t$ when $n$ is odd;
    \item[(ii)] $g$ is odd and $n$ is even with $n \neq \tfrac{g+1}{2}$. In this case, with the notation of~\cite[Theorem 1.18]{CLI}, it corresponds to adding a square root of $\xi_{2n}$ (equivalently, $\xi_{2g+2-2n}$).
\end{enumerate}
\end{proposition}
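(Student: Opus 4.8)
The plan is to reduce the statement to a group-theoretic criterion and then to locate, in each quotient presentation, the central element that generates the band of the gerbe. By construction $r_g^n$ is a $\mu_2$-gerbe (it is the restriction of the $\mu_2$-rigidification $\wRH_g \to \RH_g$), and in each presentation $\wRH_g^n \cong [X/\tilde G]$ of Theorems \ref{thm: g even presentation tilde}, \ref{thm: pres wRHg for g odd} and \ref{thm: pres g+1/2} this band is generated by the unique central order-two element $z \in \tilde G$ acting trivially on $X$, with $\RH_g^n$ the quotient by $\tilde G / \langle z\rangle$. The criterion I would use --- in the form supplied by the appendix, via the induced $\Gm$-gerbe along $\mu_2 \hookrightarrow \Gm$ --- is that such a gerbe is a $\mu_2$-root gerbe if and only if it carries a line bundle on which the inertial $\mu_2$ acts through the standard character; equivalently, if and only if there is a character $\theta \colon \tilde G \to \Gm$ with $\theta(z) = -1$. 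Indeed, such a $\theta$ produces a weight-one line bundle $\cL_\theta$ whose square descends to a line bundle $\M$ on $\RH_g^n$ with $\wRH_g^n \cong \sqrt{\M}$, and conversely the tautological root of any root gerbe is such a weight-one bundle.

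To see that only characters can matter, I would first note that each $X$ is the complement of the discriminant hypersurface $\Delta$ inside an affine representation, so $\Pic(X) = 0$ and $\Pic([X/\tilde G]) = \hat{\tilde G}/\langle \mathrm{wt}(\Delta)\rangle$. Since $z$ acts trivially on $X$ it acts trivially on the equation of $\Delta$, so the ``weight at $z$'' pairing kills $\mathrm{wt}(\Delta)$ and descends to a homomorphism $\Pic([X/\tilde G]) \to \Z/2$. Hence a weight-one line bundle exists precisely when a character $\theta$ with $\theta(z) = -1$ does.

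I would then determine $z$ case by case by comparing the presentation of $\wRH_g^n$ with that of $\RH_g^n$ (Theorem \ref{thm: pres RH} for $g$ even, and \cite[Theorem 1.18]{CLI} for $g$ odd) and reading off the comparison isogeny $\rho \colon \tilde G \to \tilde G/\langle z \rangle$. For $g$ even both sides are quotients by $\Gm \times \GL_2$ of the same underlying representation; matching $\chi$-weights and $\det$-twists shows that $r_g^n$ is induced by $(\lambda,A) \mapsto (\lambda^2,\lambda A)$ for $n$ even and by $(\lambda,A)\mapsto (\det A,\lambda A)$ for $n$ odd, so that $\ker\rho = \langle(-1,-I)\rangle$ in either case. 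As $\theta_{p,q}(\lambda,A) = \lambda^p\det(A)^q$ gives $\theta(-1,-I) = (-1)^p$, the band is detected; pulling back along $\rho$ then identifies $\M$ as $\chi$ (class $t$) for $n$ even and $\chi \otimes \det(V)$ (class $c_1 + t$) for $n$ odd, matching Theorem \ref{thm: Chow RH g even}. For $g$ odd and $n$ even the group is $\Gm^{(1)} \times \Gm^{(2)} \times \PGL_2$ and the comparison places $z$ inside one of the $\Gm^{(i)}$ factors, again detected by $(\chi^{(i)})^{p}$, giving the square root of $\xi_{2n}$.

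The decisive point --- and the main obstacle --- is the complementary dichotomy: for $g$ odd with $n$ odd, and for $g$ odd with $n = (g+1)/2$ of either parity, the band is the scalar $-I \in \GL_2$ rather than a $\Gm$-scalar. In the first case the rigidification is realized by the projection $\GL_2 \to \PGL_2$ (Theorem \ref{thm: pres wRHg for g odd}(ii) against the $\Gm \times \PGL_2$-presentation of $\RH_g^n$), so $z = -I$; since $\det(-I) = 1$ and $\PGL_2$ has no nontrivial characters, every character is trivial on $z$ and the gerbe is not a root gerbe. For $n = (g+1)/2$ I would run the same analysis inside $G \times \PGL_2$ with $G = \GI$ (for $n$ even) and inside $H = (\Gm \times \GL_2) \rtimes \mu_2$ (for $n$ odd): in each case $-I$ is the central trivially-acting involution, and its image in the abelianization --- computed as $\Gm \times \mu_2$ via $(a,b;\varepsilon)\mapsto(ab,\varepsilon)$, respectively $(\lambda,A;\varepsilon)\mapsto(\lambda\det A,\varepsilon)$ --- is trivial, so again no character detects it. The real work lies precisely here: one must pin down $z$ rather than merely exhibit some trivially-acting involution (the presentations also contain \emph{visible} involutions such as $(-1,I)$), which amounts to tracing the automorphism ``multiplication by $-1$ on $\eta$'' through the fiber-product-of-double-covers description of Proposition \ref{prop: starting point}; and for the semidirect-product groups one must carry out the abelianization computations and rule out central elements involving the swapping factor $\varepsilon = 1$.
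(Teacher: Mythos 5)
Your reduction to a character criterion is sound: for a presentation $[X/\tilde G]$ with $X$ the complement of the discriminant in a representation, $\Pic([X/\tilde G])$ is a quotient of the character group of $\tilde G$, and the gerbe obtained by rigidifying a central, trivially-acting involution $z$ is a $\mu_2$-root gerbe precisely when some character $\theta$ has $\theta(z)=-1$; this is equivalent to the paper's Lemma~\ref{lem: criterion root gerbe with Pic}, and in the positive cases your comparison isogenies agree with the paper's Proof~1 of parts (i) and (ii). The genuine gap is that your entire argument hinges on identifying the band $z$, and in the decisive negative case ($g$ and $n$ both odd, $n\neq(g+1)/2$) this identification is asserted, not proved. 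There the trivially-acting subgroup of $\Gm\times\GL_2$ is $\{(\pm1,\pm I)\}\cong\mu_2\times\mu_2$ and \emph{all three} nontrivial involutions are central, so centrality cannot single out $z$; worse, the conclusion is not robust to the choice: if the band were $(-1,I)$ or $(-1,-I)$, the character $(\lambda,A)\mapsto\lambda$ would detect it and your criterion would declare $r_g^n$ a root gerbe --- the opposite of what must be shown. Your stated justification ("the rigidification is realized by the projection $\GL_2\to\PGL_2$ against the $\Gm\times\PGL_2$-presentation of $\RH_g^n$") is circular, since knowing which isogeny of groups presents $r_g^n$ is exactly the same datum as knowing $z$ (and the quoted presentation does not exist in that form: the group for $\RH_g^n$, $g$ odd, is $\Gm\times\Gm\times\PGL_2$, as $(\Gm\times\GL_2)/\langle(1,-I)\rangle\cong\Gm\times\Gm\times\PGL_2$). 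The missing step is the tracing you yourself call "the real work": under $\Gm\times\GL_2\cong\GL_2\times_{\PGL_2}\GL_2$, $(\lambda,A)\mapsto(A,\lambda A)$, and using that in the $\GL_2$-presentation of $\cH_h$ ($h$ even) the hyperelliptic involution is $-I$, the two involutions $\iota_1,\iota_2$ are $(-1,-I)$ and $(-1,I)$, so the deck involution of $C'\to C$ (which is "$-1$ on $\eta$") is their product $(1,-I)$; only then does $\det(-I)=1$ kill all characters. Flagging this step without carrying it out leaves the proof incomplete at precisely its crux.

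Two further points. First, a factual slip in case (ii): for $g$ odd, $n$ even the band is not "inside one of the $\Gm^{(i)}$ factors" but is the diagonal involution $(-1,-1,[I])=\iota_1\iota_2$, as one also reads off from the kernel of the paper's isogeny $(\lambda_1,\lambda_2,[A])\mapsto(\lambda_1^2,\lambda_1^{-1}\lambda_2,[A])$; your conclusion survives because $\chi^{(1)}$ still evaluates to $-1$ on the diagonal. Second, it is worth noting how the paper sidesteps the problem you left open: its proof of the negative cases never identifies the band, but instead bounds Picard indices along the composite $\wRH_g^n\to\RH_g^n\to\cD_{2n,2g+2-2n}$, using that the second map is a root gerbe (index $2$ on $\Pic$) by \cite[Lemma 1.15]{CLI}, that the modular map $\wRH_g^n\to\cD_{2n,2g+2-2n}$ (take branch divisors) is unambiguous on presentations, and that the induced map on characters has index $2$; Lemma~\ref{lem: criterion root gerbe with Pic} then concludes. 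Your band-theoretic route would work --- and your abelianization computations for $G$ and $H$ in the $(g+1)/2$ cases are correct, where centrality genuinely does force $z=(-1,-1;0)$, resp.\ $(1,-I;0)$ --- but only after the tracing above is supplied.
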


Using \cite[Proposition 3.5]{CLI}, we immediately obtain the Chow rings of $\wRH_g^n$ in the following cases: when $g$ is even and $n$ is arbitrary, and when $g$ is odd and $n$ is even with $n \neq \tfrac{g+1}{2}$. In the first case, by Theorem~\ref{thm: Chow RH g even} we get:

\begin{corollary}\label{cor: chow wRHg for g even}
    Suppose that $g \geq 2$ is even and $n \in \{1,\ldots, g/2\}$. Then
    $$
    \CH^*(\wRH_g^n) \cong \frac{\Z[u,t,c_1,c_2]}{I+(2u-\alpha)}
    $$
    where $I$ is the ideal in Theorem~\ref{thm: Chow RH g even} and $\alpha$ is equal to $t$ when $n$ is even, and equal to $t+c_1$ when $n$ is odd.
\end{corollary}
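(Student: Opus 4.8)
The plan is to deduce the statement formally from two inputs already available: the geometric identification of $r_g^n$ as a $\mu_2$-root gerbe provided by Proposition~\ref{prop: root stacks}(i), and the general formula for the Chow ring of such a root gerbe recorded in \cite[Proposition 3.5]{CLI}. With these in hand, the computation reduces to substituting the explicit presentation of $\CH^*(\RH_g^n)$ from Theorem~\ref{thm: Chow RH g even}.

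First I would invoke Proposition~\ref{prop: root stacks}(i): for $g$ even and any $n \in \{1,\ldots,g/2\}$, the rigidification map $r_g^n \colon \wRH_g^n \to \RH_g^n$ is a $\mu_2$-root gerbe, and more precisely it is the root gerbe associated to a line bundle on $\RH_g^n$ whose first Chern class is $\alpha$, where $\alpha = t$ when $n$ is even and $\alpha = t + c_1$ when $n$ is odd. This is the sole point at which the parity of $n$ enters, and it is exactly what pins down the class whose square root is being adjoined.

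Next I would recall the content of \cite[Proposition 3.5]{CLI}: if $Y \to X$ is the $\mu_2$-root gerbe of a line bundle $L$ on $X$ with $c_1(L) = \alpha$, and $u \in \CH^1(Y)$ denotes the first Chern class of the tautological square-root line bundle, then $\CH^*(Y)$ is generated over $\CH^*(X)$ by $u$ subject to the single relation $2u = \alpha$, i.e.
\[
\CH^*(Y) \cong \CH^*(X)[u]/(2u - \alpha).
\]
Applying this with $X = \RH_g^n$ and $Y = \wRH_g^n$, and substituting $\CH^*(\RH_g^n) = \Z[t,c_1,c_2]/I$ from Theorem~\ref{thm: Chow RH g even}, immediately yields
\[
\CH^*(\wRH_g^n) \cong \frac{\Z[t,c_1,c_2]}{I}[u]\big/(2u - \alpha) \cong \frac{\Z[u,t,c_1,c_2]}{I + (2u - \alpha)},
\]
which is the claimed presentation.

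The argument is thus a formal consequence of the two cited results, and I do not expect a serious obstacle: all of the substantive work — both the root-gerbe identification and the computation of the base Chow ring — has been carried out upstream. The only genuine verification is bookkeeping, namely confirming that the class square-rooted really is $t$ (respectively $t + c_1$) in the coordinates of Theorem~\ref{thm: Chow RH g even} rather than some other representative of the same line bundle, and that the generator $u$ is correctly identified with the first Chern class of the tautological root. Matching this notation carefully, so that the relation reads precisely $2u - \alpha$, is the main point of care.
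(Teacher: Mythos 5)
Your proposal is correct and follows exactly the paper's own route: the paper likewise derives the corollary by combining Proposition~\ref{prop: root stacks}(i) (the identification of $r_g^n$ as the $\mu_2$-root gerbe of a line bundle with first Chern class $t$, resp.\ $t+c_1$) with the root-gerbe Chow ring formula of \cite[Proposition 3.5]{CLI} and the presentation of $\CH^*(\RH_g^n)$ in Theorem~\ref{thm: Chow RH g even}. Your only addition is to spell out the bookkeeping that the paper leaves implicit, which is fine.
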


For a geometric interpretation of the generators $u,t,c_1,c_2$, see~\S\ref{sec: interpretation generators tilde g even}. In the second case, the most symmetric presentation of the Chow ring is obtained by viewing $\wRH_g^n$ as a composite of two root-gerbes over $\cD_{2n,\,2g+2-2n}$, namely via the composition $\wRH_g^n \to \RH_g^n \to \cD_{2n.2g+2-2n}$. From~\cite[Theorems 1.17, 1.18]{CLI} we obtain:

\begin{corollary}\label{cor: chow wRHg for g odd n even}
    Suppose that $g \geq 2$ is odd and $n \in \{1,\ldots,(g-1)/2\}$ is even. Then
    $$
    \CH^*(\wRH_g^n) \cong \frac{\Z[c_1,c_2,c_3,\xi_{2n},\xi_{2g+2-2n},t_{2n},t_{2g+2-2n}]}{I+(2t_{2n}-\xi_{2n},2t_{2g+2-2n}-\xi_{2g+2-2n})}
    $$
    where $I$ is the ideal in~\cite[Theorem 1.17]{CLI} with $b=g+1-n$.
\end{corollary}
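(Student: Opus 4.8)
The plan is to compute $\CH^*(\wRH_g^n)$ by transporting the Chow ring of the base stack $\cD_{2n,\,2g+2-2n}$ up through the two $\mu_2$-root gerbes in the composition $\wRH_g^n \to \RH_g^n \to \cD_{2n,\,2g+2-2n}$, applying the root-gerbe formula of \cite[Proposition~3.5]{CLI} at each stage. Most of the geometric work is already available: by \cite[Theorem~1.17]{CLI} (with $a=n$, $b=g+1-n$) the base has Chow ring $\Z[c_1,c_2,c_3,\xi_{2n},\xi_{2g+2-2n}]/I$, where the $c_i$ are the Chern classes of the rank-$3$ $\PGL_2$-bundle and $\xi_{2n},\xi_{2g+2-2n}$ are the two tautological hyperplane classes; and by \cite[Theorem~1.18]{CLI} the rigidified stack $\RH_g^n$ is realized as the $\mu_2$-root gerbe over this base extracting a square root of $\xi_{2g+2-2n}$, whose Chow ring is therefore $\Z[c_1,c_2,c_3,\xi_{2n},\xi_{2g+2-2n},t_{2g+2-2n}]/(I+(2t_{2g+2-2n}-\xi_{2g+2-2n}))$.

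First I would invoke Proposition~\ref{prop: root stacks}(ii): because $g$ is odd, $n$ is even, and $n\le(g-1)/2<(g+1)/2$, the rigidification map $r_g^n\colon\wRH_g^n\to\RH_g^n$ is a $\mu_2$-root gerbe extracting a square root of $\xi_{2n}$. Applying \cite[Proposition~3.5]{CLI} to $r_g^n$ then adjoins a single new generator $t_{2n}$ subject only to the relation $2t_{2n}=\xi_{2n}$, so that $\CH^*(\wRH_g^n)=\CH^*(\RH_g^n)[t_{2n}]/(2t_{2n}-\xi_{2n})$. Substituting the presentation of $\CH^*(\RH_g^n)$ recorded above yields precisely the stated ring. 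The symmetry of the final answer in the indices $2n$ and $2g+2-2n$ is then transparent: $\wRH_g^n$ is the fiber product over $\cD_{2n,\,2g+2-2n}$ of two independent $\mu_2$-root gerbes, one extracting $\sqrt{\xi_{2n}}$ and the other $\sqrt{\xi_{2g+2-2n}}$, so the order of extraction is immaterial.

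Once the banding classes of the two gerbes are identified, the computation is purely formal, so the step demanding real care is confirming that the class square-rooted by $r_g^n$ is exactly $\xi_{2n}$ and not some twist of it by the $c_i$. I would settle this directly from the presentation in Theorem~\ref{thm: pres wRHg for g odd}(i): there the factor $\Gm^{(1)}$ acts on $W_n$ with weight $-2$, so on the quotient the tautological subbundle $\cO_{\P(W_n)}(-1)$ acquires $\Gm^{(1)}$-weight $-2$; setting $t_{2n}=c_1(\chi^{(1)})$ gives $\cO_{\P(W_n)}(-1)\cong(\chi^{(1)})^{-2}$ and hence $\xi_{2n}=2t_{2n}$, which both pins down the banding class and verifies the hypotheses of \cite[Proposition~3.5]{CLI}. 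This identification — together with the parallel one for $\Gm^{(2)}$ and $\xi_{2g+2-2n}$ underlying \cite[Theorem~1.18]{CLI} — is the main, and only mild, obstacle.
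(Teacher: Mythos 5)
Your overall route is the paper's: factor $\wRH_g^n \to \RH_g^n \to \cD_{2n,2g+2-2n}$ as a composite of two $\mu_2$-root gerbes, take the Chow ring of the base from \cite[Theorem 1.17]{CLI}, identify the bands, and apply \cite[Proposition 3.5]{CLI} twice. The genuine problem is your description of the first gerbe. You assert that \cite[Theorem 1.18]{CLI} exhibits $\RH_g^n \to \cD_{2n,2g+2-2n}$ as the root gerbe extracting a square root of $\xi_{2g+2-2n}$, so that $\CH^*(\RH_g^n)=\CH^*(\cD_{2n,2g+2-2n})[t_{2g+2-2n}]/(2t_{2g+2-2n}-\xi_{2g+2-2n})$. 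This is not what that theorem says: by \cite[Lemma 1.15]{CLI}, as the paper itself records in the remark following the second proof of part (ii) of Proposition~\ref{prop: root stacks}, the morphism $\RH_g^n \to \cD_{2n,2g+2-2n}$ is the root gerbe of the \emph{sum} $\xi_{2n}+\xi_{2g+2-2n}$, so $\CH^*(\RH_g^n)=\CH^*(\cD_{2n,2g+2-2n})[t]/\big(2t-\xi_{2n}-\xi_{2g+2-2n}\big)$. The two rings are genuinely different as algebras over the base: in yours $\xi_{2g+2-2n}$ is divisible by $2$, whereas in $\CH^*(\RH_g^n)$ only the sum is (for $n$ even the degree-one relations of $I$ all have even coefficients, so $\xi_{2g+2-2n}\neq 0$ in $\Pic(\RH_g^n)\otimes\Z/2\Z$). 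Your reading is also incompatible with the clause of Proposition~\ref{prop: root stacks}(ii) you invoke: if $\xi_{2g+2-2n}$ already had a square root on $\RH_g^n$, then adding a root of $\xi_{2n}$ --- equivalently, of $\xi_{2g+2-2n}$ --- would make $r_g^n$ the trivial gerbe $\RH_g^n\times B\mu_2$, which it is not.

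The gap is easily repaired, and your final formula is correct. With the correct input, applying \cite[Proposition 3.5]{CLI} to $r_g^n$ (a root gerbe of $\xi_{2n}$, by Proposition~\ref{prop: root stacks}(ii)) gives $\CH^*(\cD_{2n,2g+2-2n})[t,t_{2n}]/\big(2t-\xi_{2n}-\xi_{2g+2-2n},\,2t_{2n}-\xi_{2n}\big)$, and the change of variables $t_{2g+2-2n}:=t-t_{2n}$ converts this into the stated symmetric presentation; that substitution is the one line your write-up is missing. Relatedly, your closing verification proves the right fact about the wrong stack: the weight computation from Theorem~\ref{thm: pres wRHg for g odd}(i) shows that $\xi_{2n}=2t_{2n}$ and $\xi_{2g+2-2n}=2t_{2g+2-2n}$ hold \emph{on} $\wRH_g^n$, i.e. it exhibits $\wRH_g^n\to\cD_{2n,2g+2-2n}$ as the double root gerbe of $\xi_{2n}$ and $\xi_{2g+2-2n}$ (precisely the content of the paper's remark, and by itself an equally valid way to conclude, since the presentation of $\wRH_g^n$ is the base change of $\cD_{2n,2g+2-2n}$ along $(\lambda_1,\lambda_2,[A])\mapsto(\lambda_1^2,\lambda_2^2,[A])$); but it says nothing about the intermediate stack $\RH_g^n$ and cannot be used to justify the form of \cite[Theorem 1.18]{CLI} that your main derivation rests on.
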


Finally, when $g$ and $n$ are both odd and $n \neq (g+1)/2$ (so $r_g^n$ is not a root gerbe), Theorem~\ref{thm: pres wRHg for g odd} visibly presents $\wRH_g^n$ as a $\Gm^{\times 2}$-torsor over
\[
  \Bigg[ \frac{
    \P(\Sym^{2n}(V^\vee)) \times \P(\Sym^{2g+2-2n}(V^\vee)) \setminus \uD }{\Gm \times \GL_2}
  \Bigg],
\]
whose Chow ring is computed in Theorem~\ref{thm: chow P x P}. From this one obtain the following:

\begin{theorem}\label{thm: Chow wRHgn odd g and n}
    Suppose $g \geq 2$ and $n \in \{1, \ldots, (g-1)/2 \}$ are both odd. Then,
    $$
    \CH^*(\wRH_g^n) \cong \frac{\Z[t,c_1,c_2]}{I}
    $$
    where $I$ is the ideal generated by the following relations:
    \begin{itemize}
        \item $2(2n-1)c_1$,
        \item $(n-1)(n-2)c_1^2-4n(n-1)c_2$,
        \item $4(2g+1-2n)t+4gc_1$,
        \item $4t^2-2(2g-2)c_1t+(g-n)(g-n-1)c_1^2-4(g+1-n)(g-n)c_2$,
        \item $4nt+2(g+1)c_1$,
        \item $-2(n-1)tc_1+(n-1)(g-n)c_1^2-4n(g+1-n)c_2$,
        \item $8n(2n-1)(g+1-n)(2g+1-2n)c_2$. 
    \end{itemize}
\end{theorem}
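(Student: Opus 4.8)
The plan is to read off a presentation of $\wRH_g^n$ as a $\Gm^{\times 2}$-torsor and feed the base into Theorem~\ref{thm: chow P x P}, exactly as in the reduction carried out for $\RH_g^n$ after Theorem~\ref{thm: pres RH}. Since $g$ and $n$ are both odd, Theorem~\ref{thm: pres wRHg for g odd}(ii) writes $\wRH_g^n = [(E_1 \times E_2 \smallsetminus \Delta)/(\Gm \times \GL_2)]$ with $E_1 = \Sym^{2n}(V^\vee) \otimes \det(V)^{\otimes n-1}$ and $E_2 = \chi^{\otimes -2} \otimes \Sym^{2g+2-2n}(V^\vee) \otimes \det(V)^{\otimes g-n}$. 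Every point of the complement of $\Delta$ has both coordinates nonzero, so $(f,h) \mapsto ([f],[h])$ realizes $\wRH_g^n$ as a $\Gm^{\times 2}$-torsor over
$$
Y := \Bigl[\bigl(\P(\Sym^{2n}(V^\vee)) \times \P(\Sym^{2g+2-2n}(V^\vee)) \smallsetminus \uD\bigr)/(\Gm \times \GL_2)\Bigr].
$$

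First I would compute $\CH^*(Y)$. As $\Gm$ acts trivially on the projectivized base, $\CH^*(Y) = \CH^*_{\GL_2}(\P(\Sym^{2n}(V^\vee)) \times \P(\Sym^{2g+2-2n}(V^\vee)) \smallsetminus \uD)[t]$, and the $\GL_2$-factor is supplied by Theorem~\ref{thm: chow P x P} with $a = n$, $b = g+1-n$; this gives generators $\xi_{2n}, \xi_{2g+2-2n}, c_1, c_2, t$ subject to the ideal $J$ displayed there. Next I would apply the torsor step (the mechanism of \cite[Proposition~3.5]{CLII}): a $\Gm^{\times 2}$-torsor attached to a pair of line bundles kills their equivariant first Chern classes, so $\CH^*(\wRH_g^n)$ is $\CH^*(Y)$ modulo the classes of the two tautological bundles $\cO_{\P(E_i)}(1)$. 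Using $\cO_{\P(U \otimes L)}(1) \cong \cO_{\P(U)}(1) \otimes L^{-1}$ to account for the twists, these are
$$
c_1^{\Gm\times\GL_2}(\cO_{\P(E_1)}(1)) = \xi_{2n} - (n-1)c_1, \qquad c_1^{\Gm\times\GL_2}(\cO_{\P(E_2)}(1)) = \xi_{2g+2-2n} + 2t - (g-n)c_1,
$$
the term $2t$ being exactly the contribution of the $\chi^{\otimes -2}$ twist on $E_2$.

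These two relations let me eliminate the projective generators integrally, via $\xi_{2n} = (n-1)c_1$ and $\xi_{2g+2-2n} = (g-n)c_1 - 2t$ (note that $t$ itself cannot be eliminated without inverting $2$, which is why the stated presentation retains $t$ and discards the $\xi$'s). Substituting these into the seven generators of $J$ produces seven relations in $\Z[t,c_1,c_2]$: I expect the first to collapse to $2(2n-1)c_1$, the two quadratic relations to reproduce the relation with leading term $(n-1)(n-2)c_1^2$ and the one with leading term $4t^2$ respectively, the mixed relation $2b\xi_{2a} + 2a\xi_{2b} - 4abc_1$ to become $4nt + 2(g+1)c_1$ once the $n^2$- and $gn$-terms cancel, and the relation $\xi_{2a}\xi_{2b} - 4abc_2$ to yield $-2(n-1)tc_1 + (n-1)(g-n)c_1^2 - 4n(g+1-n)c_2$ directly.

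The one point requiring genuine care --- and the only place I anticipate as a real obstacle --- is reconciling the substituted relations with the exact generating set listed in the statement, since several will match not on the nose but only modulo the ideal. Concretely, the substituted third relation differs from $4(2g+1-2n)t + 4gc_1$ by the first relation $2(2n-1)c_1$; the substituted $4t^2$-relation differs from the stated one by $t \cdot 2(2n-1)c_1$; and, most delicately, the top relation $2ab(2a-1)(2b-1)(4c_2 - c_1^2)$ becomes $8n(2n-1)(g+1-n)(2g+1-2n)c_2$ only after discarding its $c_1^2$-part, which equals $n(g+1-n)(2g+1-2n)\cdot c_1 \cdot \bigl(2(2n-1)c_1\bigr)$ and hence lies in the ideal generated by the first relation. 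Once these absorptions are verified, the presentation reduces to $\Z[t,c_1,c_2]/I$ with $I$ as claimed.
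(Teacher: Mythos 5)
Your proposal is correct and follows essentially the same route as the paper: it uses the presentation from Theorem~\ref{thm: pres wRHg for g odd}(ii) to exhibit $\wRH_g^n$ as a (double) $\Gm$-torsor over $\bigl[\bigl(\P(\Sym^{2n}(V^\vee))\times\P(\Sym^{2g+2-2n}(V^\vee))\smallsetminus\uD\bigr)/(\Gm\times\GL_2)\bigr]$ relative to the classes $\xi_{2n}-(n-1)c_1$ and $\xi_{2g+2-2n}+2t-(g-n)c_1$, then applies the $\Gm$-bundle formula to the ring of Theorem~\ref{thm: chow P x P} with $a=n$, $b=g+1-n$, and simplifies using the first relation. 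Your explicit bookkeeping of which substituted relations match the stated generators only modulo $2(2n-1)c_1$ (the third, fourth, and seventh) is exactly the ``some simplifications using the first relation'' that the paper's proof invokes, and your computations there are correct.
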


We give an interpretation of the generators of $\CH^*(\wRH_g^n)$ for $g$ odd in~\S\ref{sec: interpretation of the generators tilde g odd}.

The computation of the Chow rings of the stacks $\wRH_{g}^{\frac{g+1}{2}}$ 
using our presentations in Theorem \ref{thm: pres g+1/2} is not carried out in this paper. Nevertheless, it would be very interesting, 
since neither the Chow ring of the classifying stack $BH$ nor the representations $A$ and $B$ 
have appeared previously in the literature.

\subsection{Conventions}
We work over an algebraically closed field $k$. For fixed $g$ and $n$ as in the definition of $\RH_g^n$ (or $\wRH_g^n$), we assume throughout that the characteristic of $k$ is either $0$ or greater than $2g+2-2n$, unless otherwise stated.

\subsection{Acknowledgements}
We thank Dan Abramovich, Samir Canning, Brendan Hassett, Aitor Iribar Lopez, Andrea Di Lorenzo, Rahul Pandharipande, Michele Pernice, Dhruv Ranganathan, Bernardo Tarini and Suichu Zhang for several related conversations. Part of this work was carried out at the University of Cambridge during the second author’s visit, and he gratefully acknowledges its warm hospitality and excellent research environment. A.C. is supported by SNF grant P500PT-222363. A.L. is supported by NSF grant 2401358.

\section{Presentation of $\RH_g^n$ for $g$ even}\label{sec: pres for g even}

In this section we prove Theorem \ref{thm: pres RH}.

\begin{proof}[Proof of Theorem \ref{thm: pres RH}]
    By \cite[Equation 4]{CLI}, \cite[Lemma 1.1]{CLII} and \cite[Theorem 3.1]{EF09} (or \cite[Corollary 4.7]{AV04}), we have a cartesian diagram
    \[
    \begin{tikzcd}[column sep=small]
    \RH_g^{n} \arrow[r]\arrow[d] & \cH_g = \bigg[ \frac{\Sym^{2g+2}(V^\vee) \otimes \det(V)^{\otimes g} \smallsetminus \Delta}{\GL_2} \bigg] \arrow[r]\arrow[d] & B \GL_2 \arrow[d]\\ 
    \mathcal{D}_{2n,2g+2-2n}  = \Bigg[ \frac{(\chi^{(1)})^{-1} \otimes W_n \times (\chi^{(2)})^{-1} \otimes W_{g+1-n} \smallsetminus \Delta}{ \Gm^{(1)} \times \Gm^{(2)} \times \PGL_2}  \Bigg] \arrow[r] & \mathcal{D}_{2g+2} = \bigg[ \frac{\chi^{\otimes -1} \otimes W_{g+1} \smallsetminus \Delta}{\mathbb{G}_m \times \mathrm{PGL}_2} \bigg] \arrow[r] & B(\Gm \times \PGL_2)
    \end{tikzcd}
\]
where both squares are cartesian, the rightmost map is induced by the  homomorphism $A \mapsto (\det(A),[A])$. 
Note that the bottom composition factors as
\[
\cD_{2n, 2g+2-2n} \to B(\Gm \times \Gm \times \PGL_2) \to B (\Gm \times \PGL_2),
\]
where the last map is induced by $(\lambda_1,\lambda_2,[B]) \mapsto (\lambda_1 \lambda_2, [B])$.  

Moreover, we have a fiber product diagram
\begin{equation}\label{diagram: square of groups}
\begin{tikzcd}[row sep=3em]
B(\Gm \times \GL_2) 
  \arrow[r, "{(\lambda,A)\mapsto A}"] 
  \arrow[d, "{(\lambda,A) \mapsto (\lambda, \det(A)\lambda^{-1},[A])}"'] 
  & B\GL_2 
    \arrow[d, "{A \mapsto (\det(A), [A])}", yshift=-0.7ex] \\
B(\Gm^2 \times \PGL_2) 
  \arrow[r, "{(\lambda_1,\lambda_2,[B]) \mapsto (\lambda_1 \lambda_2,[B])}"', yshift=-1.2ex] 
  & B(\Gm \times \PGL_2)
\end{tikzcd}
\end{equation}
The conclusion follows immediately from this.
\end{proof}

\section{The Chow ring of $\RH_g^n$ for $g$ even}\label{sec: Chow RHg even}

In this section we prove Theorems~\ref{thm: chow P x P} and~\ref{thm: Chow RH g even}. Using Theorem~\ref{thm: pres RH}, the second follows from the first and, hence we start with proving it assuming Theorem~\ref{thm: chow P x P}, whose proof is carried out in~\S\ref{subsubsec: Chow P x P}.
	
	\begin{proof}[Proof of Theorem~\ref{thm: Chow RH g even} assuming Theorem ~\ref{thm: chow P x P}]
		By Theorem~\ref{thm: pres RH}, we know that $\RH_g^n$ is a $\Gm^2$-torsor over the product of $B\Gm$ and the quotient of $({\P(\Sym^{2n}(V^\vee)) \times \P(\Sym^{2g+2-2n}(V^\vee)))\setminus\uD}$ by $\GL_2$. Recall that we denote by $t$ the first Chern class of the standard $\Gm$-character $\chi$, and that $\CH^*(\cX\times B\Gm)\cong\CH^*(\cX)\otimes_{\mathbb{Z}}\Z[t]$ as rings, for every smooth quotient stack $\cX$, see~\cite[Lemma 10]{Oe18}.
        
        Then, the torsor is the composite of two $\Gm$-torsors relative to $-\xi_{2n}-t+nc_1$ and $-\xi_{2g+2-2n}+t+(g-n)c_1$, respectively. The statement follows from applying the $\Gm$-bundle formula twice, and some simplifications using the first relation in the statement.
	\end{proof} 
	
	\subsection{Proof of Theorem~\ref{thm: chow P x P}}\label{subsubsec: Chow P x P}
	In this section we prove Theorem ~\ref{thm: chow P x P}. We start with some notation.
	
	\begin{notation}
		For every $r\geq1$, we denote by $\P^r$ the projective bundle $\P(\Sym^r(V^{\vee}))$. We denote the class of $\cO_{\P^r}(1)$ by $\xi_r$, both in the $\GL_2$ and $\Gm^2$-equivariant Chow ring, when regarding $\Gm^2$ as the maximal torus of $\GL_2$ consisting of diagonal matrices. 
	\end{notation}
	
    An application of the excision sequence together with the projection formula yields the following lemma.
	
	\begin{lemma}\label{lem: generators RH g even}
		We have
		\[
		\CH_{\GL_2}^*(\P^{2a}\times\P^{2b})\cong\frac{\Z[\xi_{2a},\xi_{2b},c_1,c_2]}{(p_{2a}(\xi_{2a}),p_{2b}(\xi_{2b}))}
		\]
		where $p_{2j}(\xi_{2j})$ are the monic polynomials of degree $2j+1$ coming from the projective bundle formula applied to the single factors.
		
		Moreover, if we denote by $J$ the image under the $\GL_2$-equivariant pushforward along the inclusion $\uD\hookrightarrow\P^{2a}\times\P^{2b}$, then
		\[
		\CH_{\GL_2}^*(\P^{2a}\times\P^{2b}\setminus\uD)\cong\frac{\Z[\xi_{2a},\xi_{2b},c_1,c_2]}{(p_{2a}(\xi_{2a}),p_{2b}(\xi_{2b}))+J}.
		\]
	\end{lemma}
	
	To compute the ideal $J$, first notice that $\uD$ has three $\GL_2$-invariant irreducible components $\uD_1$, $\uD_2$ and $\uD_{1,2}$, parametrizing pairs $(f,g)$ such that $f$ is not square free, $g$ is not square free, and that $f,g$ share a common factor, respectively. Therefore, it is enough to compute the image of the $\GL_2$-equivariant pushforward along the inclusions $j_1:\uD_1\hookrightarrow\P^{2a}\times\P^{2b}$, $j_2:\uD_2\hookrightarrow\P^{2a}\times\P^{2b}$ and $j_{1,2}:\uD_{1,2}\hookrightarrow\P^{2a}\times\P^{2b}$. To do so, we construct a Chow envelope of the three irreducible components of $\uD$.
	
	There are squaring and multiplication maps
	\[
	\begin{tikzcd}[row sep=tiny]
		F_r:\P^r\times(\P^{2a-2r}\times\P^{2b})\arrow[r] & \P^{2a}\times\P^{2b}, & (h,(f,g))\arrow[r,mapsto] & (h^2f,g),\\
		G_r:\P^r\times(\P^{2a}\times\P^{2b-2r})\arrow[r] & \P^{2a}\times\P^{2b}, & (h,(f,g))\arrow[r,mapsto] & (f,h^2g),\\
		M_r:\P^r\times(\P^{2a-r}\times\P^{2b-r})\arrow[r] & \P^{2a}\times\P^{2b}, & (h,(f,g))\arrow[r,mapsto] & (hf,hg).
	\end{tikzcd}
	\]
	that are $\GL_2$-equivariant and factor through $\uD_1$, $\uD_2$ and $\uD_{1,2}$, respectively. Denote by $F$, $G$ and $M$ the disjoint union of the maps $F_r$, $G_r$ and $M_r$, respectively.
	\begin{lemma}\label{lem: Chow envelopes n>1 GL2 case}
		The morphisms $F$, $G$ and $M$ form $\GL_2$-equivariant Chow envelopes of $\uD_1$, $\uD_2$ and $\uD_{1,2}$, respectively, and the $\GL_2$-equivariant pushforwards are surjective.
	\end{lemma}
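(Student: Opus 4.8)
The plan is to verify the defining property of a proper $\GL_2$-equivariant Chow envelope—that every subvariety of the target admits a lift mapping \emph{birationally} onto it—and then invoke the standard fact that an equivariant envelope induces a surjection on equivariant Chow groups. Properness is automatic, since every map in sight is a morphism of projective $k$-varieties and each of $F,G,M$ is a finite disjoint union of such; $\GL_2$-equivariance holds because multiplication and squaring of binary forms commute with the substitution action of $\GL_2$, and the factoring through $\uD_1,\uD_2,\uD_{1,2}$ is built into the constructions. The only real content is therefore the existence of birational lifts, which I would establish stratum by stratum using a canonical factorization of the generic point.

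For $\uD_1$ (and $\uD_2$ by the symmetry exchanging the two factors), let $Z \subseteq \uD_1$ be a subvariety with generic point corresponding to a pair $(f,g)$ over $K = k(Z)$. Under the running hypothesis on $\mathrm{char}(k)$, a binary form fails to be square-free exactly when its discriminant vanishes, a condition manifestly stable under field extension; hence $f$ is genuinely non-square-free already over $K$. Writing $f = \prod_i p_i^{m_i}$ with $p_i \in K[X,Y]$ irreducible, I would set $h := \prod_i p_i^{\lfloor m_i/2\rfloor}$ and $\tilde f := f/h^2$, so that $h$ is the unique (up to scalar) maximal square divisor of $f$, of some degree $r$. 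This produces a rational section of $F_r$ over $Z$, sending $(f,g)$ to $(h,(\tilde f,g))$, defined on the dense open locus of constant factorization type; its closure $Z' \subseteq \P^r\times(\P^{2a-2r}\times\P^{2b})$ then maps onto $Z$. The map $Z'\to Z$ is birational because the fiber of $F_r$ over a general point of $Z$ consists of the degree-$r$ forms $h$ with $h^2 \mid f$, and maximality forces this to be the single $K$-point $h$.

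For $\uD_{1,2}$ the argument is parallel, with the maximal square divisor replaced by the greatest common divisor: the generic point $(f,g)$ of a subvariety $Z \subseteq \uD_{1,2}$ has $h := \gcd(f,g) \in K[X,Y]$ of some positive degree $r$, and the factorization $(f,g) = (h\tilde f,\, h\tilde g)$ with $\gcd(\tilde f,\tilde g)=1$ furnishes a rational section of $M_r$. Birationality again follows because any common divisor of $f$ and $g$ of degree $r=\deg\gcd(f,g)$ coincides with $h$ up to scalar.

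The step I expect to be the main (if not deep) obstacle is pinning down the correct graded index $r$ and checking birationality, rather than mere generic finiteness, onto the deeper strata. A single map such as $F_1$ fails to be birational there: over the locus where $f$ has two distinct double roots, say $f=\ell_1^2\ell_2^2\tilde f$, there are two ways of writing $f=h^2\tilde f$ with $\deg h = 1$, so $F_1$ is a degree-$2$ cover of this stratum. This is precisely why the statement uses the full disjoint unions $F=\bigsqcup_r F_r$, $G$, $M$: the component indexed by the degree of the square part (respectively of the $\gcd$) of the generic point is the unique one that is birational onto the given stratum, and the hypothesis on $\mathrm{char}(k)$ guarantees that the discriminant genuinely detects the repeated part. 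Finally, surjectivity of the equivariant pushforwards follows formally: approximating $\CH^*_{\GL_2}$ by the Chow groups of the finite-dimensional Borel approximations $(\P^{2a}\times\P^{2b}\times U)/\GL_2$ turns each equivariant envelope into an ordinary envelope, and ordinary envelopes induce surjective proper pushforwards, since the class of every subvariety is the pushforward of the class of its birational lift.
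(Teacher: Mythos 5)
Your argument is correct and is essentially the paper's own proof: the paper disposes of this lemma in one line by citing \cite[Lemma 5.2]{CLI}, whose content is precisely your canonical-factorization argument (maximal square divisor, resp.\ greatest common divisor, extracted at the generic point of $Z$, with the index $r$ selecting the component of $F$, $G$, $M$), and the characteristic hypothesis enters exactly where you place it, to guarantee separability so that geometric failure of square-freeness is already visible over $k(Z)$ and the square part is defined there. The only step to phrase more carefully is the last one: passing to the Borel approximations $(\P^{2a}\times\P^{2b}\times U)/\GL_2$ is \emph{not} formal for an arbitrary envelope, but works here because your lifts are canonical and hence take $\GL_2$-invariant subvarieties to $\GL_2$-invariant lifts, which, combined with the connectedness of $\GL_2$, is the standard equivariant-envelope criterion (as in \cite{EF09}) that the cited lemma relies on as well.
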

	\begin{proof}
		The proof is the same as~\cite[Lemma 5.2]{CLI}. This uses the assumption on the characteristic of the base field.
	\end{proof}
	
	We are left with computing the images of $F_*$, $G_*$ and $M_*$. The first two are essentially already known, and we recall the result here.
	
	\begin{lemma}\label{lem: image F and G}
		The image of $F_*$, equivalently of $i_{1*}$, is the ideal generated by
		\begin{align*}
			{F_1}_*(1) &= 2(2a-1)\xi_{2a}-2a(2a-1)c_1, \qquad {F_1}_*(\xi_1) = \xi_{2a}^2-c_1\xi_{2a}-2a(2a-2)c_2.
		\end{align*}
		Similarly, the image of $G_*$, equivalently of ${i_2}_*$ is the ideal generated by
		\begin{align*}
			{G_1}_*(1) &= 2(2b-1)\xi_{2b}-2b(2b-1)c_1, \qquad {G_1}_*(\xi_1) = \xi_{2b}^2-c_1\xi_{2b}-2b(2b-2)c_2.
		\end{align*}
	\end{lemma}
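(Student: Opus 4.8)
The plan is to deduce the lemma from the single-factor discriminant computation of~\cite{CLI} together with the product structure of $\CH^*_{\GL_2}(\P^{2a}\times\P^{2b})$. First I would observe that each $F_r$ is the identity on the second factor and on $B\GL_2$: writing $\Sigma^{(a)}_r\colon \P^r\times\P^{2a-2r}\to\P^{2a}$, $(h,f)\mapsto h^2f$, we have $F_r=\Sigma^{(a)}_r\times\mathrm{id}_{\P^{2b}}$. By the projective bundle formula, $\CH^*_{\GL_2}(\P^{2a}\times\P^{2b})$ is free over $\CH^*(B\GL_2)=\Z[c_1,c_2]$ and splits as the external tensor product $\CH^*_{\GL_2}(\P^{2a})\otimes_{\Z[c_1,c_2]}\CH^*_{\GL_2}(\P^{2b})$. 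Since $F_{r*}(\alpha\boxtimes\beta)=(\Sigma^{(a)}_r)_*(\alpha)\boxtimes\beta$, the image of $F_*$ (which by Lemma~\ref{lem: Chow envelopes n>1 GL2 case} equals the image of $i_{1*}$) is the ideal generated, via pullback along the first projection, by the image of $\bigsqcup_r(\Sigma^{(a)}_r)_*$ in $\CH^*_{\GL_2}(\P^{2a})$. This reduces the statement to identifying the image of the pushforward along the single-variable squaring--multiplication map into $\P^{2a}$, which is exactly the discriminant computation carried out in~\cite[Lemma 5.2 and its proof]{CLI}.

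Next I would recall that computation in the present notation. The Chow envelope $\bigsqcup_r \Sigma^{(a)}_r$ of the non-squarefree locus $\uD_1^{(a)}\subset\P^{2a}$ reduces the image to the $r=1$ contribution: for $r\ge 2$ the map $\Sigma^{(a)}_r$ factors, after peeling off a single linear factor, through $\Sigma^{(a)}_1$, so it adds nothing to the ideal. On the source of $\Sigma^{(a)}_1\colon\P^1\times\P^{2a-2}\to\P^{2a}$ the tautological identity $\cO_{\P^{2a}}(-1)\mapsto\cO_{\P^1}(-2)\boxtimes\cO_{\P^{2a-2}}(-1)$ gives $(\Sigma^{(a)}_1)^*\xi_{2a}=2\xi_1+\zeta$, where $\zeta=c_1(\cO_{\P^{2a-2}}(1))$. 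Writing $\zeta=(\Sigma^{(a)}_1)^*\xi_{2a}-2\xi_1$ and using the quadratic Grothendieck relation satisfied by $\xi_1$ on $\P^1=\P(V^\vee)$, the projection formula shows that the image of $(\Sigma^{(a)}_1)_*$ is generated as an ideal by $(\Sigma^{(a)}_1)_*(1)$ and $(\Sigma^{(a)}_1)_*(\xi_1)$. Since $\Sigma^{(a)}_1$ is birational onto the discriminant divisor, $(\Sigma^{(a)}_1)_*(1)=[\uD_1^{(a)}]$, whose nonequivariant degree is $2(2a-1)$; this equivariant class together with $(\Sigma^{(a)}_1)_*(\xi_1)$ are then computed via the Grothendieck relation on $\P^{2a}$, yielding the two stated expressions. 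The formulas for $G_*$ follow by the symmetry $a\leftrightarrow b$ and interchanging the two factors.

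I expect the main obstacle to be the explicit evaluation of the two pushforwards $(\Sigma^{(a)}_1)_*(1)$ and $(\Sigma^{(a)}_1)_*(\xi_1)$ with the correct equivariant correction terms. The leading $\xi_{2a}$-coefficients are pinned down by the degree of the discriminant hypersurface, but the $c_1$- and $c_2$-coefficients (namely $-2a(2a-1)c_1$ and $-2a(2a-2)c_2$) encode the $\GL_2$-equivariant structure and require careful Chern-class bookkeeping, equivalently a weight computation at the fixed loci of the maximal torus; this is precisely the content imported from~\cite{CLI}. A secondary point to verify is that the higher maps $\Sigma^{(a)}_r$ with $r\ge2$ and the higher module generators $\xi_1^{\,i},\zeta^{\,j}$ genuinely produce no new ideal generators, which is where the quadratic relation on $\P^1$ and the factorization through $\Sigma^{(a)}_1$ are essential.
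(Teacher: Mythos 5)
Your proposal is correct and takes essentially the same route as the paper: the paper's entire proof is that the lemma "follows immediately from \cite[Proposition 4.2, Lemma 4.3]{EF09}", i.e.\ the single-factor $\GL_2$-equivariant discriminant computation, the product case being exactly the external-product/projection-formula reduction you spell out, with the identification of $\mathrm{Im}(F_*)$ and $\mathrm{Im}(i_{1*})$ coming from the Chow-envelope lemma as you say. The only discrepancy is attribution: the explicit $\GL_2$-equivariant classes $F_{1*}(1)$, $F_{1*}(\xi_1)$ and the reduction of the maps with $r\geq 2$ to the $r=1$ case are imported from Edidin--Fulghesu \cite{EF09}, not from \cite{CLI}, whose Lemma 5.2 is the Chow-envelope statement (used for the preceding lemma) and whose Section 5 pushforward computations are carried out $\PGL_2$-equivariantly rather than $\GL_2$-equivariantly.
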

	\begin{proof}
		It follows immediately from~\cite[Proposition 4.2, Lemma 4.3]{EF09}.
	\end{proof}
	
    Notice that $p_{2j}(\xi_{2j})$ is monic of degree $\geq\min(2a+1,2b+1)\geq3$ in $\xi_{2j}$, hence the classes in Lemma~\ref{lem: image F and G} lift uniquely to $\Z[\xi_{2a},\xi_{2b},c_1,c_2]$.
	
	\begin{lemma}\label{lem: superfluous polynomials}
		The polynomials $p_{2j}(\xi_{2j})$ are contained in the ideal of $\Z[\xi_{2a},\xi_{2b},c_1,c_2]$ generated by the classes in Lemma~\ref{lem: image F and G}.
	\end{lemma}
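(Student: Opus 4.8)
The plan is to prove the statement for $p_{2a}(\xi_{2a})$; the case of $p_{2b}(\xi_{2b})$ is identical after interchanging the roles of the maps $F$ and $G$. Since $p_{2a}$ involves only $\xi_{2a},c_1,c_2$, it suffices to show that it lies in the ideal $({F_1}_*(1),{F_1}_*(\xi_1))$ of $\Z[\xi_{2a},c_1,c_2]$, as this containment is preserved in the larger ring $\Z[\xi_{2a},\xi_{2b},c_1,c_2]$. Writing $r_1,r_2$ for the Chern roots of $V$ (so $c_1=r_1+r_2$, $c_2=r_1r_2$), the projective bundle formula gives the explicit factorization
\[
p_{2a}(\xi_{2a})=\prod_{i=0}^{2a}\bigl(\xi_{2a}-\mu_i\bigr),\qquad \mu_i:=i\,r_1+(2a-i)\,r_2 ,
\]
whose middle factor ($i=a$) is the linear form $\xi_{2a}-a c_1$. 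The starting observation is that, by Lemma~\ref{lem: image F and G}, the class ${F_1}_*(1)=2(2a-1)(\xi_{2a}-a c_1)$ is precisely $2(2a-1)$ times this middle factor, while $Q:={F_1}_*(\xi_1)=\xi_{2a}^2-c_1\xi_{2a}-4a(a-1)c_2$ is a monic quadratic in $\xi_{2a}$.

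First I reduce $p_{2a}$ modulo $Q$, obtaining a remainder $A\xi_{2a}+B$ with $A,B\in\Z[c_1,c_2]$ and quotient $s$, so that $p_{2a}=Qs+(A\xi_{2a}+B)$. In the free rank-two $\Z[c_1,c_2]$-algebra $R:=\Z[c_1,c_2][\xi_{2a}]/(Q)$, the desired containment is equivalent to $A\xi_{2a}+B$ lying in the principal ideal generated by ${F_1}_*(1)$. Expanding $(u+v\,\xi_{2a})\cdot 2(2a-1)(\xi_{2a}-a c_1)$ in $R$ and matching coefficients turns this into an inhomogeneous $2\times 2$ linear system for $u,v\in\Z[c_1,c_2]$, whose coefficient matrix has determinant $a(a-1)(4c_2-c_1^2)$. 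For $a=1$ the determinant vanishes and the system degenerates to a single condition which one checks by hand: there $p_{2a}\equiv 4c_2(\xi_{2a}-c_1)=2c_2\,{F_1}_*(1)$ modulo $Q$. For $a\ge 2$ the containment reduces, via Cramer's rule, to two integrality statements: that $2(2a-1)$ divides both $A$ and $B$, and that the resulting Cramer quotient is polynomial, which amounts to $2(2a-1)\mid s(a c_1)$ (the value $u$ is then automatically integral from the first equation).

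The main work, and the only real obstacle, is verifying these divisibilities, which I carry out prime power by prime power. Since $2a-1$ is odd we have $2(2a-1)=2\prod_{\ell}\ell^{k_\ell}$ with the $\ell$ odd. For an odd prime power $\ell^{k}\mid 2a-1$ we have $2a\equiv 1\pmod{\ell^{k}}$, hence $4a(a-1)\equiv-1$ and $Q\equiv\xi_{2a}^2-c_1\xi_{2a}+c_2=(\xi_{2a}-r_1)(\xi_{2a}-r_2)\pmod{\ell^{k}}$; simultaneously the pair $\{\mu_0,\mu_1\}$ reduces to $\{r_2,r_1\}$ modulo $\ell^k$, so the $i=0,1$ factors of $p_{2a}$ recombine into $Q$ and, $Q$ being monic, force $\ell^{k}\mid A$ and $\ell^{k}\mid B$. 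Writing $s\equiv\prod_{i\ge 2}(\xi_{2a}-\mu_i)\pmod{\ell^{k}}$ and using the identity $a c_1-\mu_i=(a-i)(r_1-r_2)$, the value $s(a c_1)\equiv (r_1-r_2)^{2a-1}\prod_{i=2}^{2a}(a-i)$ vanishes modulo $\ell^{k}$ because of the factor with $i=a$. The prime $2$ is handled directly from the reductions $Q\equiv\xi_{2a}(\xi_{2a}+c_1)$ and $p_{2a}\equiv\xi_{2a}^{a+1}(\xi_{2a}+c_1)^{a}\pmod 2$, which give $2\mid A,B$ and $s\equiv\xi_{2a}^{a}(\xi_{2a}+c_1)^{a-1}$, whence $s(a c_1)\equiv a^{a}(a+1)^{a-1}c_1^{2a-1}\equiv 0\pmod 2$ for $a\ge 2$. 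Combining all prime powers by the Chinese remainder theorem yields the three divisibilities, so the linear system is solvable over $\Z[c_1,c_2]$ and $p_{2a}\in({F_1}_*(1),{F_1}_*(\xi_1))$; the symmetric argument for $\xi_{2b}$ and the $G$-classes disposes of $p_{2b}$.
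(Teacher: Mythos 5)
Your proof is correct, but it takes a genuinely different route from the paper's. The paper's argument is a short reduction: by~\cite[Lemma 3.3]{CLI} it suffices to check the containment after restricting to the maximal torus $\Gm^2\subset\GL_2$, where $\widetilde{p}_{2a}(\xi_{2a})$ factors as the product of the $2a+1$ coordinate hyperplane classes; by~\cite[Lemma 4.3]{EF09} the ideal generated by the torus-equivariant $F$-classes already contains the product of the classes of two hyperplanes of $\P^{2a}$, and since $2a+1\geq 3$, the polynomial $\widetilde{p}_{2a}$ is a multiple of such a product. You instead stay in $\Z[\xi_{2a},c_1,c_2]$: you divide $p_{2a}$ by the monic quadratic $Q={F_1}_*(\xi_1)$, translate the membership into a $2\times2$ linear system over $\Z[c_1,c_2]$ (using that $\Z[c_1,c_2][\xi_{2a}]/(Q)$ is free of rank two), reduce solvability --- via the evaluation identity $ac_1A+B=a(a-1)(4c_2-c_1^2)\,s(ac_1)$, which you use implicitly when asserting that the Cramer quotient condition "amounts to" $2(2a-1)\mid s(ac_1)$ --- to the divisibilities $2(2a-1)\mid A$ and $2(2a-1)\mid s(ac_1)$, and verify these prime power by prime power; I checked the congruences for odd $\ell^k\mid 2a-1$ (where $4a(a-1)\equiv-1$ makes the $i=0,1$ factors recombine into $Q$) and for the prime $2$, as well as the hand computation at $a=1$, and they are all right. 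Both proofs are valid; the paper's is shorter but leans on two external results (torus descent for ideal membership and the EF09 computation), while yours is self-contained given the formulas of Lemma~\ref{lem: image F and G}, proves the slightly stronger statement that $p_{2a}$ lies in the ideal generated by the two $F$-classes alone, and makes explicit the integral arithmetic responsible for the containment. Two cosmetic points: the determinant $a(a-1)(4c_2-c_1^2)$ you quote is that of the system after removing the overall scalar $2(2a-1)$, and the divisibility of $B$ is automatic once $2(2a-1)$ divides $A$ and $s(ac_1)$ (again by the evaluation identity), so it need not be listed as a separate condition.
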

     In particular, they are superfluous in Lemma~\ref{lem: generators RH g even} (and we will sometimes think of$J$ as an ideal in $\Z[\xi_{2a},\xi_{2b},c_1,c_2]$).
	\begin{proof}
		Let $\Gm^2$ be the maximal torus in $\GL_2$ consisting of diagonal matrices, with two standard characters $t_1$, $t_2$, induced by the two projections. The Chow ring of the quotient $[\P^{2a}\times\P^{2b}/\Gm^2]$ is $\Z[\xi_{2a},\xi_{2b},t_1,t_2]/(\widetilde{p}_{2a}(\xi_{2a}),\widetilde{p}_{2b}(\xi_{2j}))$, where the two polynomials $\widetilde{p}_{2j}(\xi_{2j})$ are the $\Gm^2$-analogues of $p_{2j}(\xi_{2j})$. By~\cite[Lemma 3.3]{CLI}, it is enough to prove that $\widetilde{p}_{2a}(\xi_{2a})$ is contained in the ideal of $\Z[\xi_{2a},\xi_{2b},t_1,t_2]$ generated by
		\begin{align*}
			(F_1^{\Gm^2})_*(1) &= 2(2a-1)\xi_{2a}-2a(2a-1)(t_1+t_2), \qquad (F_1^{\Gm^2})_*(\xi_1) = \xi_{2a}^2-(t_1+t_2)\xi-2a(2a-2)t_1t_2.
		\end{align*}
		and similarly for $\widetilde{p}_{2b}(\xi_{2b})$. By~\cite[Lemma 4.3]{EF09}, the ideal above contains the product of the $\Gm^2$-equivariant classes of two hyperplanes of $\P^{2a}$. Since $\widetilde{p}_{2a}(\xi_{2a})$ is the product of the classes of the $2a+1$ coordinate hyperplanes, it is contained in the ideal above.
	\end{proof}
	
	We are left with computing the image of $M_{r*}$ for every $r$.
	
	\begin{lemma}\label{lem: generators M}
		The image of $M_*$ is generated by $M_{r*}(1)$ for $1\leq r\leq\min(2a,2b)$ and $M_{1*}(\xi_1)$.
	\end{lemma}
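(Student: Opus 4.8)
The plan is to first cut down, for each $r$, to a small generating set of $\mathrm{im}(M_{r*})$ (viewed as an ideal via the projection formula), and then to collapse this set down to the asserted generators using a geometric ``factor–peeling'' square together with a double induction. For the first step, note that the hyperplane class of a product of two binary forms is the sum of the hyperplane classes of the factors, so $M_r^*\xi_{2a}=\xi_r+\xi_{2a-r}$ and $M_r^*\xi_{2b}=\xi_r+\xi_{2b-r}$. Thus the subring $M_r^*\CH^*_{\GL_2}(\P^{2a}\times\P^{2b})$ contains $\xi_{2a-r}$ and $\xi_{2b-r}$ modulo $\xi_r$, and $\CH^*_{\GL_2}(\P^{r}\times\P^{2a-r}\times\P^{2b-r})$ is generated as a module over it by the powers $1,\xi_r,\dots,\xi_r^{r}$ of the hyperplane class on the common-factor $\P^r$ (the top power being bounded by the degree-$(r{+}1)$ projective bundle relation, whose coefficients lie in $\Z[c_1,c_2]$). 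By the projection formula $\mathrm{im}(M_{r*})$ is then the ideal generated by $M_{r*}(\xi_r^i)$ for $0\le i\le r$. It therefore suffices to prove the nontrivial inclusion: each $M_{r*}(\xi_r^i)$ with $i\ge 1$ lies in the ideal $I_0$ generated by the $M_{s*}(1)$ and by $M_{1*}(\xi_1)$.

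The geometric input I would use is a commutative square keeping the target degrees fixed and lowering $r$. Writing a degree-$r$ form as $\ell h'$ with $\ell$ linear, consider
\[
\begin{tikzcd}[row sep=2.2em, column sep=3em]
\P^1\times\P^{r-1}\times\P^{2a-r}\times\P^{2b-r} \arrow[r, "\phi"] \arrow[d, "\rho"'] & \P^{r}\times\P^{2a-r}\times\P^{2b-r} \arrow[d, "M_r"]\\
\P^{r-1}\times\P^{2a-r+1}\times\P^{2b-r+1} \arrow[r, "M_{r-1}"'] & \P^{2a}\times\P^{2b}
\end{tikzcd}
\]
where $\phi(\ell,h',f,g)=(\ell h',f,g)$ and $\rho(\ell,h',f,g)=(h',\ell f,\ell g)$; both composites send $(\ell,h',f,g)$ to $(\ell h' f,\ell h' g)$, so the square commutes. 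The key point is that $\rho=\mathrm{id}_{\P^{r-1}}\times M_1^{(2a-r+1,\,2b-r+1)}$, the one-factor multiplication map in the remaining coordinates, while $\phi=m\times\mathrm{id}$ for the finite degree-$r$ multiplication map $m\colon\P^1\times\P^{r-1}\to\P^r$.

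Combining these, the equivariant pushforward along $m$ is triangular in the power of $\xi_r$ with unit leading coefficient: $m_*(\xi_1\,\xi_{r-1}^{i-1})=\xi_r^i+(\text{a }\Z[c_1,c_2]\text{-combination of lower powers of }\xi_r)$. Hence $\xi_r^i=\phi_*(\xi_1\xi_{r-1}^{i-1})$ up to such lower-power corrections, and applying $M_{r*}$ and the square yields
\[
M_{r*}(\xi_r^i)=M_{(r-1)*}\!\big(\xi_{r-1}^{\,i-1}\cdot\kappa\big)+(\text{a }\Z[c_1,c_2]\text{-combination of }M_{r*}(\xi_r^{j}),\ j<i),
\]
with $\kappa=(M_1^{(2a-r+1,2b-r+1)})_*(\xi_1)$ pulled back along the projection. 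Writing $\kappa$ as a polynomial in $\xi_{2a-r+1},\xi_{2b-r+1},c_1,c_2$ and substituting $\xi_{2a-r+1}=M_{r-1}^*\xi_{2a}-\xi_{r-1}$ and $\xi_{2b-r+1}=M_{r-1}^*\xi_{2b}-\xi_{r-1}$, the projection formula turns $M_{(r-1)*}(\xi_{r-1}^{i-1}\kappa)$ into an $R$-linear combination of the classes $M_{(r-1)*}(\xi_{r-1}^{\bullet})$, where $R=\CH^*_{\GL_2}(\P^{2a}\times\P^{2b})$. A double induction then closes the argument: outer on $r$ (base $r=1$, where the generators are exactly $M_{1*}(1),M_{1*}(\xi_1)\in I_0$) and inner on $i$, using that $M_{(r-1)*}(\xi_{r-1}^\bullet)\in I_0$ and $M_{r*}(\xi_r^{j})\in I_0$ for $j<i$ by the two hypotheses, and that $I_0$ is an ideal.

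The main obstacle is the control of the equivariant lower-order corrections. Non-equivariantly everything is clean (for instance $m_*(\xi_1\xi_{r-1}^{i-1})=\xi_r^i$ exactly, since $\xi_1^2=0$ on $\P^1$), but equivariantly $\xi_1^2\neq 0$ (it is a $\Z[c_1,c_2]$-combination of $1$ and $\xi_1$), so each pushforward acquires a tail of $c_1,c_2$-weighted terms. The argument goes through only because this tail is strictly triangular in the power of $\xi_r$ with \emph{unit} leading coefficient, which is exactly what makes the inner induction integral (no denominators) and keeps all corrections inside $I_0$. Verifying this triangularity, and organizing the induction so that the single extra generator $M_{1*}(\xi_1)$ (irreducible, since the base $\P^0$ is a point and cannot be peeled further) suffices for all $r$, is the technical heart; the explicit pushforward along the basic multiplication map $M_1$ needed to identify $\kappa$ is of the type computed in \cite[Proposition 4.2, Lemma 4.3]{EF09}, which I would invoke.
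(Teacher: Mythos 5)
Your proof is correct, and its skeleton coincides with the paper's: reduce via the projection formula to showing $M_{r*}(\xi_r^i)\in I_0$ for $0\le i\le r$, relate $M_r$ to $M_{r-1}$ through a commutative diagram that peels off a linear factor, and close with a lexicographic induction on $(r,i)$. Where you genuinely diverge is in the choice of diagram, and this has a real consequence. The paper fixes a coordinate $x_0$ and uses the closed immersions $\psi\colon(h,f,g)\mapsto(x_0h,f,g)$ and $\phi_r\colon(h,f,g)\mapsto(h,x_0f,x_0g)$, which satisfy $M_r\circ\psi=M_{r-1}\circ\phi_r$; these maps are only equivariant for the diagonal torus, so the paper must first invoke \cite[Proposition 3.4]{CLI} (or \cite[Lemma 2.1]{FuVi}) to transport the ideal-membership question to $\Gm^2$-equivariant Chow, after which the recursion is immediate because $\psi_*\psi^*$ is literally multiplication by the hyperplane class $h_0=\xi_r+\alpha$. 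Your square instead uses the honestly $\GL_2$-equivariant multiplication maps $\phi=m\times\mathrm{id}$ and $\rho=\mathrm{id}\times M_1$, so no torus reduction is needed; the price is that $\phi$ is finite of degree $r$ rather than a closed immersion, and the recursion rests on the triangularity claim $m_*(\xi_1\xi_{r-1}^{i-1})=\xi_r^i+\big(\text{lower powers of }\xi_r\text{ with coefficients in }\CH^{>0}(B\GL_2)\big)$. That claim is correct, and your justification is the right one: for $i\le r$ the coefficient of $\xi_r^i$ is an integer determined by the non-equivariant specialization, where multiplication by a fixed linear form embeds a linear $\P^{r-i}$ into $\P^r$ and gives exactly $\xi_r^i$, while every equivariant correction carries a positive-degree coefficient from $\CH^*(B\GL_2)$ and hence a strictly lower power of $\xi_r$. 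Both routes yield the same integral conclusion; yours is self-contained on the equivariance side (no appeal to a torus-reduction lemma and no choice of coordinates), whereas the paper's trades that for the simplicity of closed immersions, with no degree bookkeeping along a finite map.
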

	\begin{proof}
		As $M_r^*(\xi_{2j})=\xi_r+\xi_{2j-r}$ for $j\in\{a,b\}$, by the projection formula we know that the image of $M_*$ is generated by $M_{r*}(\xi_r^i)$ for $1\leq r\leq\min(2a,2b)$ and $0\leq i\leq r$. By~\cite[Proposition 3.4]{CLI} or~\cite[Lemma 2.1]{FuVi}, we can work $\Gm^2$-equivariantly, where $\Gm^2$ is the maximal torus of $\GL_2$ of diagonal matrices. We use the same strategy as in~\cite[Lemma 2.10]{Lan24}, and consider diagrams
		\[
		\begin{tikzcd}
			\P^{r-1}\times\P^{2a-r}\times\P^{2b-r}\arrow[r,"\psi",hookrightarrow]\arrow[rd,"\phi_r"',hookrightarrow] & \P^r\times\P^{2a-r}\times\P^{2b-r}\arrow[r,"M_r"] & \P^{2a}\times\P^{2b}\\
			& \P^{r-1}\times\P^{2a-r+1}\times\P^{2b-r+1}\arrow[ru,"M_{r-1}"']
		\end{tikzcd}
		\]
		for every $r\geq2$, where $\psi$ is the closed immersion of the first coordinate hyperplane, thus corresponding to $(h,f,g)\mapsto(x_0h,f,g)$ and $\phi$ sends $(h,f,g)$ to $(h,x_0f,x_0g)$. The diagram is commutative and all morphisms are $\Gm^2$-equivariant. Let $h_0:=\psi_*(1)$, then $h_0\xi_r^i=\xi_r^{i+1}+\alpha\xi_r^i$ for some $\alpha$ pulled back from $\CH^*(B\Gm^2)$ and the statement follows by lexicographic induction on $(r,i)$ from the recursive formula above.
	\end{proof}
	
	\begin{notation}\label{not: candidate ideal}
		Let $J_1$ be the ideal generated by $M_{1*}(1)$, $M_{1*}(\xi_1)$, $M_{2*}(1)$ and the classes in Lemma~\ref{lem: image F and G}. 
	\end{notation}
	
	\begin{lemma}\label{lem: ideal = candidate ideal}
		We have $J=J_1$.
	\end{lemma}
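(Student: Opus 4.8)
The plan is to prove the two inclusions $J_1\subseteq J$ and $J\subseteq J_1$ separately, the first being formal and the second being the real content. For $J_1\subseteq J$: by Lemma~\ref{lem: Chow envelopes n>1 GL2 case} the ideal $J$ is the sum of the images of $F_*$, $G_*$ and $M_*$, and each generator of $J_1$ listed in Notation~\ref{not: candidate ideal} is one of these pushforwards ($M_{1*}(1)$, $M_{1*}(\xi_1)$, $M_{2*}(1)$ lie in $\mathrm{im}(M_*)$, and the four classes of Lemma~\ref{lem: image F and G} lie in $\mathrm{im}(F_*)+\mathrm{im}(G_*)$). Conversely, combining Lemma~\ref{lem: Chow envelopes n>1 GL2 case} with Lemmas~\ref{lem: image F and G} and~\ref{lem: generators M} shows that, modulo the $F$- and $G$-classes (which already lie in $J_1$), the ideal $J$ is generated by $M_{1*}(\xi_1)$ together with the classes $M_{r*}(1)$ for $1\le r\le\min(2a,2b)$. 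Since $M_{1*}(1)$, $M_{1*}(\xi_1)$ and $M_{2*}(1)$ lie in $J_1$ by definition, everything reduces to proving $M_{r*}(1)\in J_1$ for $3\le r\le\min(2a,2b)$.

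I would obtain this from the commutative diagram in the proof of Lemma~\ref{lem: generators M}, working $\Gm^2$-equivariantly as justified there. From $M_r\circ\psi=M_{r-1}\circ\phi_r$, writing $\psi_*(1)=\xi_r+\alpha$ with $\alpha\in\CH^*(B\Gm^2)$, and noting that $\phi_{r*}(1)=(\xi_{2a-r+1}+\beta_a)(\xi_{2b-r+1}+\beta_b)$ is the product of the two hyperplane classes coming from multiplication by a coordinate on the $\P^{2a-r}$- and $\P^{2b-r}$-factors, the projection formula yields, for each $i\ge 0$,
\[
M_{r*}(\xi_r^{\,i+1})+\alpha\,M_{r*}(\xi_r^{\,i})
 = M_{(r-1)*}\!\big(\xi_{r-1}^{\,i}\,(\xi_{2a-r+1}+\beta_a)(\xi_{2b-r+1}+\beta_b)\big).
\]
Substituting $\xi_{2a-r+1}=M_{r-1}^*(\xi_{2a})-\xi_{r-1}$ and $\xi_{2b-r+1}=M_{r-1}^*(\xi_{2b})-\xi_{r-1}$ and applying the projection formula again expresses the right-hand side through the classes $M_{(r-1)*}(\xi_{r-1}^{\,j})$ multiplied by polynomials in $\xi_{2a},\xi_{2b}$ pulled back from the base. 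Because $\phi_{r*}(1)$ is a product of \emph{two} hyperplane classes, the resulting recursion is second order in $r$: it ties $M_{r*}(1)$ to the classes at levels $r-1$ and $r-2$. This is exactly why both $M_{1*}(1)$ and $M_{2*}(1)$ must appear as initial data, matching the two $M$-generators of $J_1$.

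I would then induct on $r$. At each step the squaring relations $\xi_{2a}^2\equiv c_1\xi_{2a}+2a(2a-2)c_2$ and $\xi_{2b}^2\equiv c_1\xi_{2b}+2b(2b-2)c_2$ of Lemma~\ref{lem: image F and G} (valid modulo $J_1$) collapse higher powers of $\xi_{2a},\xi_{2b}$, while the internal recursion of Lemma~\ref{lem: generators M} rewrites the auxiliary twisted classes $M_{s*}(\xi_s^{\,j})$ in terms of the permitted generators $M_{1*}(1)$, $M_{1*}(\xi_1)$ and $M_{2*}(1)$; the base cases $r=1,2$ lie in $J_1$ by definition. The main obstacle is precisely this bookkeeping: one must carry the full family $\{M_{r*}(\xi_r^{\,i})\}_i$ through the second-order recursion and check that, after reduction by the squaring relations, no generator outside $J_1$ is ever created. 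I expect the cleanest way to discharge it is to compute the classes $M_{r*}(1)$ (and $M_{r*}(\xi_r^{\,i})$) explicitly as polynomials in $\xi_{2a},\xi_{2b},c_1,c_2$ via torus localization, as in~\cite[Lemma 2.10]{Lan24}, and then verify the membership $M_{r*}(1)\in J_1$ by an explicit polynomial identity. This verification, rather than any conceptual step, is the technical heart of the lemma.
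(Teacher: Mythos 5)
Your setup is correct and matches the paper's first step: both inclusions reduce, via Lemmas~\ref{lem: Chow envelopes n>1 GL2 case}, \ref{lem: image F and G}, \ref{lem: superfluous polynomials} and~\ref{lem: generators M}, to showing $M_{r*}(1)\in J_1$ for $r\geq 3$. But your inductive mechanism for this key step fails. The recursion coming from the diagram in Lemma~\ref{lem: generators M} only ever produces relations of the shape
\[
M_{r*}(\xi_r^{\,i+1})+\alpha\,M_{r*}(\xi_r^{\,i})\;\in\;\big\langle \text{level }(r-1)\text{ classes}\big\rangle ,
\]
so $M_{r*}(1)$ never appears isolated: it only occurs in the combination $M_{r*}(\xi_r)+\alpha M_{r*}(1)$. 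If you try to close the system using the projective bundle relation $p_r(\xi_r)=0$ on the $\P^r$-factor, you find that modulo level-$(r-1)$ classes one has $M_{r*}(\xi_r^{\,i})\equiv(-\alpha)^i M_{r*}(1)$, and the projective bundle relation then yields $p_r(-\alpha)\,M_{r*}(1)\equiv 0$. But $p_r$ is precisely the product of the equivariant classes of the coordinate hyperplanes, and $\xi_r+\alpha$ \emph{is} one of those classes, so $p_r(-\alpha)=0$ identically: the relation is vacuous. Hence the formal recursion gives no constraint at all on $M_{r*}(1)$ modulo lower-level classes --- this is exactly why Lemma~\ref{lem: generators M} must keep \emph{all} the classes $M_{r*}(1)$ as generators, and why $M_{r*}(1)\in J_1$ for $r\geq 3$ is a genuinely computational fact rather than a formal consequence. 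For the same reason your heuristic that a ``second-order recursion'' explains the two initial generators $M_{1*}(1)$, $M_{2*}(1)$ is not right: nothing in the recursion distinguishes $r=2$ from $r\geq 3$; the distinction is established by explicit computation (see Remark~\ref{rmk: simplification M2(1)}, which shows $M_{2*}(1)$ is not redundant, while the present lemma shows the higher ones are).

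Your fallback --- computing $M_{r*}(1)$ explicitly by torus localization and verifying membership in $J_1$ by a polynomial identity --- is a legitimate route in principle, but it is left entirely undone, and it \emph{is} the whole content of the lemma: one would need such an identity uniformly in $r$, $a$, $b$. The paper avoids this computation altogether. It uses the cartesian diagram~\eqref{eq: diag Mr GL2 PGL2} comparing the $\GL_2$- and $\PGL_2$-quotients to get $(M_{r}^{\GL_2})_*(1)=\phi^*(M_r^{\PGL_2})_*(1)$, invokes the $\PGL_2$-equivariant membership $(M_r^{\PGL_2})_*(1)\in J_1^{\PGL_2}$ already proved in~\cite[Section 5.2.2]{CLI}, and checks (via diagrams analogous to~\eqref{diagram: square of groups}) that $\phi^*J_1^{\PGL_2}\subseteq J_1$. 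If you do not want to cite the prequel, you must actually carry out the explicit computation you postpone; as written, your argument has a gap precisely at the step that constitutes the lemma.
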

	\begin{proof}
		By Lemmas~\ref{lem: image F and G},~\ref{lem: superfluous polynomials} and Lemma~\ref{lem: generators M}, it is enough to show that $M_{r*}(1)\in J_1$ for every $r\geq2$.
		
		Consider the cartesian diagram
		\begin{equation}\label{eq: diag Mr GL2 PGL2}
		\begin{tikzcd}
			\left[\frac{\P^r\times\P^{2a-r}\times\P^{2b-r}}{\GL_2}\right]\arrow[d,"\phi_r"]\arrow[r,"M_r^{\GL_2}"] & \left[\frac{\P^{2a}\times\P^{2b}}{\GL_2}\right]\arrow[d,"\phi"]\\
			\left[\frac{\P^r\times\P^{2a-r}\times\P^{2b-r}}{\PGL_2}\right]\arrow[r,"M_r^{\PGL_2}"] & \left[\frac{\P^{2a}\times\P^{2b}}{\PGL_2}\right]
		\end{tikzcd}
		\end{equation}
	    where the action of $\PGL_2$ on the projective spaces is the one induced by $\GL_2$; the superscripts on the maps indicate the underlying group whose action we are considering, and the vertical maps are induced by the natural morphism $B\GL_2\rightarrow B\PGL_2$. Then,
		\begin{equation}\label{eq: equality Mr1 GL2 PGL2}
			(M_{r}^{\GL_2})_*(1)=(M_{r}^{\GL_2})_*(\phi_r^*(1))=\phi^*(M_r^{\PGL_2})_*(1).
		\end{equation}
		By~\cite[Section 5.2.2]{CLI}, we know that $(M_r^{\PGL_2})_*(1)$ is contained in the ideal $J_1^{\PGL_2}$ generated by classes in the image of $F^{\PGL_2}_*$, $G^{\PGL_2}_*$, $M_{1*}^{\PGL_2}$ and $M^{\PGL_2}_{2*}$. Notice that this is true also when $a$ and $b$ are 1, by definition. There are diagrams for $F$ and $G$ analogous to~\eqref{diagram: square of groups}, from which it follows that $\phi^*J_1^{\PGL_2}\subseteq J_1^{\GL_2}=J_1$. Together with equation~\eqref{eq: equality Mr1 GL2 PGL2}, this concludes.
	\end{proof}
	
	\begin{lemma}\label{lem: image M1}
		We have
		\begin{align*}
			M_{1*}(1)=2b\xi_{2a}+2a\xi_{2b}-4abc_1, && M_{1*}(\xi_1)&=\xi_{2a}\xi_{2b}-4abc_2
		\end{align*}
		and
		\begin{align*}
			M_{2*}(1)&=(2a-1)(2b-1)\xi_{2a}\xi_{2b}+b(2b-1)\xi_{2a}^2+a(2a-1)\xi_{2b}^2+4ab(a+b-1)c_2\\	&-((4a-1)b(2b-1)\xi_{2a}+a(2a-1)(4b-1)\xi_{2b})c_1+2ab(2a-1)(2b-1)c_1^2.
		\end{align*}
	\end{lemma}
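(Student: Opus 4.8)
The plan is to compute all three pushforwards inside the ambient ring $\CH^*_{\GL_2}(\P^{2a}\times\P^{2b})$, which by Lemma~\ref{lem: generators RH g even} (the projective bundle formula) is a free $\Z[c_1,c_2]$-module on the monomials $\xi_{2a}^i\xi_{2b}^j$, and in particular is torsion-free; this torsion-freeness is what will license a division by $2$ at the very end. Throughout I would work $\GL_2$-equivariantly, with no reduction to the torus, computing the Chern classes of the relevant symmetric-power bundles via $c_1(\Sym^k V^\vee)=-\tfrac{k(k+1)}{2}\,c_1$.

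For $M_1$ I would resolve the common-factor locus by an incidence variety. Writing $\zeta=c_1(\cO_{\P(V^\vee)}(1))$, consider
$$\Gamma_1=\{(F,G,h)\in\P^{2a}\times\P^{2b}\times\P(V^\vee): h\mid F,\ h\mid G\},$$
so that $M_1=\mathrm{pr}_{12}\circ\Phi$, where $\Phi\colon\P^1\times\P^{2a-1}\times\P^{2b-1}\xrightarrow{\ \sim\ }\Gamma_1$, $(h,f,g)\mapsto(hf,hg,h)$, is a closed immersion and $\mathrm{pr}_{12}$ forgets $h$ (a trivial $\P^1$-bundle). Since $\Phi^*\zeta=\xi_1$, the projection formula gives $M_{1*}(\xi_1^m)=\mathrm{pr}_{12*}(\zeta^m\,[\Gamma_1])$. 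The divisor $\{h\mid F\}$ is the zero locus of the tautological section of $\cO_{\P^{2a}}(1)\otimes\mathcal{Q}_a$, where $\mathcal{Q}_a=\Sym^{2a}(V^\vee)/\big(\cO_{\P(V^\vee)}(-1)\otimes\Sym^{2a-1}(V^\vee)\big)$, so that $[\{h\mid F\}]=\xi_{2a}+c_1(\mathcal Q_a)=\xi_{2a}+2a\zeta-2ac_1$, and symmetrically for $G$. As $\{h\mid F\}$ and $\{h\mid G\}$ meet in the expected codimension $2$, one gets $[\Gamma_1]=(\xi_{2a}+2a\zeta-2ac_1)(\xi_{2b}+2b\zeta-2bc_1)$. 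Pushing forward along the $\P^1$-bundle using $\mathrm{pr}_{12*}(\zeta^0)=0$, $\mathrm{pr}_{12*}(\zeta)=1$, $\mathrm{pr}_{12*}(\zeta^2)=c_1$, $\mathrm{pr}_{12*}(\zeta^3)=c_1^2-c_2$ (from $\zeta^2=c_1\zeta-c_2$) then yields $M_{1*}(1)$ and $M_{1*}(\xi_1)$ simultaneously. The same computation for binary forms of degrees $d_1,d_2$ gives the general formula $d_2\xi_{d_1}+d_1\xi_{d_2}-d_1d_2c_1$ for the degree-one common-factor map, which I will reuse below.

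For $M_{2*}(1)$ I would bypass the Chern classes of rank-$2$ symmetric quotients by introducing the auxiliary map
$$N\colon\P^1\times\P^1\times\P^{2a-2}\times\P^{2b-2}\to\P^{2a}\times\P^{2b},\qquad (h_1,h_2,f,g)\mapsto(h_1h_2f,\,h_1h_2g),$$
and exploiting its two factorizations. First, $N=M_2\circ(\mu\times\mathrm{id})$ where $\mu\colon\P^1\times\P^1\to\P^2$, $(h_1,h_2)\mapsto h_1h_2$, is the symmetric multiplication, a finite map of degree $2$; hence $N_*(1)=M_{2*}\big((\mu\times\mathrm{id})_*1\big)=2\,M_{2*}(1)$. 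Second, $N=M_1\circ(\mathrm{id}\times M_1')$, where $M_1'$ is the degree-one common-factor map for degrees $(2a-1,2b-1)$; hence $N_*(1)=M_{1*}\big(M_{1*}'(1)\big)$ with $M_{1*}'(1)=(2b-1)\xi_{2a-1}+(2a-1)\xi_{2b-1}-(2a-1)(2b-1)c_1$ from the previous paragraph. I would then evaluate $M_{1*}$ on these classes by the projection formula, using $M_1^*\xi_{2a}=\xi_1+\xi_{2a-1}$ (so that $M_{1*}(\xi_{2a-1})=\xi_{2a}M_{1*}(1)-M_{1*}(\xi_1)$, and symmetrically) together with $M_{1*}(c_1)=c_1M_{1*}(1)$. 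Comparing the two expressions for $N_*(1)$ and dividing by $2$ produces the stated formula for $M_{2*}(1)$.

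The conceptual crux is the correct identification of the equivariant quotient bundles $\mathcal{Q}_a$, i.e.\ pinning down the character corrections $2a\zeta-2ac_1$; once these are in hand both $M_1$-pushforwards fall out of a single $\P^1$-bundle integration. The main bookkeeping obstacle lies in the $M_2$ step: one must check that $\mu$ is finite of degree exactly $2$ (so the factor $2$ is precisely accounted for), that $N$ is proper with the two advertised factorizations, and then carry out the mechanical but lengthy substitution and collection of terms. Crucially, the division by $2$ at the end is justified because the computation takes place in the torsion-free ring $\CH^*_{\GL_2}(\P^{2a}\times\P^{2b})$ (before passing to the complement of $\uD$), so multiplication by $2$ is injective there.
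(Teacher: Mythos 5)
Your computation is correct (I checked all three classes land exactly on the stated formulas, including the coefficient $4ab(a+b-1)$ of $c_2$ in $M_{2*}(1)$), but it is genuinely different from the paper's argument. The paper explicitly declines the direct computation and instead transfers the answer from the $\PGL_2$-equivariant classes already computed in the first paper of the series: using the cartesian diagram comparing the $\GL_2$- and $\PGL_2$-quotients of $\P^r\times\P^{2a-r}\times\P^{2b-r}$ and $\P^{2a}\times\P^{2b}$, one has $(M_r^{\GL_2})_*(1)=\phi^*(M_r^{\PGL_2})_*(1)$, so it only remains to determine the pullbacks $\phi^*(\xi_{2a})=\xi_{2a}-ac_1$, $\phi^*(\xi_{2b})=\xi_{2b}-bc_1$, $\phi^*(c_2)=4c_2-c_1^2$, and that $\tau=c_1^{\PGL_2}(\cO_{\P^1}(2))$ pulls back to $2\xi_1-c_1$. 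Your route is self-contained: the incidence-variety description $[\Gamma_1]=(\xi_{2a}+2a\zeta-2ac_1)(\xi_{2b}+2b\zeta-2bc_1)$ followed by $\P^1$-bundle integration gives both $M_1$-pushforwards at once, and the double factorization of $N$ (through $\mu\times\mathrm{id}$ on one side and $\mathrm{id}\times M_1'$ on the other) produces $M_{2*}(1)$ without ever touching the rank-two quotient bundle that a direct treatment of $M_2$ would require. Note that both arguments hinge on the same torsion-freeness of $\CH^*_{\GL_2}(\P^{2a}\times\P^{2b})$: the paper uses it to descend from $\PGL_2$-classes, you use it to divide $N_*(1)=2M_{2*}(1)$ by $2$. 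What yours buys is independence from the earlier $\PGL_2$-equivariant computations (and, as a bonus, the general degree-$(d_1,d_2)$ formula $d_2\xi_{d_1}+d_1\xi_{d_2}-d_1d_2c_1$); what the paper's buys is brevity and consistency with the rest of the series. Two points you should spell out to make the write-up airtight: first, $[\Gamma_1]=[\{h\mid F\}]\cdot[\{h\mid G\}]$ requires not only proper intersection but multiplicity one along $\Gamma_1$; this follows from a short tangent-space computation showing the two divisors are transverse along $\Gamma_1$ (it is not automatic from the dimension count alone). Second, $\mu_*(1)=2$ uses that the multiplication map $\P(V^\vee)\times\P(V^\vee)\to\P(\Sym^2 V^\vee)$ is finite of degree $2$, which is fine under the standing hypothesis that the characteristic is $0$ or greater than $\max(2a,2b)\geq 2$, but deserves a mention since it is exactly where the characteristic assumption enters your argument.
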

	\begin{proof}
		The computation of the classes in the statement can be carried out in the same fashion as in~\cite[Lemma 5.4]{CLI} and~\cite[Lemma 4.3]{EF09}. We use an alternative approach, leveraging previous $\PGL_2$-equivariant computations.
		
		By diagram~\eqref{eq: diag Mr GL2 PGL2} with $r=1,2$, and using the torsion-freeness of the $\GL_2$-equivariant Chow ring of $\P^{2a}\times \P^{2b}$, it suffices to compute the pullback $\phi^*$ of the corresponding $\PGL_2$-equivariant classes computed in~\cite[Lemma~5.4, Corollary~5.9]{CLI}. The statement then follows from the identities
    \[
    \phi^*(\xi_{2a}) = \xi_{2a} - a c_1, \qquad
    \phi^*(\xi_{2b}) = \xi_{2b} - b c_1, \qquad
    \phi^*(c_2) = 4c_2 - c_1^2,
    \]
    together with the fact that the pullback of $\tau = c_1^{\PGL_2}(\cO_{\P^1}(2))$ along $[\P^1/\GL_2] \to [\P^1/\PGL_2]$ is $2\xi_1 - c_1$. On $[\P^{2a}/\PGL_2]$ (resp. on $[\P^{2b}/\PGL_2]$), the class $\xi_{2a}$ (resp. $\xi_{2b}$) denotes the $\PGL_2$-equivariant first Chern class of $\cO(1)$. The class $c_2$ on $B \PGL_2$ is defined as $c_2=c_2(W_2)$. The three identities above then follow from the fact that $W_j$ pulls back to $\Sym^{2j}(V^\vee)\otimes \det(V)^{\otimes j}$ under the standard morphism $B\GL_2 \to B\PGL_2$.

    Finally, the pullback $x$ of $\tau$ to $[\P^1/\GL_2]$ is of the form $2\xi_1 + u c_1$ for some $u\in\Z$. Since $\tau^2 + c_2 = 0$ on $[\P^1/\PGL_2]$ by~\cite[Lemma~5.1]{FV11}, we have
    \[
    (2\xi_1 + u c_1)^2 \;=\; x^2 \;=\; -4c_2 + c_1^2
    \;=\; 4\xi_1^2 - 4 c_1 \xi_1 + c_1^2
    \;=\; (2\xi_1 - c_1)^2,
    \]
    where we used the projective bundle relation $\xi_1^2 - c_1\xi_1 + c_2 = 0$ on $[\P^1/\GL_2]$. Expanding we obtain
    \[
    4u\,\xi_1 c_1 + u^2 c_1^2 \;=\; -4\,\xi_1 c_1 + c_1^2,
    \]
    which forces $u=-1$ as the only relation in $\xi_1$ in $\CH^*_{\GL_2}(\P^1)$ has degree $2$. Thus the pullback of $\tau$ is $2\xi_1 - c_1$, as claimed.
	\end{proof}

    \begin{remark}\label{rmk: simplification M2(1)}
        The expression for $M_{2*}(1)$ can be simplified if we work modulo the other generators of $J_1$. Indeed, by using $F_{1*}(\xi_1)$, $G_{1*}(\xi_1)$, and $M_{1*}(\xi_1)$ to eliminate $\xi_{2a}^2$, $\xi_{2b}^2$, and $\xi_{2a}\xi_{2b}$, respectively, from the expression in Lemma~\ref{lem: image M1}, we get
        \[
            -(2a-1)(2b-1)\left((2b\xi_{2a}+2a\xi_{2b})c_1-2abc_1^2-8abc_2\right).
        \]
        Using the relation $2b\xi_{2a}+2a\xi_{2b}-4abc_1=0$ coming from $M_{1*}(1)$, we can simply it further to get
        \[
            2ab(2a-1)(2b-1)(4c_2-c_1^2).
        \]
        It follow easily that $M_{2*}(1)$ is a not a redundant generator of $J$.
    \end{remark}
	
	Now, the proof of Theorem~\ref{thm: chow P x P} follows easily.
	
	\begin{proof}[proof of Theorem~\ref{thm: chow P x P}]
		By Lemmas~\ref{lem: generators RH g even} and~\ref{lem: superfluous polynomials} we know the generators, and it is enough to compute the ideal $J$. This is shown to be equal to $J_1$ in Lemma~\ref{lem: ideal = candidate ideal}, which is computed in Lemmas~\ref{lem: image F and G} and~\ref{lem: image M1}. Finally, Remark~\ref{rmk: simplification M2(1)} explains the last relation in Theorem~\ref{thm: Chow RH g even}.
	\end{proof}
\subsection{Interpretation of the generators of $\CH^*(\RH_g^n)$ for $g$ even}\label{sec: interpretation generators g even}

In this subsection we provide natural vector bundles on $\RH_g^n$ whose Chern classes generate the Chow ring, when $g$ is even.
By Theorem~\ref{thm: Chow RH g even}, the Chow ring of $\RH_g^n$ is generated by the classes $c_1$, $c_2$ and $t$ obtained by pullback from $B\Gm\times\GL_2$.

Let $V_g$ be the rank 2 vector bundle on $\cH_g$ functorially defined as
\[
    \pi_*(\omega_{\pi}^{\otimes g/2}((1-g/2)W))
\]
for every family $\pi:C\rightarrow S$ of hyperelliptic curves with Weierstrass divisor $W\subset C$; see~\cite[Section 5.1]{EF09}. We denote by the same symbol the pullback of $V_g$ along the forgetful morphism $\RH_g^n\rightarrow\cH_g$.
We also denote by $\cN_n$ the line bundle on $\cD_{2n}$ that associates
\[
    \rho_*(\omega_{\rho}^{\otimes n}\otimes\cO(F_{2n}))
\]
to every family $F_{2n}\subset P\xrightarrow{\rho} S$ of degree $2n$ étale divisors on Brauer-Severi scheme of relative dimension 1 over $S$; see~\cite[Section 5.3]{CLI}. Again, we use the same symbol for the pullback of $\cN_n$ along the composite $\RH_g^n\rightarrow\cD_{2n,2g+2-2n}\xrightarrow{\mathrm{pr}_1}\cD_{2n}$.

\begin{lemma}\label{lem: interpretation generators g even}
    We have
    \begin{align*}
        c_1=-c_1(V_g), && c_2=c_2(V_g), && t=-c_1(\cN_n).
    \end{align*}
\end{lemma}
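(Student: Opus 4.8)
The three classes $c_1,c_2,t$ are by construction pulled back from $B(\Gm\times\GL_2)$ along the presentation of Theorem~\ref{thm: pres RH}, so the whole statement reduces to identifying the pullbacks of the two natural bundles $V_g$ and $\cN_n$ with explicit $(\Gm\times\GL_2)$-equivariant bundles and reading off Chern classes. Precisely, the plan is to show that under the forgetful map $\RH_g^n\to\cH_g$ the bundle $V_g$ becomes the dual standard representation $V^\vee$, and that under the composite $\RH_g^n\to\cD_{2n,2g+2-2n}\xrightarrow{\mathrm{pr}_1}\cD_{2n}$ the line bundle $\cN_n$ becomes the character $\chi^{-1}$. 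Granting these, the first identification gives $c_1(V_g)=c_1(V^\vee)=-c_1$ and $c_2(V_g)=c_2(V^\vee)=c_2$ by rank-two duality, while the second gives $c_1(\cN_n)=c_1(\chi^{-1})=-t$, which are exactly the three claimed equalities.

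The bundle $V_g$ is the easy half. The presentation $\cH_g=[\Sym^{2g+2}(V^\vee)\otimes\det(V)^{\otimes g}\setminus\Delta/\GL_2]$ appearing in Theorem~\ref{thm: pres RH} is the one of \cite[Section~5.1]{EF09} (equivalently \cite[Corollary~4.7]{AV04}), in which the acting group is the frame group of $V_g$ and the target $\P^1$ of the double cover is the projectivization of $V_g^\vee$; thus $V_g\cong V^\vee$ as $\GL_2$-representations. Since $V_g$ has rank two, the equalities $c_1(V_g)=-c_1$ and $c_2(V_g)=c_2$ follow at once. The only care needed here is the projectivization convention of \cite{EF09}, which fixes that $V_g$ is $V^\vee$ and not $V$ (the sign on $c_1$ together with the absence of a sign on $c_2$ are exactly the signatures of passing to a rank-two dual).

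The substance is the computation of $\cN_n$, which I would carry out on the universal Brauer–Severi curve $\rho\colon\cP\to\RH_g^n$, equal in the chosen presentation to the projectivization $\P(V)$ of the standard representation. An Euler-sequence computation gives $\omega_\rho\cong\cO_\cP(-2)\otimes\rho^*\det(V)^{-1}$. Next I would identify $\cO(F_{2n})$ equivariantly: the universal form $f$ cutting out $F_{2n}$ is, by the computation in the proof of Theorem~\ref{thm: pres RH}, a section of the first factor $\chi^{-1}\otimes\det(V)^{\otimes n}\otimes\Sym^{2n}(V^\vee)$, and since $\Sym^{2n}(V^\vee)=\rho_*\cO_\cP(2n)$ this exhibits $f$ as an invariant section of $\cO_\cP(2n)\otimes\rho^*(\chi^{-1}\otimes\det(V)^{\otimes n})$, so that $\cO(F_{2n})\cong\cO_\cP(2n)\otimes\rho^*(\chi^{-1}\otimes\det(V)^{\otimes n})$. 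Combining the two,
\[
\omega_\rho^{\otimes n}\otimes\cO(F_{2n})\;\cong\;\cO_\cP(-2n+2n)\otimes\rho^*\bigl(\det(V)^{-n}\otimes\chi^{-1}\otimes\det(V)^{n}\bigr)\;\cong\;\rho^*\chi^{-1},
\]
whence by the projection formula $\cN_n=\rho_*(\omega_\rho^{\otimes n}\otimes\cO(F_{2n}))\cong\chi^{-1}\otimes\rho_*\cO_\cP\cong\chi^{-1}$, giving $t=-c_1(\cN_n)$. The pleasant mechanism is that the $\cO_\cP(2n)$ coming from $\cO(F_{2n})$ cancels the $\cO_\cP(-2n)$ from $\omega_\rho^{\otimes n}$, while the $\det(V)^{\otimes n}$ twist built into the normalization $W_n=\Sym^{2n}(V^\vee)\otimes\det(V)^{\otimes n}$ cancels the $\det(V)^{-n}$ from $\omega_\rho^{\otimes n}$, leaving only the character $\chi^{-1}$.

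The main obstacle is the bookkeeping of equivariant structures and projectivization conventions, not any deep geometry. Concretely, one must pin down the exact twist of $\cO(F_{2n})$: the $\det(V)^{\otimes n}$ factor entering through the normalization of $W_n$, and the $\chi^{-1}$ factor entering through the first factor of the $\RH_g^n$-presentation and the identification of $\chi^{(1)}$ with $\chi$ in diagram~\eqref{diagram: square of groups}. The sign here is subtle because the universal form transforms covariantly (like an element of the representation $A$), so no inversion of the character occurs; this is the point most prone to error. One must likewise fix $V_g\cong V^\vee$ (rather than $V$) from the conventions of \cite{EF09}. Once these equivariant identifications are set, the two cancellations in the displayed isomorphism are forced and the remainder is formal.
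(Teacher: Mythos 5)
Your proof is correct, but for the main identity $t=-c_1(\cN_n)$ it takes a genuinely different route from the paper. The paper handles the first two equalities exactly as you do (by citing \cite[Proposition 5.2]{EF09} together with the dual-convention footnote, which is precisely your observation that $V_g\cong V^\vee$), but for the third it does not compute anything on the universal curve: it quotes \cite[Lemma 5.20]{CLI} to get $c_1(\cN_n)=\xi_{2n}$ on $\cD_{2n,2g+2-2n}$, rewrites $\xi_{2n}=-x_1$ in the presentation \cite[Equation 5]{CLI}, and then pulls back along $B(\Gm\times\GL_2)\to B(\Gm\times\Gm\times\PGL_2)$, $(\lambda,A)\mapsto(\lambda,\lambda^{-1}\det A,[A])$, under which $x_1\mapsto t$. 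Your argument instead recomputes $\cN_n$ from scratch: identifying $\cP=\P(V)$, getting $\omega_\rho\cong\cO_\cP(-2)\otimes\rho^*\det(V)^{-1}$ from the Euler sequence, and pinning down $\cO(F_{2n})\cong\cO_\cP(2n)\otimes\rho^*(\chi^{-1}\otimes\det(V)^{\otimes n})$ from the equivariant structure of the tautological section, so that everything cancels to $\rho^*\chi^{-1}$. Both are valid; the paper's proof is shorter and stays consistent with the notation and results of the prequel, while yours is self-contained and makes the sign bookkeeping transparent. In fact, your Euler-sequence mechanism is exactly the one the paper deploys later, in Proof~2 of part~(i) of Proposition~\ref{prop: root stacks}, where it finds $c_1(N)=nc_1-t$ for $N$ with $\rho^*N^\vee\cong\cO(2n)(-D_{2n})$ --- a statement your isomorphism $\cO(F_{2n})\cong\cO_\cP(2n)\otimes\rho^*(\chi^{-1}\otimes\det(V)^{\otimes n})$ reproduces on the nose, which is a good consistency check. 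Two minor points: your identification of the pullback of $\cN_n$ to $\RH_g^n$ with the pushforward computed on the universal family over $\RH_g^n$ silently uses cohomology and base change (harmless, since $\omega_\rho^{\otimes n}\otimes\cO(F_{2n})$ is fiberwise trivial, so the pushforward is a line bundle commuting with base change); and the parenthetical ``like an element of the representation $A$'' is confusing, since $A$ denotes a specific $G$-representation in Notation~\ref{not: representation A of G} --- you mean only that the universal form transforms as a point of the representation space, which is what forces the twist $\chi^{-1}\otimes\det(V)^{\otimes n}$ rather than its inverse.
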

\begin{proof}
    The first two equalities follow from~\cite[Proposition 5.2]{EF09}. \footnote{In ~\cite[Proposition 5.2]{EF09}, the authors interpret $\GL_2$ as $\mathrm{Aut}(\P^1,\cO_{\P^1}(1))$, while for us it is $\mathrm{Aut}(\P^1,\cO_{\P^1}(-1))$. In particular, the standard $\GL_2$ representation of $\GL_2$ differ by a dual and this justifies the difference of signs. See also \cite[Remark 41]{CIL24} and \cite[footnote 4]{CLI}.}

    For the last identity, from~\cite[Lemma 5.20]{CLI} and with the same notation as in that lemma, we have  $c_1(\cN_n)=\xi_{2n}$ in $\CH^*(\cD_{2n,2g+2-2n})$. With respect to the presentation of $\cD_{2n,2g+2-2n}$ in~\cite[Equation 5]{CLI}, we get $\xi_{2n}=-x_1$, where $x_1$ is the first Chern class of the standard representation of the first factor $\Gm$ in $\Gm\times\Gm\times\PGL_2$. The statement then follows immediately from the commutative diagram
    \[
    \begin{tikzcd}
        \RH_g^n\arrow[r]\arrow[d] & \cD_{2n,2g+2-2n}\arrow[d]\\
        B(\Gm\times\GL_2)\arrow[r,"\phi"] & B(\Gm\times\Gm\times\PGL_2)
    \end{tikzcd}
    \]
    where $\phi$ is induced by $(\lambda,A)\mapsto(\lambda,\lambda^{-1}\det A,[A])$.
\end{proof}

\section{Presentations and properties of the stacks $\wRH_g^n$}\label{sec: Presentations and properties of the various stacks}

\subsection{Proof of the main geometric input}\label{sec: geometric input}

As explained in the introduction, the starting point for all our presentations is Proposition \ref{prop: starting point}, which exhibits $\wRH_g^n$ as an open in a fiber product of two stacks of hyperelliptic curves.

The first step to show this is recognizing the stack $\wRH_g$ as the stack of Hurwitz covers. Let $\mathsf{Hurw}_{g' \xrightarrow{2:1} g}$ denote the stack of unramified degree $2$ covers of a genus $g$ curve by a connected genus $g'$ curve. The genus $g'$ is completely determined by the Riemann-Hurwitz formula: $g' = 2g - 1$.

\begin{lemma}
    There is an isomorphism of stacks 
    $$
    \wRH_g \cong \mathsf{Hurw}_{g' \xrightarrow{2:1} g} |_{\cH_g},
    $$
    where the right-hand side denotes the restriction of $\mathsf{Hurw}_{g' \xrightarrow{2:1} g}$ to the locus of hyperelliptic genus $g$ curves.
\end{lemma}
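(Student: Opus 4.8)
The plan is to realize both stacks as fiber products over $\cM_g$ and to construct an $\cM_g$-equivalence between the non-hyperelliptic stacks $\widetilde{\R}_g$ and $\mathsf{Hurw}_{g' \xrightarrow{2:1} g}$, from which the claim follows by restriction along $\cH_g \hookrightarrow \cM_g$. The equivalence in one direction, $\Phi$, is the classical relative-spectrum construction: to a Prym curve $(C/S,\eta,\beta)$ I associate the finite $\cO_C$-algebra $\mathcal{A} := \cO_C \oplus \eta$, where the multiplication on the $\eta$-summand is the composite $\eta \otimes \eta = \eta^{\otimes 2} \xrightarrow{\beta} \cO_C$, and set $C' := \underline{\mathrm{Spec}}_C(\mathcal{A})$ with its structure map $\pi \colon C' \to C$. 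In the opposite direction, $\Psi$: to an \'etale double cover $\pi \colon C' \to C$ I attach its deck involution $\sigma$ and decompose $\pi_* \cO_{C'} = \cO_C \oplus \eta$ into $\sigma$-invariants and anti-invariants, so that the algebra multiplication restricts to a map $\beta \colon \eta^{\otimes 2} \to \cO_C$.

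First I would check that these assignments land in the correct stacks. For $\Phi$: since $\beta$ is an isomorphism and $\mathrm{char}\,k \neq 2$, the trace form of $\mathcal{A}$ has local Gram determinant a unit, so $\pi$ is finite \'etale of degree $2$; hence $C' \to S$ is smooth and proper, and Riemann--Hurwitz forces the fibers to have genus $g' = 2g-1$. Geometric connectedness of the fibers of $C' \to S$ is equivalent to the nontriviality of $\eta$ on each geometric fiber of $C \to S$, which is exactly the defining condition of a Prym curve; conversely, connectedness of $C'$ guarantees that the anti-invariant part $\eta$ is the nontrivial $2$-torsion line bundle needed to produce an object of $\widetilde{\R}_g$, and the restricted multiplication $\beta$ is an isomorphism precisely because $\pi$ is \'etale. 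Because $\mathrm{char}\,k \neq 2$, the eigensheaf splitting of $\pi_* \cO_{C'}$ commutes with arbitrary base change, so both constructions are compatible with pullback and define morphisms of the associated prestacks, hence of their stackifications.

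The crucial point, and the step I expect to be the main obstacle, is matching morphisms --- and in particular the extra $\mu_2$ --- so that the equivalence respects the non-rigidified structure. A morphism in $\widetilde{\R}_g$ is a cartesian square together with the datum of $\tau \colon \varphi^* \eta' \xrightarrow{\sim} \eta$ making \eqref{eqn: diagram of sheaves alt} commute; I would show that the commutativity of \eqref{eqn: diagram of sheaves alt} is precisely the condition that $\mathrm{id} \oplus \tau$ is an isomorphism of $\cO_C$-algebras $\varphi^* \mathcal{A}' \xrightarrow{\sim} \mathcal{A}$, hence descends to an isomorphism of double covers lifting $\varphi$, and conversely that any isomorphism of double covers over $\varphi$ recovers such a $\tau$ on anti-invariant parts. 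In particular the involution $-1$ on $\eta$ (with $\beta$ fixed), which generates the $\mu_2$ distinguishing $\wRH_g$ from $\RH_g$, corresponds under this dictionary exactly to the deck transformation $\sigma$ of $C'/C$; this identifies the two $\mu_2$'s and shows that the equivalence is one of stacks, not merely of their $\mu_2$-rigidifications.

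Finally, I would verify that $\Phi$ and $\Psi$ are mutually quasi-inverse --- the composite $\Psi \circ \Phi$ returns $\eta$ as the anti-invariant summand of $\cO_C \oplus \eta$ with its tautological $\beta$, and $\Phi \circ \Psi$ recovers $C'$ from $\pi_* \cO_{C'}$ by the relative-spectrum construction --- and conclude. Restricting the resulting $\cM_g$-isomorphism $\widetilde{\R}_g \cong \mathsf{Hurw}_{g' \xrightarrow{2:1} g}$ along $\cH_g \subseteq \cM_g$, using that $\wRH_g = \cH_g \times_{\cM_g} \widetilde{\R}_g$ by definition, yields the stated isomorphism.
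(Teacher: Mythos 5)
Your proof is correct and is precisely the standard cyclic-cover correspondence (relative $\mathrm{Spec}$ of $\cO_C \oplus \eta$ in one direction, $\pm 1$-eigensheaf decomposition of $\pi_*\cO_{C'}$ in the other) that the paper itself invokes when it dismisses this lemma as ``well-known and similar to'' \cite{Vis98} and \cite{AV04}, so you have simply supplied the details the paper omits. In particular, your identification of the deck involution of $C'/C$ with the automorphism $-1$ of $\eta$ fixing $\beta$ --- the point that makes the statement hold for the non-rigidified stack $\wRH_g$ rather than merely for $\RH_g$ --- is exactly the content implicit in the paper's $\mu_2$-gerbe description, so the two arguments agree in approach and differ only in the level of detail.
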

\begin{proof}
     This is well-known and similar to \cite{Vis98} and \cite{AV04}.
\end{proof}

Next we try to understand the stack $(\cH_{n-1} \times_{B \PGL_2} \cH_{g-n}) \smallsetminus \Delta$ better. An object of this stack over a scheme $S$ is, by definition, a triple $(\alpha_1,\alpha_2,\phi)$ where:

\begin{itemize}
    \item For $i=1,2$, $\alpha_i$ is a diagram $C_i \xrightarrow{f_i} P_i \to S$, where $f_i$ is a double cover (over $S$) of a Brauer--Severi $P_i$ of relative dimension $1$ by a connected curve of genus $n-1$;
    \item $\phi$ is an isomorphism $\phi: P_1 \to P_2$ over $S$.
\end{itemize}

These satisfy the property that the branch divisors of $f_1$ and $f_2$ do not intersect on $P_1$ (after transporting the branch divisor of $f_2$ via $\phi$).

Since the triple $(\alpha_1, \alpha_2, \phi)$ is isomorphic to $(C_1 \to P_1 \to S, \phi^*C_2 \to P_1 \to S, \mathrm{Id})$, our stack is equivalent to the stack whose objects are diagrams over $S$ of the form
\[
\begin{tikzcd}
C_1 \arrow[rd,"f_1"'] & & C_2 \arrow[ld,"f_2"] \\
 & P &
\end{tikzcd}
\]
where $C_i \to P$ are double covers of a Brauer--Severi $P \to S$ of relative dimension $1$ with $C_1$ (resp. $C_2$) of genus $n-1$ (resp. $g-n$).

Denote by $\iota_i$ the involution of the curve $C_i$ for $i=1,2$. We can complete the above diagram to a diagram 

\begin{equation}\label{diagram: all curves}
\begin{tikzcd}[row sep=3.5em, column sep=5em]
& C' := C_1 \times_P C_2 
    \arrow[ld, "p_1"'] 
    \arrow[rd, "p_2"] 
    \arrow[d, "\pi"'] 
    \arrow[loop above, "\scriptstyle{\iota_1 \times \iota_2, \; \iota_1, \; \iota_2}"] & \\
C_1 
    \arrow[rd, "f_1"'] 
    \arrow[loop left, "\iota_1"] & 
C := (C_1 \times_P C_2)/\langle \iota_1 \times \iota_2 \rangle
    \arrow[d, "f"] & 
C_2 
    \arrow[ld, "f_2"] 
    \arrow[loop right, "\iota_2"] \\
& P &
\end{tikzcd}
\end{equation}

Note that $\pi$ is finite and étale of degree $2$. Indeed, the map $C' \to P$ has degree $4$ and is branched over $2g+2$ points; above each branch point there are two simple ramification points on $C'$, so the total ramification contribution is $4g+4$. By Riemann--Hurwitz, $C'$ has genus $g' = 2g-1$, and applying Riemann--Hurwitz to $\pi$ shows that $\pi$ is étale.

    Define 
    $$
    \Phi: \bigsqcup_{1 \leq n \leq  (g+1)/2} (\cH_{n-1} \times_{B \PGL_2} \cH_{g-n}) \smallsetminus \Delta \to \mathsf{Hurw}_{g' \xrightarrow{2:1} g}|_{\cH_g}
    $$
    by associating to $(\alpha_1,\alpha_2,\phi)$  the double $C' \to C$ (over $S$) above. 

    \begin{lemma}
        The morphism $\Phi$ maps $(\cH_{n-1} \times_{B \PGL_2} \cH_{g-n})\setminus\Delta$ to $\wRH_g^n \subseteq \mathsf{Hurw}_{g' \xrightarrow{2:1} g}|_{\cH_g}$.
    \end{lemma}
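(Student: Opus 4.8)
The plan is to determine the discrete invariant of the Prym pair produced by $\Phi$ and to match it with $n$. Recall from the introduction that the component $\wRH_g^n \subseteq \wRH_g \cong \mathsf{Hurw}_{g' \xrightarrow{2:1} g}|_{\cH_g}$ is characterized by the property that the $2$-torsion bundle $\eta$ on $C$ classifying the \'etale double cover $C' \to C$ lies in $B_n$, i.e.\ $\eta \cong \cO_C(n \cdot g^1_2 - e)$ for an effective divisor $e$ of degree $2n$ supported on distinct Weierstrass points. Since the substacks $\wRH_g^m$ are the open and closed pieces of the decomposition \eqref{eqn: decomposition RH alt2}, and the source $(\cH_{n-1} \times_{B\PGL_2} \cH_{g-n}) \setminus \Delta$ is connected — being an open substack of the irreducible stack $\cH_{n-1} \times_{B\PGL_2} \cH_{g-n}$, which is a quotient of an open subscheme of a product of spaces of binary forms by a connected linear algebraic group — its image under $\Phi$ lies entirely in a single $\wRH_g^m$. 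It therefore suffices to identify $m$ on one geometric point, where the invariant can be computed by hand.

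So fix a geometric point, trivialize $P \cong \P^1$ with affine coordinate $x$, and write $C_1 \colon u^2 = f_1(x)$ and $C_2 \colon v^2 = f_2(x)$ for the two double covers, where $\deg f_1 = 2n$ and $\deg f_2 = 2g+2-2n$ by Riemann--Hurwitz applied to $C_1, C_2 \to \P^1$, and where $f_1, f_2$ have disjoint roots (the $\Delta$ condition). Then $C' = C_1 \times_P C_2 = \{u^2 = f_1,\ v^2 = f_2\}$, the involution $\tau = \iota_1 \times \iota_2$ acts by $(u,v) \mapsto (-u,-v)$, and the $\tau$-invariant functions are generated over $k[x]$ by $w := uv$, which satisfies $w^2 = f_1 f_2$. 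Hence $C = C'/\langle \tau \rangle$ is the hyperelliptic curve $\{w^2 = f_1 f_2\}$, with hyperelliptic map given by $x$ and with the full set of $2g+2$ branch points $B_1 \sqcup B_2$, where $B_i$ denotes the roots of $f_i$. This matches the geometric description of Proposition~\ref{prop: starting point} and confirms $C \in \cH_g$, with $g^1_2 = \cO_C(x^{-1}(\mathrm{pt}))$.

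It remains to read off $\eta$. The \'etale double cover $q \colon C' \to C$ is obtained from $C$ by adjoining $u = \sqrt{f_1}$ (then $v = w/u$ is determined), so $\eta$ is the $(-1)$-eigensheaf of $\tau$ on $q_* \cO_{C'}$, the $2$-torsion line bundle $\cO_C(\tfrac12 \mathrm{div}_C f_1)$. Computing on $C$, each factor $x - a$ with $a \in B_1$ vanishes to order $2$ at the Weierstrass point $W_a$ over $a$ and has polar divisor a fiber of $x$; summing over the $2n$ roots gives
\[
  \mathrm{div}_C(f_1) = 2\Big( \sum_{a \in B_1} W_a - n \cdot g^1_2 \Big),
\]
so that $\eta \cong \cO_C\big( \sum_{a \in B_1} W_a - n \cdot g^1_2 \big)$. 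Since $\eta$ is $2$-torsion and $e := \sum_{a \in B_1} W_a$ is effective of degree $|B_1| = 2n$ and supported on distinct Weierstrass points, we get $\eta \cong \cO_C(n \cdot g^1_2 - e) \in B_n$ (the sign is immaterial because $\eta^{\otimes 2} \cong \cO_C$). Thus $(C,\eta) \in \RH_g^n$ and $\Phi(\alpha_1,\alpha_2,\phi) \in \wRH_g^n$.

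The computation itself is elementary; the step requiring care is the reduction to a single geometric point, which rests on the connectedness of the source together with the fact that the $\wRH_g^m$ are open and closed. One could instead argue directly in families by identifying $\eta$ through the eigensheaf decomposition of the $(\mathbb{Z}/2)^2$-cover $C' \to P$ — which yields $f_* \eta \cong \cO_P(-n) \oplus \cO_P(-(g+1-n))$ for the hyperelliptic map $f \colon C \to P$ — but the pointwise argument is cleaner and avoids any Brauer--Severi twisting, since the invariant $n$ is locally constant and hence constant on the connected source.
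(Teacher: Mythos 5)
Your proof is correct, and the core computation takes a genuinely different route from the paper's. Both arguments reduce to geometric points: the paper simply checks every closed point (membership in the open-and-closed substack $\wRH_g^n$ is a pointwise condition), while you add a connectedness observation to reduce to a \emph{single} point; your irreducibility claim for $\cH_{n-1}\times_{B\PGL_2}\cH_{g-n}$ is sound and non-circular, since it only needs the quotient presentations of the individual $\cH_m$ from \cite{AV04}, not Proposition~\ref{prop: starting point}. The real difference is how the invariant is pinned down. The paper chooses no equations: it writes $\eta = f^*\cO(k)(-e_{2k})$ for an a priori unknown $k$ (classification of $2$-torsion bundles on a hyperelliptic curve), computes the pushforward of $\cO_{C'}$ to $P\cong\P^1$ in two ways --- once through $C$ via $\pi_*\cO_{C'}=\cO_C\oplus\eta$, which requires pushing forward $0\to\cO_C(-e_{2k})\to\cO_C\to\cO_{e_{2k}}\to 0$ along $f$ and a local vanishing argument, and once through the cartesian square via $f_{1*}\cO_{C_1}\otimes f_{2*}\cO_{C_2}$ --- and matches the splitting types to force $k=n$. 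You instead compute $\eta$ directly: writing $C$ as $w^2=f_1f_2$ and $C'\to C$ as adjoining $\sqrt{f_1}$, the anti-invariant eigensheaf is $\cO_C\bigl(\tfrac12\mathrm{div}_C f_1\bigr)=\cO_C\bigl(e-n\cdot g^1_2\bigr)$, which is visibly in $B_n$. Your route is more elementary and produces an explicit divisor representative of $\eta$; the paper's is coordinate-free and its intermediate output (e.g.\ $f_*\eta\cong\cO(-n)\oplus\cO(n-g-1)$) is exactly the relative-style statement your closing remark alludes to --- indeed, your proposed ``in families'' variant via the eigensheaf decomposition of the $(\Z/2)^2$-cover is essentially the paper's computation.
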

    \begin{proof}
    It is not restrictive to work over closed points. Since $P\cong \P^1$, we can write
    \[
    \pi_*\cO_{C'} \;=\; \cO_C \oplus \eta,
    \quad\text{with}\quad
    \eta \;=\; f^*\cO(k)(-e_{2k})
    \]
    for a unique $k\in\{1,\ldots,\lfloor (g+1)/2\rfloor\}$, where    $e_{2k}$ is an effective divisor of $2k$ distinct Weierstrass points. Then
    \[
    f_*\pi_*\cO_{C'} \;=\; f_*(\cO_C \oplus \eta)
    \;=\; f_*\cO_C \oplus f_*\eta
    \;=\; (\cO \oplus \cO(-g-1)) \oplus \cO(k) \otimes f_*\cO_C(-e_{2k}).
    \]
    To compute the last term, push forward along $f$ (which is finite, hence affine) the exact sequence
    \[
    0 \to \cO_C(-e_{2k}) \to \cO_C \to \cO_{e_{2k}} \to 0,
    \]
    obtaining
    \[
    0 \to f_*\cO_C(-e_{2k}) \to f_*\cO_C \cong \cO\oplus\cO(-g-1)
   \to f_*\cO_{e_{2k}} \cong \cO_{f(e_{2k})} \to 0.
    \]
    A local computation shows that the map $\cO(-g-1)\to \cO_{f(e_{2k})}$ is zero, hence 
    \[
    f_*\cO_C(-e_{2k})  \cong \cO\big(-f(e_{2k})\big)\oplus \cO(-g-1)
    \cong \cO(-2k)\oplus \cO(-g-1).
    \]
    On the other hand, since Diagram~\eqref{diagram: all curves} is cartesian of flat and proper maps, we have
    \begin{align*}
    \pi_* f_* \cO_{C'} &= f_{1*}\cO_{C_1}\otimes f_{2*}\cO_{C_2} \\
    &= (\cO\oplus \cO(-n)) \otimes (\cO\oplus \cO(-g-1+n)) \\
    &= \cO \oplus \cO(-n) \oplus \cO(-g-1+n) \oplus \cO(-g-1).
    \end{align*}
Comparing the two decompositions shows that $k=n$.
    \end{proof}

    Let 
    $$
    \Phi_g^n: (\cH_{n-1} \times_{B \PGL_2} \cH_{g-n}) \smallsetminus \Delta \to \wRH_g^n
    $$
    be the induced map.

\begin{proof}[Proof of Proposition \ref{prop: starting point}]

    We distinguish two cases:
    \begin{itemize}
        \item Suppose $n \neq (g+1)/2$.

        In this case, we will prove that $\Phi_g^n$ is an isomorphism by explicitly giving its inverse. Given an unramified double cover $\pi: C' \to C$ of an hyperelliptic curve $C$, we have associated to it a double cover $f: C \to P$ where $P \to S$ is a Brauer--Severi of relative dimension $1$ (for instance, one can take 
    $P$ to be the image of the canonical map). Let $\iota$ be the involution of $C$ preserving $f$ and $\kappa$ the involution on $C'$ preserving $\pi$. We can lift $\iota$ to an automorphism $\tilde{\iota}$ of $C'$ such that $\pi \circ \iota = \tilde{\iota} \circ \pi$. Indeed, $\iota$ acts as multiplication by $-1$ on the relative Jacobian of $C/S$, and thus preserves every $2$-torsion line bundle. In particular, since 
    $
    C' = \operatorname{Spec}_C\!\left(\mathcal{O}_C \oplus \eta\right)
    $
    for some $2$-torsion line bundle $\eta$, the action of $\iota$ on $\eta$ induces an automorphism of $C'$, which is precisely 
    our desired lift $\tilde{\iota}$.
    
    The subgroup 
    $$
    \langle \kappa, \tilde{\iota} \rangle \subseteq \mathrm{Aut}(C'/P)
    $$ 
    is isomorphic to $\mu_2^{\times 2}$. Taking the quotient of $C'$ by the three non-trivial elements in $\langle \kappa, \tilde{\iota} \rangle$ yields three quotient maps corresponding to $p_1,p_2$ and $\pi$ in diagram \eqref{diagram: all curves}. This defines the inverse of $\Phi_g^n$. To check that the resulting diagram is cartesian, it suffices to note that $C'$ admits a natural map to the fiber product, and that both $C' \to C_1$ and $C_1 \times_{P} C_2 \to C_1$ have degree $2$. Hence the map 
    $
    C' \longrightarrow C_1 \times_{P} C_2
    $
    must be an isomorphism;
    \item Suppose $n = (g+1)/2$.  

    In this case, $n-1$ and $g-n$ are equal, and the morphism $\Phi^n_g$ factors through 
    $
    \bigl[(\cH_{n-1} \times_{B \PGL_2} \cH_{g-n}) \setminus \Delta \big/ \mu_2\bigr],
    $
    as $n-1$ and $g-n$ are indistinguishable. For the same reason, the map described above is only defined into this quotient. Apart from this, the proof proceeds in the same way.
    \end{itemize}
\end{proof}

\subsection{Case $g$ even}\label{sec: case g even}

In this section we prove Theorem \ref{thm: g even presentation tilde} and $(i)$ in Proposition \ref{prop: root stacks}. We start with the presentations.

\begin{proof}[Proof of Theorem \ref{thm: g even presentation tilde}]
    
Suppose that $n$ is even, if $n$ is odd then the argument is the same. By Proposition \ref{prop: starting point}, \cite[Theorem 3.1]{EF09} and \cite[Corollary 4.7]{AV04}, we have a commutative diagram
\begin{equation}\label{eq: diag pres wRh}
\begin{tikzcd}[row sep=3em]
\wRH_g^n 
  \arrow[r] 
  \arrow[d] 
  & \cH_{n-1} = \bigg[ \frac{\chi^{\otimes - 2} \otimes W_n \smallsetminus \Delta}{\Gm \times \PGL_2}  \bigg]
    \arrow[d] \\
\cH_{g-n} =\bigg[ \frac{\Sym^{2g+2-2n}(V^\vee) \otimes \det(V)^{\otimes g-n} \smallsetminus \Delta}{\GL_2} \bigg]
  \arrow[r] 
  & B\PGL_2
\end{tikzcd}
\end{equation}
realizing $\wRH_g^n$ as an open in the cartesian product. Since the right vertical arrow factors though $B(\Gm \times \PGL_2) \to B \PGL_2$ mapping $(\lambda,[B]) \mapsto [B]$ and the bottom horizontal arrow factors through $B\GL_2 \to B\PGL_2$ mapping $A \mapsto [A]$, we can expand the above diagram the above diagram to a cartesian diagram

\begin{equation}\label{eq: big diag pres wRH}
\begin{tikzcd}[row sep=2em, column sep=2.5em]
\cH_{n-1} \times_{B \PGL_2} \cH_{g-n} \arrow[r] \arrow[d] 
  & \bigg[ \frac{\chi^{\otimes -2} \otimes \Sym^{2n}(V^\vee) \otimes \det(V)^{\otimes n} \smallsetminus \Delta }{\Gm \times \GL_2} \bigg] \arrow[r] \arrow[d] 
  & \bigg[ \frac{\chi^{-\otimes 2} \otimes W_n \smallsetminus \Delta}{\Gm \times \PGL_2}  \bigg] \arrow[d] \\
\bigg[  \frac{\Sym^{2g-2n}(V^\vee) \otimes \det(V)^{\otimes g-n} \smallsetminus \Delta}{\Gm \times \GL_2} \bigg] \arrow[r] \arrow[d] 
  & B(\Gm \times \GL_2) \arrow[r] \arrow[d] 
  & B(\Gm \times \PGL_2) \arrow[d] \\
\bigg[ \frac{\Sym^{2g-2n}(V^\vee) \otimes \det(V)^{\otimes g-n} \smallsetminus \Delta}{\GL_2} \bigg] \arrow[r] 
  & B\GL_2 \arrow[r] 
  & B\PGL_2
\end{tikzcd}
\end{equation}
from which the conclusion follows immediately.
\end{proof}

Next, we show that for $g$ even the rigidification morphism $r_g^n: \wRH_g^n \to \RH_g^n$ is a $\mu_2$-root-gerbe. We give three proofs.

\begin{proof}[Proof $1$ of part $(i)$ in Proposition \ref{prop: root stacks}]

This proof uses the explicit presentations of the stacks in question. 
Assume first that $n$ is even. Then the morphism $r_g^n$ between the quotient stacks in Part $(i)$ of Theorem \ref{thm: g even presentation tilde} and that in Theorem \ref{thm: pres RH} is obtained as the base change of 
\[
B(\Gm \times \GL_2) \to B(\Gm \times \GL_2), \qquad 
(\lambda,A) \mapsto (\lambda^2, \lambda A),
\]
which factors as the composition of the isomorphism 
$(\lambda,A) \mapsto (\lambda, \lambda A)$ and the root gerbe 
$(\lambda,A) \mapsto (\lambda^2,A)$. In particular, $\wRH_g^n$ is obtained from $\RH_g^n$ by adding a root of $t$.

When $n$ is odd, the proof is analogous, except that $r_g^n$ is obtained as the base change of  
\[
B(\Gm \times \GL_2) \to B(\Gm \times \GL_2), \qquad 
(\lambda,A) \mapsto (\det(A), \lambda A),
\]
which factors as the composition of the isomorphism 
$(\lambda,A) \mapsto (\lambda^{-1}, \lambda A)$, the root gerbe 
$(\lambda,A) \mapsto (\lambda^2,A)$, and the isomorphism 
$(\lambda,A) \mapsto (\lambda \det(A),A)$.  In particular, in this case $\wRH_g^n$ is obtained from $\RH_g^n$ by adding a root of $t+c_1$.
\end{proof}

The next proof does not use the explicit presentations of the various stacks and it is conceptually more satisfying.

\begin{proof}[Proof $2$ of part $(i)$ in Proposition \ref{prop: root stacks}]

Consider the universal diagram over $\RH_g^n$

\begin{equation}\label{eq: universal diag}
\begin{tikzcd}[row sep=3em, column sep=4em]
\mathcal{C} \arrow[rr, "f"] \arrow[dr, "\pi"'] & & \cP\arrow[dl, "\rho"] \\
& \RH_g^n &
\end{tikzcd}
\end{equation}
where $\pi$ is the universal curve, $\rho$ the universal Brauer-Severi of relative dimension $1$ and $f$ the double cover. Let
\[
D_{2n}, \, D_{2g+2-2n} \subseteq \cP 
\qquad \text{and} \qquad 
e_{2n}, \, e_{2g+2-2n} \subseteq \mathcal{C}
\]
denote the two universal divisors of relative degree $2n$ and $2g+2-2n$, respectively. The divisors $e_{2n}$ and $e_{2g+2-2n}$ lie in the universal Weierstrass divisor $\mathcal{W} \subseteq \mathcal{C}$.

As explained in \cite[Section 5.1]{EF09}, the Brauer--Severi scheme $\cP$ is actually a projective bundle when $g$ is even, and more precisely $\cP = \P(V_g^\vee)$  where $V_g= \pi_*( \omega_{\pi}^{\otimes g/2}( (1-g/2)\mathcal{W})$. In particular, it carries an $\cO(1)$.

The line bundle on $\mathcal{C}$
\[
\widetilde{\eta} \;=\; f^*\cO(n)(-e_{2n})
\]
has relative degree $0$, and the same holds for its square. More is true: 
\[
\widetilde{\eta}^{\otimes 2} \;\cong\; f^*(\cO(2n)(-D_{2n}))  \cong \pi^* N^\vee
\]
 for some line bundle $N$ on $\RH_g^n$ such that $\rho^*N^\vee  \cong \cO(2n)(-D_{2n})$. In particular, adjoining a square root $\sqrt{N}$ of $N$ produces a $\mu_2$-gerbe over $\RH_g^n$ whose universal curve carries a universal line bundle 
\[
\eta \;=\; \widetilde{\eta}\otimes \sqrt{N}
\]
together with a universal isomorphism $\eta^{\otimes 2} \xrightarrow{\sim} \cO$. This $\mu_2$-gerbe over $\RH_g^n$ is precisely $\wRH_g^n$. 

In order to understand the class of $N$ in $\mathrm{Pic}(\RH_g^n)$, observe that
\begin{align*}
N &= \rho_* \rho^* N \\
  &= \rho_* \big( \cO(-2n)(D_{2n}) \big) \\
  &= \rho_* \big( (\cO(-2n) \otimes \omega_{\rho}^{\otimes -n}) \otimes (\omega_{\rho}^{\otimes n}(D_{2n})) \big) \\
  &= \big(\rho_* (\cO(-2) \otimes \omega_{\rho}^\vee) \big)^{\otimes n} \otimes \rho_* (\omega_{\rho}^{\otimes n}(D_{2n})),
\end{align*}
where in the last equality we have used \cite[Lemma 2.6]{Lan23}. 

From the Euler sequence 
$$
0 \to \omega_\rho \to \cO_{\cP}(-1) \otimes V_g \to \cO_{\cP} \to 0
$$
we obtain $\omega_\rho^\vee= \cO_{\cP}(2) \otimes \det(V_g)^\vee$. Since $\rho_* {\cO_{\cP}}= \cO_{\RH_g^n}$, it follows that

$$
\rho_*(\cO(-2) \otimes \omega_\rho^\vee) = \det(V_g)^\vee
$$
whose first Chern class is $c_1$ by \cite[Proposition 5.2]{EF09} \footnote{The sign is different from that in~\cite[Proposition 5.2]{EF09}. The reason is the same as the one in footnote $1$} On the other hand, the sheaf $\omega_\rho^{\otimes n}(D_{2n})$ is pulled back from the tautological Brauer–Severi variety of relative dimension $1$ over $\cD_{2n}$. Consequently, $\rho_*(\omega_\rho^{\otimes n}(D_{2n}))$ is also pulled back from $\cD_{2n}$, where it is identified with the line bundle $\mathcal{N}_n$ described in \S\ref{sec: interpretation generators g even} or \cite[Section~5.3]{CLI}. It follows from diagram~\eqref{diagram: square of groups} and~\cite[Section 5.3]{CLI} that
$
c_1(\rho_*(\omega_\rho^{\otimes n}(D_{2n}))) = -t,
$
and thus 
\[
c_1(N) = n c_1 - t,
\] 
which modulo $2$ is congruent to $t$ if $n$ is even, and to $c_1+t$ if $n$ is odd.
\end{proof}

\begin{proof}[Proof $3$ of part $(i)$ in Proposition \ref{prop: root stacks}]
Let $\widetilde{\mathcal{J}}^0_{g} \to \cM_g$ be the universal
Picard stack parametrizing degree $0$ line bundles on genus $g$ curves. 
This is a $\Gm$-gerbe over its rigidification $\mathcal{J}^0_g$. 
Denote by $\widetilde{\mathcal{JH}}^0_{g}$ and $\mathcal{JH}^0_{g}$ their restrictions to the hyperelliptic locus. 
By \cite[Corollary~1.5]{LarsonPic}, the Brauer class of the $\Gm$-gerbe 
$\widetilde{\mathcal{JH}}^0_{g} \to \mathcal{JH}^0_{g}$ is trivial when $g$ is even. In particular, we obtain a commutative diagram
$$
\begin{tikzcd}[row sep=3.5em, column sep=4em]
\wRH_g \arrow[r] \arrow[rd] 
  & \widetilde{\mathcal{JH}}^0_{g}[2] \smallsetminus \{ \cO\} = \RH_g \times B \Gm \arrow[r] \arrow[d] 
  & \widetilde{\mathcal{JH}}^0_{g}= \mathcal{JH}^0_{g} \times B \Gm \arrow[d] \\
& \RH_g \arrow[r] 
  & \mathcal{JH}^0_{g}
\end{tikzcd}
$$
where the right square is cartesian and $\widetilde{\mathcal{JH}}^0_{g}[2] \smallsetminus \{ \cO\}$ is parametrizing non-trivial $2$-torsion line bundles. By Lemma~\ref{lem: main new lemma gerbes}, the class $[\wRH_g] \in H^2_{\text{ét}}(\RH_g,\mu_2)$ maps to zero in $H^2_{\text{ét}}(\RH_g,\Gm)$ and the conclusion follows from the exact sequence in~\S\ref{sec: kummer sequence}.
\end{proof}

\subsubsection{Interpretation of the generators of $\CH^*(\wRH_g^n)$ for $g$ even}\label{sec: interpretation generators tilde g even}

By Corollary~\ref{cor: chow wRHg for g even}, it is enough to give a geometric interpretation to the class $u$, as the other classes are pulled back from $\RH_g^n$, which is already treated in~\S\ref{sec: interpretation generators g even}.

Using the notation of proof (2) of part (i) of Proposition~\ref{prop: root stacks}, we identified $\wRH_g^n$ with the root stack over $\RH_g^n$ of the line bundle $N=\rho_*(\cO(-2n)(D_{2n}))$, whose Chern class is $nc_1-t$. Here, $\cO(1)=\cO_{\P(V_g^{\vee})}(1)$. Moreover, we showed that
\[
    \sqrt{N}=\pi_*(\eta\otimes f^*\cO(-n)(e_{2n})),
\]
where $\eta$ is the universal 2-torsion line bundle on the universal hyperelliptic curve over $\wRH_g^n$. Therefore, when $n$ is even we have 
\[
    u=c_1\left(\pi_*\left(\eta\otimes f^*\cO(-n)(e_{2n}))\right)^\vee \otimes(\det V_g)^{ \otimes \frac{n}{2} }\right),
\]
while if $n$ is odd we have
\[
    u=c_1\left(\pi_*\left(\eta\otimes f^*\cO(-n)(e_{2n}))\right)^\vee \otimes(\det V_g)^{ \otimes \frac{n+1}{2}}\right).
\]

\subsection{Case $g$ odd}\label{sec: case g odd}

In this section we prove Theorem~\ref{thm: pres wRHg for g odd}, Theorem~\ref{thm: pres g+1/2}, and conclude the proof of Proposition~\ref{prop: root stacks}, starting with the presentations. From them, we derive Theorem~\ref{thm: Chow wRHgn odd g and n}. 

\begin{proof}[Proof of Theorem~\ref{thm: pres wRHg for g odd}]
    The proof is similar to for Theorem~\ref{thm: pres RH} in~\S\ref{sec: pres for g even}. By Proposition \ref{prop: starting point}, we know that $\wRH_g^n$ is an open in $\cH_{n-1}\times_{B\PGL_2}\cH_{g-n}$. Applying~\cite[Corollary 4.7]{AV04} we get diagrams similar to~\eqref{eq: diag pres wRh} and~\eqref{eq: big diag pres wRH}. When $n$ is even, the bottom right part of the analogue of diagram~\eqref{eq: big diag pres wRH} is
    \begin{equation}\label{eqn: diagram of groups for g odd}
    \begin{tikzcd}
        B(\Gm\times\Gm\times\PGL_2)\arrow[r,"\mathrm{pr}_{1,3}"]\arrow[d,"\mathrm{pr}_{2,3}"] & B(\Gm\times\PGL_2)\arrow[d,"\mathrm{pr}_2"]\\
        B(\Gm\times\PGL_2)\arrow[r,"\mathrm{pr}_2"] & B\PGL_2
    \end{tikzcd}
    \end{equation}
    where $\mathrm{pr}$ denotes the projection to the factor indicated in the subscript.
    Instead, when $n$ is odd, we have
    \[
    \begin{tikzcd}
        B(\Gm\times\GL_2)\arrow[r,"\rho"]\arrow[d,"\mathrm{pr}_1"] & B\GL_2\arrow[d]\\
        B\GL_2\arrow[r] & B\PGL_2
    \end{tikzcd}
    \]
    where $\rho(\lambda,A)=\lambda A$ and the maps $B \GL_2 \to B \PGL_2$ are the standard ones. The statement follows. 
\end{proof}

Next we derive Theorem~\ref{thm: Chow wRHgn odd g and n} from the above presentation.

\begin{proof}[Proof of Theorem~\ref{thm: Chow wRHgn odd g and n}]
    By Theorem~\ref{thm: pres wRHg for g odd}, $\wRH_g^n$ is the composite of two $\Gm$-torsors over the quotient stack $[\P^{2n}(\Sym^{2n}(V^{\vee}))\times\P^{2g+2-2n}(V^{\vee})/\Gm\times \GL_2]$, relative to $-\xi_{2n}+(n-1)c_1$ and $-\xi_{2g+2-2n}-2t+(g-n)c_1$, respectively. The statement follows from applying the $\Gm$-bundle formula twice, starting from the Chow ring in Theorem \ref{thm: chow P x P}, together with some simplifications using the first relation of the statement.
\end{proof}

\begin{remark}\label{rmk: Vg-n on wRHgn}
In the proof of Theorem~\ref{thm: pres wRHg for g odd}, where both $n$ and $g$ are odd, the map 
\[
\rho: B(\Gm \times \GL_2) \to B\GL_2
\] 
corresponds to the morphism $\wRH_g^n \to \cH_{g-n}$. In particular, if we denote by $V_{g-n}$ the rank-$2$ vector bundle on $\cH_{g-n}$ introduced in \S\ref{lem: interpretation generators g even}, then its pullback to $\wRH_g^n$ has first Chern class $-c_1-2t$ and second Chern class $c_2-tc_1+t^2$.
\end{remark}

\begin{proof}[Proof of Theorem~\ref{thm: pres g+1/2}]
    Recall that we have a $\mu_2$-torsor
    $$
    (\cH_{(g-1)/2}\times_{B\PGL_2}\cH_{(g-1)/2})\setminus\Delta\rightarrow\wRH_g^{(g+1)/2}
    $$
    which is a $\mu_2$-torsor under the action exchanging the two factors. Suppose that $n=(g+1)/2$ is even. Recall that $G=(\Gm\times\Gm)\rtimes\mu_2$. Since the restriction of $A\otimes W_{(g+1)/2}$ to $\Gm\times\Gm\times\PGL_2$ is isomorphic to $(\chi^{(1)})^{-2}\otimes W_{\frac{g+1}{2}}\times (\chi^{(2)})^{-2}\otimes W_{\frac{g+1}{2}}$, there is a morphism
    \[
    \begin{tikzcd}
        \pi:\left[\frac{(\chi^{(1)})^{-2} \otimes W_{\frac{g+1}{2}} \times (\chi^{(2)})^{-2} \otimes W_{\frac{g+1}{2}} \smallsetminus \Delta}{\Gm^{(1)}\times\Gm^{(2)} \times\PGL_2}\right]\arrow[r] & \left[\frac{A\otimes W_{\frac{g+1}{2}}}{G\times\PGL_2}\right]
    \end{tikzcd}
    \]
   The domain of $\pi$ admits a $\mu_2$-action given by swapping the two factors and the two copies of $\Gm$. The source of $\pi$ is isomorphic to $(\cH_{(g-1)/2}\times_{B\PGL_2}\cH_{(g-1)/2})\setminus\Delta$ and the two actions of $\mu_2$ are identified under this isomorphism, hence the target of $\pi$ is isomorphic to $\wRH_g^{(g+1)/2}$, as wanted. The case when $n$ is odd is completely analogous.
\end{proof}

Now, we proceed on proving Proposition~\ref{prop: root stacks}.
First, we deal with the case $n \in \{1,\ldots,(g-1)/2\}$ is even, proving that in this case the morphism $r_g^n$ is a root gerbe. As for the case $g$ even, we present two proofs: one that uses the explicit presentation of $\wRH_g^n$ as a quotient stacks and one which is more geometric.

\begin{proof}[Proof 1 of part (ii) of Proposition~\ref{prop: root stacks}]
    The morphism $r_g^n$ between the presentation of $\wRH_g^n$ as the quotient stack from part (ii) of Theorem~\ref{thm: pres wRHg for g odd} and the presentation of $\RH_g^n$ in~\cite[Theorem~1.19]{CLI}  is obtained by base change from the morphism
    \[
    \begin{tikzcd}
        B(\Gm\times\Gm\times\PGL_2)\arrow[r] & B(\Gm\times\Gm\times\PGL_2), & (\lambda_1,\lambda_2,[A])\mapsto(\lambda_1^2,\lambda_1^{-1}\lambda_2,[A]).
    \end{tikzcd}
    \]
    Therefore, it is a $\mu_2$-root gerbe relative to the first Chern class $x_1$ of the standard character of the first copy of $\Gm$. With the notation of~\cite{CLI}, this corresponds to $-\xi_{2n}$, and the statement follows.
\end{proof}

\begin{proof}[Proof 2 of part (ii) of Proposition~\ref{prop: root stacks}]
    Consider the same universal diagram as in~\eqref{eq: universal diag}, and use the same notation for the universal divisors of $\cP$. Recall that $n$ is even, and consider the line bundle
    \[
        \widetilde{\eta}:=f^*(\omega_\rho^{\otimes n/2})\otimes\cO_{\mathcal{C}}(e_{2n}).
    \]
    Then, $\widetilde{\eta}^{\otimes2}= f^*(\omega_{\rho}^{\otimes n}\otimes\cO_{\cP}(D_{2n}))$ is trivial on the fibers of $\pi$, hence $\widetilde{\eta}^{\otimes 2}= \pi^*M^{\vee}$, for some a line bundle $M$ on $\RH_g^n$ such that $\rho^*M^{\vee}=\omega_{\rho}^{\otimes n}\otimes\cO_{\cP}(D_{2n})$. Given a root $\sqrt{M}$ of $M$, then we can form $\eta=\widetilde{\eta}\otimes\sqrt{M}$, obtaining the universal 2-torsion line bundle with an isomorphism $\eta^{\otimes 2} \xrightarrow{\sim} \cO$. This shows that $\sqrt{(\RH_g^n,M)}\cong\wRH_g^n$, and we are only left with computing $c_1(M)$.
    
    Let $\cL$ be the line bundle on $\cP$ such that $f_*\cO=\cO\oplus\cL$, which has degree $-g-1$ on the fibers of $\cP\rightarrow\RH_g^n$. Then,
    \begin{align*}
        M^{\vee}&=\pi_*f^*(\omega_{\rho}^{\otimes n}\otimes\cO(D_{2n}))\\
        &\cong \rho_*f_*f^*(\omega_{\rho}^{\otimes n}\otimes\cO(D_{2n}))\\
        &\cong\rho_*(\omega_{\rho}^{\otimes n}\otimes\cO(D_{2n})\otimes(\cO\oplus\cL))\\
        &\cong\rho_*(\omega_{\rho}^{\otimes n}\otimes\cO(D_{2n})),
    \end{align*}
    where we have used that $\omega_{\rho}^{\otimes n}\otimes\cO(D_{2n})\otimes\cL$ has negative degree on the fibers of $\rho$. The line bundle we obtained is called $\mathcal{N}_n$ in~\cite[Section 5.3]{CLI}, where it is also shown to have first Chern class $\xi_{2n}$. This concludes.
\end{proof}

\begin{remark}
    When $g$ is odd and $1\leq n\leq(g-1)/2$, the Chow ring of $\RH_g^n\rightarrow\cD_{2n,2g+2-2n}$ is the $\mu_2$-root gerbe given by adding a square root $t$ of $\xi_{2n}+\xi_{2g+2-2n}$, see~\cite[Lemma 1.15]{CLI}. It follows that in point (ii) of Proposition~\ref{prop: root stacks} we could have added a root of $\xi_{2g+2-2n}$ instead of $\xi_{2n}$. Finally, this shows that $\wRH_g^n\rightarrow\cD_{2n,2g+2-2n}$ is the a double root gerbe associated to $\xi_{2n}$ and $\xi_{2g+2-2n}$, as one could also see directly from the presentations.
\end{remark}

\begin{remark}\label{rmk: no third proof for g odd}
    By \cite[Corollary~1.5]{LarsonPic}, the Brauer class of the $\Gm$-gerbe 
    $\widetilde{\mathcal{JH}}^0_{g} \to \mathcal{JH}^0_{g}$ is non-trivial when $g$ is odd, 
    which prevents us from obtaining a third, distinct proof of Theorem~\ref{prop: root stacks}. 
    In this sense, the statement that when both $g$ and $n$ are odd the map $r_g^n$ is not a root gerbe can be viewed as a strengthening of that result.
\end{remark}

Now, we are ready to prove that cases (i) and (ii) in Proposition~\ref{prop: root stacks} are the only ones where we get a root stack.

\begin{proof}[Conclusion of the proof of Proposition~\ref{prop: root stacks}]
    We claim that whenever $g$ is odd and $n$ is odd or equal to $(g+1)/2$ the pullback $(r_g^n)^*:\Pic(\RH_g^n)\rightarrow\Pic(\wRH_g^n)$ is an isomorphism. The statement then follows from the claim and Lemma~\ref{lem: criterion root gerbe with Pic}.
    
    \begin{enumerate}
        \item[$\bullet$] Suppose that $n$ is odd and $n\not=(g+1)/2$.
        
        Then $\RH_g^n \to \cD_{2n,\,2g+2-2n}$ is a root gerbe by~\cite[Lemma~1.15]{CLI}, 
        and hence the pullback $\Pic(\cD_{2n,\,2g+2-2n}) \to \Pic(\RH_g^n)$ has index two. Since $\Pic(\RH_g^n) \to \Pic(\wRH_g^n)$ has either index~$2$ or is an isomorphism, it suffices to show that the image of the pullback $\rho : \Pic(\cD_{2n,\,2g+2-2n}) \longrightarrow \Pic(\wRH_g^n)$ does not have index~$4$. 
        
        By Theorem~\ref{thm: pres wRHg for g odd} and~\cite[Section 1.2.3]{CLI}, there is a commutative diagram
    \[
    \begin{tikzcd}
        \Pic(B(\Gm\times\Gm\times\PGL_2))\arrow[d,twoheadrightarrow]\arrow[r,"\phi^*"] & \Pic(B(\Gm\times\GL_2))\arrow[d,twoheadrightarrow]\\
        \Pic(\cD_{2n,2g+2-2n})\arrow[r] & \Pic(\wRH_g^n)
    \end{tikzcd}
    \]
     where $\phi^*$ is the pullback along $\phi:B(\Gm\times\GL_2)\rightarrow B(\Gm\times\Gm\times\PGL_2)$ induced by $(\lambda,A)\mapsto(\det A,\lambda^2\det A,[A])$, and the two vertical arrow are surjective. The conclusion then follows from the fact that the image of $\phi^*$ has index~$2$, and therefore the bottom map has index at most~$2$.

    \item[$\bullet$] Suppose that $n=(g+1)/2$ is odd 

    The base change of  $\wRH_g^n\rightarrow[\cD_{g+1,g+1}/\mu_2]$ along the projection $\cD_{g+1,g+1}\rightarrow[\cD_{g+1,g+1}/\mu_2]$ is just $(\cH_{n-1} \times_{B\PGL_2} \cH_{g-n}) \smallsetminus \Delta \to \cD_{g+1,g+1}$ which is not a root by the argument in the previous bullet.

    \item[$\bullet$] Suppose $n=(g+1)/2$ is even.

    Then, $\RH_g^{(g+1)/2}\rightarrow[\cD_{g+1,g+1}/\mu_2]$ is a root gerbe by~\cite[Lemma 1.1. and Section 1.2.1]{CLII}. Thus, it is enough to show that $\Pic([\cD_{g+1,g+1}/\mu_2]) \to \Pic(\wRH_g^{(g+1)/2})$ has index $<4$ to conclude. By Theorem~\ref{thm: pres g+1/2} and~\cite[Theorem 1.3]{CLII}, there is a commutative diagram with surjective vertical homomorphisms
    \[
    \begin{tikzcd}
        \Pic(B(G\times\PGL_2))\arrow[r,"\phi^*"]\arrow[d,twoheadrightarrow] & \Pic(B(G\times\PGL_2))\arrow[d,twoheadrightarrow]\\
        \Pic([\cD_{g+1,g+1}/\mu_2])\arrow[r] & \Pic(\wRH_g^{\frac{g+1}{2}})
    \end{tikzcd}
    \]
    where $\phi^*$ is the pullback along $B(G\times\PGL_2)\rightarrow B(G\times\PGL_2)$ induced by the endomorphism of $G$ in Notation~\ref{not: representation A of G}. Recall that $\Pic(BG)\cong\beta_1\Z\oplus\gamma\Z/2\Z$, see~\cite[Theorem 5.2]{Lar19}. Then, $\phi^*(\beta_1)=2\beta_1+\gamma$ and $\phi^*\gamma=\gamma$, thus $\phi^*$ has index 2, concluding this last case.
    \end{enumerate}
\end{proof}

\subsubsection{Interpretation of the generators of $\CH^*(\wRH_g^n)$ for odd $g$}\label{sec: interpretation of the generators tilde g odd}

In this section $g$ is always odd and $1 \leq n \leq (g-1)/2$. We distinguish two cases.

Suppose that $n$ is even. With the notation of Corollary~\ref{cor: chow wRHg for g odd n even}, we aim to provide natural vector bundles whose Chern classes are $c_2$, $c_3$, $t_{2n}$ and $t_{2g+2-2n}$, as $c_1\in I$. For the first two, denote with $\cE_{g-n}$ the rank 3 vector bundle on $\cH_{g-n}$ functorially defined as
\[
    \cE_{g-n}=\pi_*\omega_{\pi}^{\vee}(W)
\]
for every family of hyperelliptic curves $C\xrightarrow{\pi}S$ with Weierstrass divisor $W\subset C$. Note that $c_2$ and $c_3$ are pulled back from $B\PGL_2$, and by~\cite[Section~7]{DL18} we have
\[
    c_2 = c_2(\cE_{g-n}), \qquad c_3 = c_3(\cE_{g-n}),
\]
on $\cH_{g-n}$, and the same equalities remain true after pulling back to $\wRH_g^n$. Furthermore, when $n-1 \geq 2$, one also has $c_i = c_i(\cE_{n-1})$.

For the other two classes, recall that we are thinking of $\wRH_g^n$ as a double root gerbe over $\cD_{2n,2g+2-2n}\subset\cD_{2n}\times_{B\PGL_2}\cD_{2g+2-2n}$ relative to $\xi_{2n}$ and $\xi_{2g+2-2n}$. Now, $\cH_{n-1}\rightarrow\cD_{2g+2-2n}$ is the root gerbe relative to $\xi_{2n}$, and similarly for $\cH_{g-n}$, see~\cite[Section 5.3]{CLI}. Thus, $t_{2n}$ and $t_{2g+2-2n}$ are pulled back from $\cH_{n-1}$ and $\cH_{g-n}$, respectively. Recall the line bundles $\cL_{n-1}$ and $\cL_{g-n}$ introduced in~\S\ref{sec: interpretation generators tilde g even}. Then, by the discussion in~\cite[Section~5.3]{CLI} together with diagram~\eqref{eqn: diagram of groups for g odd}, we deduce
\[
   t_{2n} = c_1(\cL_{n-1}), \qquad t_{2g+2-2n} = c_1(\cL_{g-n}).
\]

This completes the discussion for the case where $n$ is even.

Now, assume $n$ to be odd. By Theorem~\ref{thm: Chow wRHgn odd g and n}, the generators are $t$, $c_1$ and $c_2$. Since $g-n$ is even, as in~\S\ref{sec: interpretation generators g even} we have a well-defined rank $2$ vector bundle $V_{g-n}$ and by Remark \ref{rmk: Vg-n on wRHgn}, we have
\begin{align*}
    c_1(V_{g-n})=-c_1-2t, && c_2(V_{g-n})=c_2+tc_1+t^2.
\end{align*}
Using the presentation of $\wRH_g^n$ in Theorem~\ref{thm: pres wRHg for g odd} and the presentation of $\cH_g$ in~\cite[Corollary 4.7]{AV04}, we have a commutative diagram
\[
\begin{tikzcd}
    \wRH_g^n\arrow[r]\arrow[d] & \cH_g\arrow[d]\\
    B(\Gm\times\GL_2)\arrow[r,"\phi"] & B(\Gm\times\PGL_2)
\end{tikzcd}
\]
where the vertical morphisms induce surjections on Chow rings by pullback, and $\phi$ is induced by $(\lambda,A)\mapsto(\lambda\cdot\det A,[A])$. In particular, if $x$ denotes the first Chern class of the standard character of the copy of $\Gm$ in the bottom right corner, we have $\phi^*(x)=c_1+t$. From~\cite[Section~7]{DL18}, it follows that $c_1(\cL_g)=c_1+t$ and thus
\[
    t = c_1\!\left(\cL_g^{\vee}\otimes \det(V_{g-n}^{\vee})\right), 
    \qquad 
    c_1 = c_1\!\left(\cL_g \otimes V_{g-n}\right), \qquad c_2= c_2( \cL_g \otimes V_{g-n}).
\]

\appendix

\appendixsection{Gerbes induced by homomorphisms of groups}\label{sec: preliminaries on gerbes}
In this appendix we give a criterion for determining the induced $G$-gerbe given an $H$-gerbe and an homomorphism $H\rightarrow G$ of abelian sheaves over an algebraic stack $\cY$.
Along the way, we also recall some known facts about abelian gerbes, in particular regarding root gerbes; we refer to~\cite[Section 3.4]{Gir65},~\cite[Appendix B]{AGV08}, and~\cite{Cad04} for the details. We will be mostly interested in the case where the groups involved are either $\mu_2$ or $\Gm$, so at times we will specialize the discussion to that situation.

\begin{setup}\label{setup: preliminaries gerbes}
    Throughout this appendix we work over a fixed Noetherian base scheme $S$ and denote with $\cY$ an algebraic stack over $S$. We denote by $H$ and $G$ two abelian group schemes that are flat, separated, and of finite presentation over $S$, which we will also interpret as abelian sheaves on $\cY$.
\end{setup}

Recall that, in the above setting, the second étale cohomology group $\mathrm{H}_{\text{fppf}}^2(\cY,G)$ is in bijective correspondence with classes of isomorphism of $G$-gerbes. This allows the use of cohomological methods when studying abelian gerbes.

\subsection{Remarks on the Kummer sequence}\label{sec: kummer sequence}

One application of the cohomological characterization of abelian gerbes is to the Kummer sequence; in this subsection, we assume $S$ to be the spectrum of a field of characteristic different from 2. Then, the Kummer sequence for $\mu_2$ reads
\[
\begin{tikzcd}
    0\arrow[r] & \mu_2\arrow[r,"i"] & \Gm\arrow[r] & \Gm\arrow[r] & 0,
\end{tikzcd}
\]
and induces the exact sequence
\begin{equation}\label{eqn: kummer}
\begin{tikzcd}
    \Pic(\cY)\cong\mathrm{H}_{\text{ét}}^1(\cY,\Gm)\arrow[r] & \mathrm{H}_{\text{ét}}^2(\cY,\mu_2)\arrow[r,"i_*"] & \mathrm{H}_{\text{ét}}^2(\cY,\Gm).
\end{tikzcd}
\end{equation}
In this situation, the connecting homomorphism $\mathrm{Pic}(\cY) \to \mathrm{H}_{\text{ét}}^2(\cY,\mu_2)$ admits a particularly explicit description: it sends a line bundle $L$ to the root gerbe $\sqrt{(\cY,L)}$.
Here, we used the isomorphisms
\[
\mathrm{H}_{\mathrm{fppf}}^2(\cY,G)\cong \mathrm{H}_{\acute{e}t}^2(\cY,G)
\]
for $G=\mu_2$ or $G=\Gm$, the first of which holds only under the assumption that the characteristic is different from $2$. In particular, a $\mu_2$-gerbe $\cX\rightarrow\cY$ is a root gerbe if and only if the associated class in $\mathrm{H}_{\text{ét}}^2(\cY,\mu_2)$ has trivial image under the homomorphism $\mathrm{H}_{\text{ét}}^2(\cY,\mu_2)\rightarrow\mathrm{H}_{\text{ét}}^2(\cY,\Gm)$ induced by the canonical immersion $\mu_2\hookrightarrow\Gm$.

The following is a useful criterion for understanding if a $\mu_2$-gerbe over $\cY$ is a root gerbe, which we will exploit to prove part of Proposition~\ref{prop: root stacks}. We recall the proof for the convenience of the reader.

\begin{lemma}\label{lem: criterion root gerbe with Pic}
    Let $f:\cX\rightarrow\cY$ be a $\mu_2$-gerbe between connected algebraic stacks locally of finite presentation over $S$. Then, $f$ is not a root gerbe if and only if $f^*:\Pic(\cY)\rightarrow\Pic(\cX)$ is an isomorphism.
\end{lemma}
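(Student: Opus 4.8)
The plan is to analyze $f^*$ through the weight decomposition of line bundles on a $\mu_2$-gerbe. Since $\cX$ is connected, the inertial $\mu_2$-action on the fibers of any line bundle $L$ on $\cX$ is through a single character, which yields a well-defined weight homomorphism $w\colon \Pic(\cX) \to \widehat{\mu_2} \cong \Z/2$. The first step is to record the exact sequence
$$
0 \longrightarrow \Pic(\cY) \xrightarrow{\ f^*\ } \Pic(\cX) \xrightarrow{\ w\ } \Z/2,
$$
so that $f^*$ is an isomorphism precisely when $w$ is the zero map.

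To establish this sequence I would argue as follows. For injectivity of $f^*$ I would use that $f$ is a gerbe, hence $f_*\cO_\cX \cong \cO_\cY$; the projection formula then gives $f_* f^* L \cong L$, so $f^* L \cong \cO_\cX$ forces $L \cong \cO_\cY$. For exactness in the middle I would observe that pullbacks visibly have weight $0$, and conversely that every weight-$0$ line bundle descends along the gerbe: working fppf-locally, where the gerbe is trivial, the weight-$0$ condition means the line bundle is equivariant for the trivial character and hence pulled back from the base, and such descent data glue to a line bundle on $\cY$. In other words, $f^*$ identifies $\Pic(\cY)$ with the weight-$0$ summand $\ker w$ of $\Pic(\cX)$.

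The core of the argument is then the equivalence: $f$ is a root gerbe if and only if $w$ is surjective, i.e. there exists a line bundle on $\cX$ of weight $1$. In the forward direction, a root gerbe $\sqrt{(\cY,L)}$ carries its tautological line bundle $M$ with $M^{\otimes 2} \cong f^* L$, and $M$ has weight $1$ by construction. Conversely, given a weight-$1$ line bundle $M$, its square has weight $0$, so $M^{\otimes 2} \cong f^* L$ for a unique $L \in \Pic(\cY)$ by the previous step; the pair $(M,\, M^{\otimes 2} \xrightarrow{\sim} f^* L)$ defines a morphism $\cX \to \sqrt{(\cY,L)}$ of $\mu_2$-gerbes over $\cY$, which is automatically an isomorphism. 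Combining everything, $f^*$ is an isomorphism $\iff w = 0 \iff$ no weight-$1$ line bundle exists $\iff f$ is not a root gerbe, which is exactly the claim.

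I expect the main technical obstacle to be the clean identification of the image of $f^*$ with the weight-$0$ part, that is, the descent of weight-$0$ line bundles along the $\mu_2$-gerbe, since this is the one point where the gerbe structure (rather than a mere fibration) is genuinely used; the remaining implications are formal once this is in place. The connectedness hypothesis enters precisely in making $w$ a single well-defined $\Z/2$-valued invariant, and this is consistent with the cohomological picture of \S\ref{sec: kummer sequence}, where being a root gerbe corresponds to having trivial image in $\mathrm{H}^2_{\text{ét}}(\cY,\Gm)$.
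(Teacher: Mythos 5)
Your proof is correct, but it takes a genuinely different route from the paper's. The paper's proof is essentially a citation: it invokes the exact sequence
\[
0 \longrightarrow \Pic(\cY) \xrightarrow{\ f^*\ } \Pic(\cX) \longrightarrow \Z/2\Z \xrightarrow{\ \phi\ } \mathrm{H}^2_{\text{\'et}}(\cY,\Gm)
\]
from Theorem 1.2 and Remark 1.4 of the cited work of Lopez (where $\phi$ sends $-1$ to the image of the gerbe class $[\cX\to\cY]$ under $\mathrm{H}^2_{\text{\'et}}(\cY,\mu_2)\to\mathrm{H}^2_{\text{\'et}}(\cY,\Gm)$), and then combines exactness with the Kummer-sequence characterization of root gerbes from \S A.1: $f$ is a root gerbe iff its class dies in $\mathrm{H}^2_{\text{\'et}}(\cY,\Gm)$, iff $\phi=0$, iff $f^*$ fails to be surjective. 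You instead reconstruct the first three terms of that sequence by hand (the weight homomorphism $w$, injectivity of $f^*$ via $f_*\cO_\cX\cong\cO_\cY$ and the projection formula, and descent of weight-$0$ line bundles), and you replace the fourth term --- the Brauer-group obstruction --- by a direct construction: a weight-$1$ line bundle $M$ gives $M^{\otimes 2}\cong f^*L$, hence by the universal property of the root stack a morphism $\cX\to\sqrt{(\cY,L)}$ over $\cY$, which is an isomorphism because it is banded by the identity --- and the identity banding is exactly what the weight-$1$ hypothesis provides, a point you should make explicit, since an arbitrary morphism of stacks over $\cY$ between two $\mu_2$-gerbes (e.g.\ one factoring through $\cY$) need not be an isomorphism. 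What each approach buys: the paper's proof is shorter, stays consistent with the cohomological viewpoint used elsewhere (notably the third proof of Proposition~\ref{prop: root stacks}, which manipulates Brauer classes), and outsources the hard content to the literature; yours is self-contained, avoids $\mathrm{H}^2_{\text{\'et}}(\cY,\Gm)$ entirely, produces an explicit square-root witness, and in effect re-proves the portion of the cited exact sequence that the lemma actually needs. The only technical points to nail down in your version are the ones you flagged: descent of weight-$0$ line bundles along the gerbe (which uses that $\mu_2$ is diagonalizable, hence the eigenspace decomposition is fppf-locally compatible) and the banding compatibility just mentioned; both are standard.
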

\begin{proof}
    By~\cite[Theorem 1.2, Remark 1.4]{Lop23}, given such a $\mu_2$-gerbe $h:\cX\rightarrow\cY$, there exists an exact sequence
    \[
    \begin{tikzcd}
        0\arrow[r] & \Pic(\cY)\arrow[r,"f^*"] & \Pic(\cX)\arrow[r] & \Z/2\Z\arrow[r,"\phi"] & \mathrm{H}_{\text{ét}}^2(\cY,\Gm),
    \end{tikzcd}
    \]
    where $\phi$ sends $-1$ to the image of $[\cX\rightarrow\cY]\in\mathrm{H}_{\text{ét}}^2(\cY,\mu_2)$ along the homomorphism $\mathrm{H}_{\text{ét}}^2(\cY,\mu_2)\rightarrow\mathrm{H}_{\text{ét}}^2(\cY,\Gm)$. It follows that $\cX\rightarrow\cY$ is not a root gerbe if and only if $\phi$ is not the zero map, which is equivalent to $f^*$ being an isomorphism.
\end{proof}

\subsection{Characterizations of gerbes induced by homomorphisms of groups}

Let $\phi:H\rightarrow G$ be an homomorphism of abelian group schemes over $S$. Then, for every $H$-gerbe $h:\cX\rightarrow\cY$ there is an induced $G$-gerbe
\[
\begin{tikzcd}
    g:(BG\times_{S}\cX)\rigid\!\! H\arrow[r] & \cY
\end{tikzcd}
\]
where the $H$-rigidification is with respect to the $H$-2-structure on $BG\times_{S}\cX$ where $H$ acts on the automorphism of $\cX$ via the $H$-gerbe structure on $\cX$, and on $BG$ via $u\cdot v=v-\phi(u)$ for $u\in H$, $v\in G$. We refer to~\cite[Section 5]{Rom05},~\cite[Section 5]{ACV03},~\cite[Appendix A]{AOV08} and~\cite[Appendix C]{AGV08} for details on the rigidification operation.

Moreover, the composite of $\cX\rightarrow BG\times_{S}\cX$ and the rigidification map $ \cX\rightarrow BG\times_{S}\cX \to (BG\times_{S}\cX)\rigid\!\! H$ defines a canonical morphism
\[
\begin{tikzcd}
    \Phi:\cX\arrow[r] & (BG\times_{S}\cX)\rigid\!\!H
\end{tikzcd}
\]
such that $h=g\circ\Phi$, and it is $H$-2-equivariant with respect to the action of $H$ on $\cX$, in the sense of~\cite[Appendix C.2]{AGV08}. We want to show that this map is unique, in the following sense.

Every $G$-2-structure on a stack $\cZ$ with $G$-rigidification $\cY$ has an associated $G$-2-action map
\[
\begin{tikzcd}
    \rho:BG\times_{S}\cZ\arrow[r] & \cZ
\end{tikzcd}
\]
such that $BG\times_{S}\cZ\xrightarrow{\cong}\cZ\times_{\cY}\cZ$ is an isomorphism, see~\cite[Appendix C.3]{AGV08}. For us, the action of $G$ on the product $P_1 \times_S P_2$ of two $G$-torsors $P_1,P_2$ is given by 
$$
v \cdot (p_1,p_2)= (v^{-1} \cdot p_1, v \cdot p_2) \ \text{for} \ v \in G \ \text{and} \ p_1 \in P_1, p_2 \in P_2.
$$
Then, the quotient $(P_1 \times_S P_2)/G$ admits a natural (left) action of $G$ given by that on $P_2$.

Now, suppose given a commutative diagram
\begin{equation}\label{eq: commutative diag gerbes}
\begin{tikzcd}
    \cX\arrow[rd,"h"']\arrow[rr,"f"] && \cZ\arrow[ld,"g"]\\
    & \cY
\end{tikzcd}
\end{equation}
where $h$ and $g$ are $H$ and $G$-gerbes respectively, while $f$ is $H$-2-equivariant with respect to $\phi$, in the sense of~\cite[Appendix C.2]{AGV08}.
Consider the morphism $\psi:BG\times_{S}\cX\rightarrow\cZ$ defined as the composite
\[
\begin{tikzcd}
    BG\times_{S}\cX\arrow[r,"1\times f"] & BG\times_{S}\cZ\arrow[r,"\rho"] & \cZ,
\end{tikzcd}
\]
where $\rho$ is the $BG$-action map on $\cZ$. The next lemma shows that $\psi$ identifies $\cZ$ with the $H$-rigidification of $\cX\times_S BG$, and $f$ with the canonical morphism $\Phi$.

\begin{lemma}\label{lem: isom gerbes}
    With the notation above, the morphism $\psi$ induces an isomorphism
    \[
    \begin{tikzcd}
        \overline{\psi}:(BG\times_{S}\cX)\rigid\!\! H\arrow[r] & \cZ,
    \end{tikzcd}
    \]
    such that $\overline{\psi}\circ\Phi=f$.
\end{lemma}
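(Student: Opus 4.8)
The plan is to show that $\psi$ is invariant under the $H$-2-action on $BG\times_{S}\cX$ so that it factors through the rigidification, and then to recognize the factored map $\overline{\psi}$ as a morphism of $G$-gerbes over $\cY$ inducing the identity on the band $G$, which is automatically an isomorphism. Throughout I would write the abelian group $G$ additively and denote the $G$-2-action $\rho$ on $\cZ$ by $v\cdot z$, so that $\psi(v,x)=v\cdot f(x)$ on a pair $(v,x)$, with $v$ a $G$-torsor and $x$ an object of $\cX$. First I would verify invariance: by the definition of the $H$-2-structure on $BG\times_{S}\cX$, an element $u\in H$ sends $(v,x)$ to $(v-\phi(u),\,u\cdot x)$, whence $\psi(v-\phi(u),u\cdot x)=(v-\phi(u))\cdot f(u\cdot x)$. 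Since $f$ is $H$-2-equivariant with respect to $\phi$ in the sense of~\cite[Appendix C.2]{AGV08}, there is a canonical $2$-isomorphism $f(u\cdot x)\cong \phi(u)\cdot f(x)$, and combining the two $G$-translations yields $(v-\phi(u))\cdot(\phi(u)\cdot f(x))\cong v\cdot f(x)=\psi(v,x)$; the essential point is the cancellation $-\phi(u)+\phi(u)=0$ together with the associativity of the $G$-2-action. These $2$-isomorphisms constitute the descent datum exhibiting $\psi$ as $H$-invariant, producing the desired map $\overline{\psi}:(BG\times_{S}\cX)\rigid H\to\cZ$.

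Next I would identify $\overline{\psi}$ as a morphism of $G$-gerbes over $\cY$. On objects, $g\circ\psi(v,x)=g(v\cdot f(x))=g(f(x))=h(x)$, because the $G$-translation does not move the image in $\cY$ and $g\circ f=h$ by the commutativity of~\eqref{eq: commutative diag gerbes}; hence $g\circ\psi$ equals $h\circ\mathrm{pr}_{\cX}$, which is precisely the structure map of $BG\times_{S}\cX$ over $\cY$ before rigidification, so after descent $g\circ\overline{\psi}$ is the canonical $G$-gerbe structure map of $(BG\times_{S}\cX)\rigid H$. Moreover $\psi$ is $G$-equivariant, since translating the $BG$-coordinate by $w\in G$ gives $\psi(v+w,x)=(v+w)\cdot f(x)=w\cdot\psi(v,x)$, and this compatibility descends to $\overline{\psi}$. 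Thus $\overline{\psi}$ is a morphism between two $G$-gerbes over the same base $\cY$ compatible with the $G$-bandings, and by the standard rigidity of gerbes banded by a fixed abelian group (see~\cite[Section 3.4]{Gir65} and~\cite[Appendix B]{AGV08}) any such morphism is an isomorphism.

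Finally, the relation $\overline{\psi}\circ\Phi=f$ is immediate: $\Phi$ is the composite of the map $\cX\to BG\times_{S}\cX$ sending $x$ to the pair $(0,x)$ formed by the trivial $G$-torsor and $x$, followed by the rigidification map, and $\psi(0,x)=0\cdot f(x)=f(x)$ since the trivial torsor acts as the identity of the $G$-2-action. The step I expect to demand real care is the first one: the invariance of $\psi$ is not a condition on objects alone but requires assembling the $2$-isomorphisms coming from the $H$-2-equivariance of $f$ into a genuine descent datum along the $H$-rigidification and checking the attendant coherence (cocycle) condition. Once this $2$-categorical bookkeeping is in place, the identification of $\overline{\psi}$ as a banded morphism of $G$-gerbes makes the isomorphism statement formal, avoiding any hand-constructed inverse.
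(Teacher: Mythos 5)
Your proposal follows the same overall strategy as the paper's proof: exploit the cancellation $-\phi(u)+\phi(u)=0$ coming from the $H$-2-equivariance of $f$ to factor $\psi$ through the rigidification, observe that the factored map is compatible with the $G$-banding, and conclude because a $G$-2-equivariant morphism of $G$-gerbes over the same base is automatically an isomorphism; your final check that $\overline{\psi}\circ\Phi=f$ on the trivial-torsor component is also the right one (the paper leaves it implicit). The difference, and the place where your write-up has a genuine gap, is the justification of the factorization itself. You present the $H$-2-structure as an action moving objects, $(v,x)\mapsto(v-\phi(u),\,u\cdot x)$, and propose to exhibit $\psi$ as invariant by assembling $2$-isomorphisms into a descent datum, explicitly deferring the coherence (cocycle) verification as the step demanding ``real care.'' But an $H$-2-structure is an embedding of $H$ into the inertia: an element $u\in H(T)$ is an \emph{automorphism} of each object $(P,x)$, not a map to a different object, and the criterion for factoring through the rigidification --- \cite[Theorem 5.1.5]{ACV03}, which is what the paper invokes --- is the strict, property-level condition that for every object $\xi$ the composite $H(T)\hookrightarrow\Aut_T(\xi)\rightarrow\Aut_T(\psi(\xi))$ be trivial. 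There is no descent datum to construct and no cocycle to check: factoring is a property, not extra data.

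The paper verifies this property smooth-locally on $\cY$, where both gerbes may be trivialized, so that $\Aut_T(\xi)\cong G(T)\times H(T)$, the map induced by $\psi$ on automorphisms becomes $(v,u)\mapsto v+\phi(u)$, and the $H$-embedding is $u\mapsto(-\phi(u),u)$; the composite is visibly zero, and the banding compatibility is the equally visible statement that $v\mapsto(v,0)\mapsto v$ is the identity. Your cancellation computation is exactly this local computation, so the defect is one of framing rather than of substance: ``invariance up to coherent $2$-isomorphism under an action on objects'' is the wrong-shaped statement to feed into the universal property of rigidification, and the $2$-categorical bookkeeping you flag as the hard remaining step evaporates once the condition is stated on automorphism groups and checked after smooth-local trivialization. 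To repair your proof, replace the descent-datum argument by the citation of \cite[Theorem 5.1.5]{ACV03} together with the local reduction; the rest of your argument then goes through as written.
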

\begin{proof}
    By~\cite[Theorem 5.1.5]{ACV03}, the morphism $\psi$ passes to the rigidification if and only if for every scheme $T$ and object $\xi\in(BG\times_{S}\cX)(T)$ the composite $H(T)\hookrightarrow\Aut_T(\xi)\rightarrow\Aut_T(\psi(\xi))$ is trivial. This can be checked smooth-locally on $\cY$, as well as for the rest of the statement. In particular, we may assume that $\cY$ is a scheme and both $h$ and $g$ are trivial gerbes, in particular $\cX\cong\cY\times_{S}BH$ and $\cZ\cong\cY\times_{S}BG$.

    Then, $\psi$ is the composite of
    \[
    \begin{tikzcd}
        \cY\times_{S}BG\times_{S}BH\arrow[r,"1\times f"] & \cY\times_{S}BG\times_{S}BG\arrow[r,"\rho"] & \cY\times_{S}BG
    \end{tikzcd}
    \]
    where $\rho$ is the multiplication map. Since $f$ is $H$-2-equivariant with respect to $\phi$, for every $\cY$-scheme $T$ and object $\xi\in(\cY\times_{S}BH\times_{S}BG)(T)$ the induced morphism $\Aut_T(\xi)\rightarrow\Aut_T(\psi(\xi))$ is the composite of
    \begin{equation}\label{eq: psi on automorphisms}
    \begin{tikzcd}[row sep=tiny, column sep=small]
        \Aut_T(\xi)\cong G(T)\times H(T)\arrow[r] & G(T)\times G(T)\arrow[r] & G(T)\cong\Aut_T(\psi(\xi)),\\
        (v,u)\arrow[r,mapsto] & (v,\phi(u))\arrow[r,mapsto] & v+\phi(u).
    \end{tikzcd}
    \end{equation}
     In particular, the composite of $H(T)\rightarrow G(T)\times H(T)$, $u\mapsto(-\phi(u),u)$ with the above map is trivial. By~\cite[Theorem 5.1.5]{ACV03}, we get the desired morphism $\overline{\psi}$.
    
    Moreover, the composite of the inclusion $G(T)\hookrightarrow G(T)\times H(T)$, $v\mapsto (v,0)$ with the map in~\eqref{eq: psi on automorphisms} is the identity, showing that $\overline{\psi}$ is $G$-2-equivariant. Then, one concludes by noticing that a $G$-2-equivariant morphism of $G$-gerbes over $\cY$ is always an isomorphism.
\end{proof}

There is another natural way to obtain a $G$-gerbe from an $H$-gerbe 
$\cX \to \cY$ and a morphism $\phi \colon H \to G$: push forward the class 
$[\cX \to \cY] \in \mathrm{H}^2_{\mathrm{fppf}}(\cY,H)$ along the induced map 
\[
\phi_* \colon \mathrm{H}^2_{\mathrm{fppf}}(\cY,H) \longrightarrow 
\mathrm{H}^2_{\mathrm{fppf}}(\cY,G),
\]
and then take the associated $G$-gerbe. 
The following lemma shows that these two constructions agree.

\begin{lemma}\label{lem: main new lemma gerbes}
    With the notation above, we have
    \[
        \phi_*([\cX\rightarrow\cY])=[(BG\times_{S}\cX)\rigid\!\! H\rightarrow\cY].
    \]
    In particular, given a commutative diagram as in~\eqref{eq: commutative diag gerbes}, we have
    \[
        \phi_*([\cX\rightarrow\cY])=[\cZ\rightarrow\cY].
    \]
\end{lemma}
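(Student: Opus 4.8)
The second assertion is a formal consequence of the first, so the plan is to concentrate on the equality $\phi_*([\cX\to\cY])=[(BG\times_{S}\cX)\rigid H\to\cY]$. Indeed, given a commutative diagram as in~\eqref{eq: commutative diag gerbes}, Lemma~\ref{lem: isom gerbes} furnishes an isomorphism of $G$-gerbes $(BG\times_{S}\cX)\rigid H\cong\cZ$ over $\cY$, whence their classes in $\mathrm{H}^2_{\mathrm{fppf}}(\cY,G)$ coincide and the ``in particular'' follows at once. The content is therefore to show that the rigidification construction realizes the coefficient pushforward $\phi_*$ at the level of gerbe classes.

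The approach I would take is a direct computation with the $2$-cocycle describing a banded gerbe. First recall that $\phi_*\colon\mathrm{H}^2_{\mathrm{fppf}}(\cY,H)\to\mathrm{H}^2_{\mathrm{fppf}}(\cY,G)$ is the change-of-band map, which on a trivializing cover is given by postcomposing the band cocycle with $\phi$. Concretely, I would choose an fppf cover $\{U_i\to\cY\}$ over which the $H$-gerbe $h\colon\cX\to\cY$ becomes trivial, pick objects $x_i\in\cX(U_i)$ with $\Aut_{\cX}(x_i)=H$, choose isomorphisms $\alpha_{ij}\colon x_j|_{U_{ij}}\xrightarrow{\sim}x_i|_{U_{ij}}$, and read off the representing cocycle $(h_{ijk})$ from $\alpha_{ij}\alpha_{jk}\alpha_{ki}=h_{ijk}\in H(U_{ijk})$, where $H$ sits inside the inertia via the gerbe structure. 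Then $\phi_*([\cX\to\cY])$ is represented by $(\phi(h_{ijk}))$.

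The heart of the argument is to compute the class of $(BG\times_{S}\cX)\rigid H$ over the same cover. Over $U_i$ I would take the object $\bar y_i$ obtained by pushing the pair $(\mathrm{triv},x_i)\in(BG\times_{S}\cX)(U_i)$, with $\mathrm{triv}$ the trivial $G$-torsor, to the rigidification, together with the isomorphisms $\bar\alpha_{ij}$ induced by $(\mathrm{id},\alpha_{ij})$. On triple overlaps this yields $\bar\alpha_{ij}\bar\alpha_{jk}\bar\alpha_{ki}$ equal to the image of the automorphism $(0,h_{ijk})$ of $(\mathrm{triv},x_i)$, living in the relevant inertia $G\times H$. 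The key point is to identify this image inside the residual $G$-inertia of the rigidification. As recorded in the proof of Lemma~\ref{lem: isom gerbes} and in~\eqref{eq: psi on automorphisms}, the $H$-$2$-structure embeds $u\mapsto(-\phi(u),u)$, so rigidifying quotients $G\times H$ by this copy of $H$, and the resulting $G$-inertia is $(G\times H)/\langle(-\phi(u),u):u\in H\rangle\cong G$ via $(w,u)\mapsto w+\phi(u)$, which is exactly the $G$-$2$-structure fixed there. Under this identification $(0,h_{ijk})\mapsto\phi(h_{ijk})$, so the representing cocycle of $(BG\times_{S}\cX)\rigid H$ is precisely $(\phi(h_{ijk}))$, and the two classes agree.

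I expect the main obstacle to be the careful bookkeeping through the rigidification: over the trivializing cover one must correctly identify the relevant automorphisms of $(\mathrm{triv},x_i)$ as $G\times H$, track the sign in the embedding $u\mapsto(-\phi(u),u)$ dictated by the action $u\cdot v=v-\phi(u)$, and check that the isomorphism $(G\times H)/\langle(-\phi(u),u)\rangle\cong G$ used to extract the cocycle is the one compatible with the $G$-$2$-structure of Lemma~\ref{lem: isom gerbes}. Since only the gluing data $(h_{ijk})$ carries the cohomology class, the entire computation is confined to tracking this single residual automorphism on triple overlaps, and the consistency of conventions with~\eqref{eq: psi on automorphisms} is what makes the signs come out correctly.
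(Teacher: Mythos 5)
Your proposal is correct, but it proves the main equality by a genuinely different route than the paper. The paper never touches Čech cocycles: it models both classes inside injective resolutions $0\to H\to I^\bullet$, $0\to G\to J^\bullet$ of fppf sheaves, using the explicit construction of the gerbe $\cX_\tau$ attached to a cocycle $\tau\in I^2$ (objects are lifts of $\tau$ to $I^1$, morphisms are sections of $I^0$). A map of resolutions extending $\phi$ then produces a canonical $H$-$2$-equivariant morphism $\cX_\tau\to\cX_\rho$ with $[\cX_\rho]=\phi_*[\cX]$ by construction, and Lemma~\ref{lem: isom gerbes} is invoked to identify $\cX_\rho$ with $(BG\times_S\cX)\rigid H$; so in the paper Lemma~\ref{lem: isom gerbes} carries the whole proof, whereas in your argument it only enters for the ``in particular'' (where both you and the paper use it identically). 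Your band-cocycle computation — extracting $(h_{ijk})$ from local objects and gluing isomorphisms, and checking that the residual inertia $(G\times H)/\langle(-\phi(u),u)\rangle\cong G$, $(w,u)\mapsto w+\phi(u)$, sends the obstruction $(0,h_{ijk})$ to $\phi(h_{ijk})$ — is correct, consistent with the sign conventions of~\eqref{eq: psi on automorphisms}, and has the virtue of making the mechanism completely explicit and of not relying on the injective-resolution model of gerbes. What it costs is foundational: on an algebraic stack with the fppf topology, a gerbe need not be presentable by a $2$-cocycle on an honest Čech cover, since the isomorphisms $\alpha_{ij}$ over $U_{ij}$ exist in general only after further refinement; making ``the representing cocycle $(h_{ijk})$'' precise requires hypercovers (or Giraud's nonabelian machinery), together with the compatibility of the resulting comparison map with the coefficient map $\phi_*$ on derived-functor cohomology. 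Your computation carries over verbatim to a hypercover, so this is a repairable technical point rather than a gap, but it is exactly the issue the paper's injective-resolution argument is designed to sidestep.
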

\begin{proof}
Let $0\rightarrow H\rightarrow I^{\bullet}$ and $0\rightarrow G\rightarrow J^{\bullet}$ be injective resolutions as abelian sheaves on the fppf site of $\cY$. Given a class $\alpha\in\mathrm{H}_{\text{fppf}}^2(\cY,H)$ and a representative $\tau\in I^2$, the associated $H$-gerbe $\cX_{\tau}$ is the stack whose objects are lifts $\gamma$ of $\tau$ to $I^1$, and an isomorphism between two objects $\gamma_1$ and $\gamma_2$ is a section $\theta$ of $I^0$ whose boundary is $\gamma_1-\gamma_2$. In particular, $H$ injects naturally in the automorphism groups of objects of $\cX_{\alpha}$ via $G\hookrightarrow I^0$. The isomorphism class of $\cX_{\tau}$ as an $H$-gerbe does not depend on the choice of $\tau$. The same holds for $G$. See~\cite[Exercise 6.4.44]{Alp25}.

By the standard properties of injective resolutions, the morphism $\phi$ extends to a morphism of complexes
\[
\begin{tikzcd}
    0\arrow[r] & H\arrow[r]\arrow[d,"\phi"] & I^0\arrow[r]\arrow[d,"\phi^0"] & I^1\arrow[r]\arrow[d,"\phi^1"] & I^2\arrow[r]\arrow[d,"\phi^2"] & \ldots\\
    0\arrow[r] & G\arrow[r] & J^0\arrow[r] & J^1\arrow[r] & J^2\arrow[r] & \ldots
\end{tikzcd}
\]
uniquely up to homotopy. Let $\beta=\phi_*(\alpha)$, and $\rho=\phi^2(\tau)$, thus $[\rho]=\beta$. Then, there is a natural morphism $\cX_{\tau}\rightarrow\cX_{\rho}$ sending an object $\gamma\in I^1$ to $\phi^1(\gamma)$ and a morphism $\theta\in I^0$ to $\phi^0(\theta)$. By the commutativity of the above diagram, this is a $H$-2-equivariant morphism with respect to $\phi$. By Lemma~\ref{lem: isom gerbes}, it follows that $\cX_{\tau}\cong(BG\times_{S}\cX)\rigid\!\! H$, as wanted.
\end{proof}

\bibliographystyle{amsalpha}
\bibliography{library}

$\,$\
\noindent

$\,$\
\noindent
\textsc{Department of Pure Mathematics {\it \&} Mathematical Statistics, 
University of Cambridge, Cambridge, UK}

\textit{e-mail address:} \href{mailto:au270@cam.ac.uk}{ac2758@cam.ac.uk}

$\,$\
\noindent

$\,$\
\noindent
\textsc{Department of Pure Mathematics, Brown University, 151 Thayer Street, Providence, RI 02912, USA}

\textit{e-mail address:} \href{mailto:alberto_landi@brown.edu}{alberto\_landi@brown.edu}

\end{document}